\definecolor{webgreen}{rgb}{0,0,0.6}
\definecolor{webbrown}{rgb}{.6,0,0}
\newtheorem{theorem}{Theorem}[section]
\newtheorem{conjecture}[theorem]{Conjecture}
\newtheorem{corollary}[theorem]{Corollary}
\newtheorem{definition}[theorem]{Definition}
\newtheorem{example}[theorem]{Example}
\newtheorem{lemma}[theorem]{Lemma}
\newtheorem{proposition}[theorem]{Proposition}
\newtheorem{remark}[theorem]{Remark}
\newtheorem{question}{Question}
\begin{document}

\title{Sparse subsets of the natural numbers and Euler's totient function}

\author{Mithun Kumar Das\textsuperscript{1}, Pramod Eyyunni\textsuperscript{2} \and Bhuwanesh Rao Patil\textsuperscript{3} }

\address{Harish-Chandra Research Institute, HBNI, Chhatnag Road, Jhunsi, Allahabad - 211019, Uttar Pradesh, India.\\
Email: \textsuperscript{1}das.mithun3@gmail.com, \textsuperscript{2}pramodeyy@gmail.com \and\\ \textsuperscript{3}bhuwanesh1989@gmail.com }

\maketitle


\begin{abstract}
In this article, we investigate sparse subsets of the natural numbers and study the sparseness of some sets 
associated to the Euler's totient function $\phi$ via the property of `Banach Density'.  
These sets related to the totient function are defined as follows: $V:=\phi(\mathbb{N})$ and $N_i:=\{N_i(m)\colon m\in V \}$ for $i = 1, 2, 3,$ 
where $N_1(m)=\max\{x\in \mathbb{N}\colon \phi(x)\leq m\}$,
$N_2(m)=\max(\phi^{-1}(m))$ and  $N_3(m)=\min(\phi^{-1}(m))$ for $ m\in V$.  Masser and Shiu call the elements of $N_1$ as 
`sparsely totient numbers' and construct an infinite family of these numbers. Here we construct several infinite families
of numbers in $N_2\setminus N_1$ and an infinite family of composite numbers in $N_3$. 
We also study (i) the ratio
$\frac{N_2(m)}{N_3(m)}$, which is linked to the Carmichael's conjecture, namely, $|\phi^{-1}(m)|\geq 2 ~\forall ~ m\in V$, and
(ii) arithmetic and geometric progressions in $N_2$ and $N_3$.

Finally, using the above sets associated to the totient function, we generate an infinite class of subsets of
$\mathbb{N}$, each with asymptotic density zero and containing arbitrarily long arithmetic progressions.

\end{abstract}

\keywords{Euler's function; Sparsely totient numbers; Banach density}


\section{Introduction}
Euler's totient function $\phi(n)$, which enumerates the number of positive integers which are co-prime to 
and less than or equal to $n$, is a classical arithmetical function. It is a well known fact that the
number of solutions to the equation $\phi(x) = m$ is finite for each $m \in \mathbb{N}$ ($\mathbb{N}$ is the set of positive integers).
It is natural, then, to ask the following questions: 
\begin{enumerate}[label=(\roman*)]
 \item For a given $m \in \mathbb{N}$, what is the largest integer $n$ such that $\phi(n) \leq m$?
 \item What are the largest and the smallest integers satisfying $\phi(x) = m$?
 \end{enumerate}
We denote the set $\{x : \phi(x) = m\}$ by $\phi^{-1}(m)$ and the image of $\phi$ by $V$, i.e.
$V = \{\phi(m) : m \in \mathbb{N}\}$. The elements of $V$ are called \textbf{totients}.
For $m \in V$, we define the following quantities with the above questions in view:
\begin{align*}
 N_1(m) &= \max\{x : \phi(x) \leq m\} \\
 N_2(m) &= \max \left(\phi^{-1}(m)\right) \\
 N_3(m) &= \min \left(\phi^{-1}(m)\right) \\
 N_i &= \{N_i(m) : m \in V\} \ \text{ for } i = 1,2,3.
\end{align*}
Note that $N_2(m), N_3(m)$ are defined only on $V$ whereas $N_1(m)$ can be defined on the whole of $\mathbb{N}$.
But this doesn't contribute any new elements to the image $N_1$ of $N_1(m)$, since $N_1(m) = N_1(m - 1)$ if $m \notin V$. Hence,
from here on, we study $N_1(m)$ only for $m \in V$. In 1986, Masser and Shiu \cite{masser} studied many properties of 
$N_1$ and called its elements as `sparsely totient numbers'. They gave the following criteria to find examples
of sparsely totient numbers.
\begin{proposition}[Masser-Shiu, \cite{masser}]\label{sparse_totient_element}
Let $(p_i)_{i=1}^{\infty}$ be the enumeration of the primes in ascending order. Suppose $k\geq 2$, $d\geq 1$, $l\geq 0$ satisfy
  conditions $d < p_{k+1}-1$ and
  $d (p_{k+l}-1) < (d+1)(p_{k}-1).$
Then $dp_1\cdots p_{k-1}p_{k+l}$ is a sparsely totient number.
\end{proposition}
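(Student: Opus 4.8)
The plan is to argue by contradiction, organised around the elementary fact that for any $y$ the ratio $y/\phi(y)=\prod_{p\mid y}p/(p-1)$ depends only on the radical of $y$ and, among all $y$ with a fixed number of distinct prime factors, is largest when $y$ is supported on an initial segment of primes. Put $n=d\,p_1\cdots p_{k-1}p_{k+l}$ and $M=\phi(n)\in V$. Since trivially $N_1(M)\ge n$, it suffices to show that no $x>n$ satisfies $\phi(x)\le M$, for then $N_1(M)=n$ and $n\in N_1$. Assume such an $x$ exists. Two observations confine the situation. First, $d<p_{k+1}-1<p_{k+1}$ forces every prime factor of $d$ — and hence of $d+1$ — into $\{p_1,\dots,p_k\}$; so $\operatorname{rad}(n)\subseteq\{p_1,\dots,p_k,p_{k+l}\}$, and since $p_1,\dots,p_{k-1},p_{k+l}$ all divide $n$ one obtains the clean bound $\phi(n)\le d(p_{k+l}-1)\prod_{i=1}^{k-1}(p_i-1)$. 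Second, taking $d\ge1$ in $d(p_{k+l}-1)<(d+1)(p_k-1)$ gives $p_{k+l}<2p_k$, whence $n<2p_{k+1}\prod_{i=1}^{k}p_i$.

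The next step is to narrow the multiplicative shape of $x$. From $\phi(x)\le\phi(n)$ and $x>n$ we have $x/\phi(x)>n/\phi(n)\ge\frac{p_{k+l}}{p_{k+l}-1}\prod_{i=1}^{k-1}\frac{p_i}{p_i-1}$, while $x/\phi(x)\le\prod_{j=1}^{\omega(x)}\frac{p_j}{p_j-1}$ because $t\mapsto t/(t-1)$ decreases and the $j$-th least prime factor of $x$ is at least $p_j$; comparing the two forces $\omega(x)\ge k$. Dually, $\phi(x)\ge\prod_{j=1}^{\omega(x)}(p_j-1)$ together with $\phi(x)\le\phi(n)<2p_{k+1}\prod_{i=1}^{k}p_i$ and Bertrand's postulate bounds $\omega(x)$ from above. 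Feeding these back, together with the size constraint $x>n$, reduces $x$ to a short list of shapes, each essentially an initial-primes product perturbed by at most one or two further primes $\ge p_k$: $x=d'\,p_1\cdots p_{k-1}q^{a}$ or $x=d'\,p_1\cdots p_{k}q^{a}$ with $q\ge p_k$ prime and $\operatorname{rad}(d')\mid p_1\cdots p_k$, plus a handful of ``small-prime-deficient'' configurations to be handled separately. In the principal sub-case $x=d'p_1\cdots p_k$ one has the convenient identity $\phi(x)=d'\prod_{i=1}^{k}(p_i-1)$.

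It then remains to refute each shape, and here the two hypotheses do the work. The inequality $d(p_{k+l}-1)<(d+1)(p_k-1)$ defeats the competitor obtained from $n$ by collapsing the large prime $p_{k+l}$ to $p_k$ and compensating by $d\mapsto d+1$ — morally $x=(d+1)p_1\cdots p_k$ — since $\operatorname{rad}(x)\mid p_1\cdots p_k$ gives $\phi(x)\ge(d+1)\prod_{i=1}^{k}(p_i-1)>d(p_{k+l}-1)\prod_{i=1}^{k-1}(p_i-1)\ge\phi(n)$; more generally, for $x=d'p_1\cdots p_k$ with $x>n$ the same hypothesis, in the sharpened form $d(p_{k+l}-p_k)\le p_k-2$, is exactly what keeps the half-open interval of admissible $d'$ free of integers, so no such $x$ can beat $n$. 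The hypothesis $d<p_{k+1}-1$ disposes of the remaining borderline competitors — for instance $p_1^{2}p_2\cdots p_{k-1}p_kp_{k+l}$, which would tie $n$ in totient exactly when $d=2p_k-1$ — because $p_{k+1}-1<2p_k-1$ by Bertrand's postulate, excluding that value of $d$. Each configuration surviving the previous step reduces, via $x>n$ and $p_{k+l}<2p_k$, to one of these two inequalities.

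The crux of the difficulty is not any single estimate but the bookkeeping in the last step: one must treat together the exponents on $p_1,\dots,p_{k-1}$, the one or two stray primes $\ge p_k$, and a sub-case split according to whether $p_k\mid d$ (which changes the exact value of $\phi(n)$); and since several configurations give $\phi(x)=\phi(n)$ exactly, in those one must still verify $x\le n$, so that equality in the totient does not yield a larger solution. Organising these finitely many cases cleanly — rather than the inequalities themselves — is where the effort goes.
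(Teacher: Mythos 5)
You should first note that the paper does not prove this statement at all: it is quoted from Masser--Shiu, so there is no in-paper argument to compare with, and your proposal has to stand on its own. Its opening moves are sound: the contrapositive formulation, the bound $\phi(n)\le d(p_{k+l}-1)\prod_{i<k}(p_i-1)$, the deduction $\omega(x)\ge k$ for a competitor $x>n$ with $\phi(x)\le\phi(n)$, and the two model refutations (for $x=d'p_1\cdots p_k$ one gets $d'(p_k-1)\le d(p_{k+l}-1)<(d+1)(p_k-1)$, hence $d'\le d$ and $x\le n$; for $x=d'p_1\cdots p_{k-1}q^a$, $q\ge p_{k+1}$, both hypotheses force $a=1$ and the cofactor $\le d$). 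Incidentally, your upper bound on $\omega(x)$ is both vaguer and weaker than needed: ``Bertrand bounds $\omega(x)$'' does not give a clean statement (the comparison involves $\prod_{i\le k}p_i/(p_i-1)$, which grows like $\log p_k$), whereas the two hypotheses give directly $\phi(x)\le\phi(n)<(d+1)(p_k-1)\prod_{i<k}(p_i-1)\le\prod_{i=1}^{k+1}(p_i-1)$, hence $\omega(x)\le k$, which considerably streamlines the case analysis.

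The genuine gap is the step you defer as ``bookkeeping''. The reduction of a competitor to ``a short list of shapes \ldots plus a handful of small-prime-deficient configurations to be handled separately'' is precisely the content of the theorem, and those configurations are neither a handful nor handled: with $\omega(x)=k$ they are $x=c\,p_1\cdots p_{j_0-1}\,q_{j_0}\cdots q_k$ with $q_j\ge p_{j+1}$ for $j\ge j_0$ and $q_k<p_{k+l}$, i.e.\ $x$ drops one or more of $p_{j_0},\dots,p_{k-1}$ and replaces them by primes from $[p_k,p_{k+l})$. For this family the natural estimate you rely on elsewhere --- use $\phi(x)\le\phi(n)$, replace each $q_j$ by the least prime it could be, and compare with $n$ --- reduces to the inequality $\frac{p_{k+l}-1}{p_{k+l}}\cdot\frac{p_{j_0}-1}{p_{j_0}}\cdot\frac{p_k}{p_k-1}\cdot\frac{p_{k+1}}{p_{k+1}-1}\le 1$, which is \emph{not} a consequence of the two hypotheses (take $d=1$, $p_{k-1},p_k$ a twin-prime pair and $p_{k+l}$ close to $2p_k$, which the hypotheses allow). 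So here one must genuinely exploit the integrality and primality of $c$ and the $q_j$ --- e.g.\ for $x=c\,p_1\cdots p_{k-2}qq'$ with $p_k\le q<q'<p_{k+l}$, compare $qq'-(q+q')$ with $p_{k-1}p_{k+l}-(p_{k-1}+p_{k+l})$ to rule out $\phi(x)\le\phi(n)$ together with $x>n$ --- and nothing in your outline says how; the assertion that every surviving configuration ``reduces, via $x>n$ and $p_{k+l}<2p_k$, to one of these two inequalities'' is exactly what needs proof. (Two small slips besides: the shape $d'p_1\cdots p_kq^a$ with $q>p_k$ has $\omega=k+1$ and is already excluded by the sharpened bound above, and the tie for the competitor $p_1^2p_2\cdots p_kp_{k+l}$ occurs at $d=2p_k-2$, not $2p_k-1$.) As it stands the proposal is a plausible outline of the Masser--Shiu strategy, not a proof.
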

 They also found some nice patterns among sparsely totient numbers.
 \begin{proposition}[Masser-Shiu, \cite{masser}] \label{sparse_totient_property}
 For $n\in N_1$, let $n'$ represent the smallest sparsely totient number greater than $n$. Then
  \begin{enumerate}[label=(\roman*)]
   \item \label{sparse_totient_property1}  $\frac{n'}{n}\rightarrow 1  \text{ as }  n\rightarrow \infty$ and $n\in N_1$.
   \item \label{sparse_totient_property2}For a given prime $p$, $\exists  \ m(p)\in \mathbb{N}$ such that $m)\equiv 0\pmod{p}$
  for all $m\geq m(p)$. 
  \end{enumerate}
\end{proposition}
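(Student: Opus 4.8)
We outline the argument, following \cite{masser}: both parts are derived from Proposition~\ref{sparse_totient_element} together with two elementary inputs on primes, namely Bertrand's postulate and the relation $p_{k+1}/p_k\to 1$ (a consequence of the prime number theorem). Write $P_k=p_1p_2\cdots p_k$ for the $k$-th primorial, and recall the reformulation of sparseness used throughout: $n\in N_1$ if and only if $\phi(x)>\phi(n)$ for every integer $x>n$.

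For part~\ref{sparse_totient_property1} it suffices to show that $N_1$ is \emph{multiplicatively dense}, i.e.\ that for every $\varepsilon>0$ there is $X_0$ so that every interval $[X,(1+\varepsilon)X]$ with $X\ge X_0$ contains an element of $N_1$: granting this, a large $n\in N_1$ has some $s\in N_1$ with $n<s\le(1+\varepsilon)n$, and the successor $n'$ of $n$ in $N_1$ then satisfies $n'\le s\le(1+\varepsilon)n$. To produce the required elements I would fix a large $k$ and, for each $d$ with $1\le d\le p_{k+1}-2$, use Proposition~\ref{sparse_totient_element} to obtain $d\,P_k\in N_1$ (its case $l=0$) together with
\[
  d\,p_1p_2\cdots p_{k-1}\,p_{k+l}\in N_1\qquad\text{whenever}\qquad d\,(p_{k+l}-1)<(d+1)(p_k-1).
\]
For fixed $d$ the latter form a chain running from $\approx d\,P_k$ (at $l=1$) up to $\approx(d+1)P_k$ with consecutive ratios $p_{k+l+1}/p_{k+l}=1+o(1)$. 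Splicing these chains, the points $d\,P_k$, and $P_k$ and $P_{k+1}=p_{k+1}P_k\in N_1$ at the ends (the last join, $(p_{k+1}-2)P_k$ to $P_{k+1}$, has ratio $p_{k+1}/(p_{k+1}-2)=1+o(1)$) gives an increasing sequence through $N_1\cap[P_k,P_{k+1}]$ all of whose consecutive ratios are $1+o(1)$; as $\bigcup_k[P_k,P_{k+1}]=[2,\infty)$ and the error terms are controlled by prime gaps uniformly for large $k$, multiplicative density follows.

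For part~\ref{sparse_totient_property2}, fix a prime $p$, suppose $n\in N_1$ with $p\nmid n$, and try to bound $n$. If every prime power dividing $n$ is at most a constant $C(p)$ then $n$ is bounded, so assume otherwise and let $q^{b}\,\|\,n$ be the largest prime-power divisor of $n$. The idea is an exchange move: I would locate a prime $p'\nmid n$ with
\[
  \frac{q^{b}}{p}<p'\le 1+\frac{\phi(q^{b})}{p-1}
\]
and replace the block $q^{b}$ of $n$ by $p\,p'$; then $m:=(n/q^{b})\,p\,p'$ satisfies $m>n$ (left inequality) and $\phi(m)\le\phi(n)$ (right inequality, as $\phi(pp')=(p-1)(p'-1)\le\phi(q^{b})$), contradicting $n\in N_1$. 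The admissible interval for $p'$ has multiplicative length $\asymp_p 1$, hence contains $\gg_p q^{b}/\log q^{b}$ primes; since $q^{b}$ is the \emph{largest} prime power dividing $n$, each prime in the interval divides $n$ at most once, so at most $O(\log n/\log q^{b})$ of them divide $n$, and a usable $p'$ exists---yielding the contradiction---once $q^{b}>C'(p)\log n$. This rules out the regime $q^{b}>C'(p)\log n$.

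Part~\ref{sparse_totient_property1} then amounts to routine bookkeeping, needing only $p_{k+1}/p_k\to 1$. The real obstacle is the complementary regime of part~\ref{sparse_totient_property2}, $q^{b}\le C'(p)\log n$, where the largest prime power of $n$ is only moderate compared with $\log n$ and the crude exchange is too weak; there $n$ is essentially forced to agree with a primorial $\prod_{r\le y}r$, and one must appeal to the finer structural analysis of sparsely totient numbers in \cite{masser} to conclude that such an $n$ can omit a prime as small as $p$ only for $n$ bounded. That last regime is the step I expect to require the most care, and the value $m(p)$ in the statement emerges from the resulting explicit bound.
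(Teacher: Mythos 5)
The paper itself offers no proof of this proposition: it is imported verbatim from Masser and Shiu \cite{masser}, so your reconstruction has to stand on its own. For part \ref{sparse_totient_property1} your outline is sound and is essentially how the statement follows from the construction: taking $dP_k\in N_1$ (the case $l=0$ of Proposition \ref{sparse_totient_element}) for $1\le d\le p_{k+1}-2$, filling the multiplicative gap between $dP_k$ and $(d+1)P_k$ with the numbers $dp_1\cdots p_{k-1}p_{k+l}$, and using $p_{j+1}/p_j\to 1$ to control all consecutive ratios, gives multiplicative density of $N_1$ and hence $n'/n\to 1$; only routine uniformity bookkeeping over $d$ and $l$ is missing there.

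Part \ref{sparse_totient_property2} has a genuine gap, and it is worse than the one you flag. Your exchange move needs a prime $p'$ in $\left(\frac{q^{b}}{p},\,1+\frac{\phi(q^{b})}{p-1}\right]$, and the ratio of the endpoints tends to $\frac{p(q-1)}{q(p-1)}$, which exceeds $1$ if and only if $q>p$. So the assertion that this interval has multiplicative length $\asymp_p 1$ is false whenever the largest prime-power divisor of $n$ is a power of a prime $q\le p$ (e.g.\ a huge power of $2$ or $3$ when $p\ge 3$): the interval is then empty and the move yields nothing, so even the regime $q^{b}\gg_p\log n$ is not actually disposed of. More seriously, the complementary regime --- $n$ built from small prime powers, i.e.\ primorial-like, which is exactly what sparsely totient numbers look like --- is the heart of the theorem, and you settle it by appealing to ``the finer structural analysis of sparsely totient numbers in \cite{masser}'', that is, to the very source whose theorem is being proved; that is circular, not a proof. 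Note also that the obvious repair in that regime (replace the largest prime factor $P$ of $n$ by $pp'$ with $P/p<p'\le 1+\frac{P-1}{p-1}$) fails for $p\ge 3$, since that interval lies entirely below $P$ and every prime in it may already divide $n$. As it stands, then, your proposal proves (i) and only the easy portion of (ii); the crucial case still requires Masser--Shiu's actual machinery (their bounds on the largest and smallest prime factors of sparsely totient numbers) or some genuinely new argument.
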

This proposition suggests that the distribution of  elements of $N_1$ may be very sparse. To study the notion of sparseness of a
subset of integers, we use properties like asymptotic density or Banach density. Asymptotic density gives the fraction of the number of elements of a set in $\mathbb{N}$ whereas
Banach density gives an idea about how locally sparse or dense a set is. For example, the set
$\cup_{n \in \mathbb{N}}[10^n, 10^n + n]$ has asymptotic density zero but it has, in fact,
maximum Banach density of 1. The notion of Banach density will be defined in Section \ref{section2}. The first theorem in this paper measures the densities of sets $N_1,N_2,N_3$ etc.
\begin{theorem}\label{t1}\hspace{0.3cm}
 \begin{enumerate} [ label=(\roman*)]
\item The Banach density of  $N_1$ is zero. 
\item If $f\colon V \rightarrow \mathbb{N}$ is such that $f(m)\in \phi^{-1}(m)$, then the asymptotic density of $f(V)$ is $0$. 
In particular, the asymptotic density of $N_2$ and $N_3$ is zero.
\end{enumerate}
\end{theorem}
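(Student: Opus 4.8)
For part (i) the plan is to use the second assertion of Proposition~\ref{sparse_totient_property} (eventual divisibility by any fixed prime) in an essential way. Fix $\varepsilon>0$ and pick $r$ so large that the primorial $P:=p_1p_2\cdots p_r$ exceeds $1/\varepsilon$. Applying that assertion to each of $p_1,\dots,p_r$ produces a threshold $K=K(\varepsilon)$ such that every $n\in N_1$ with $n\geq K$ is divisible by each $p_i$, hence by their product $P$. Consequently, for any block $I$ of $L$ consecutive integers, the members of $N_1\cap I$ that are $\geq K$ all lie among the at most $L/P+1$ multiples of $P$ contained in $I$, while those below $K$ number at most $C_0:=|N_1\cap[1,K]|$. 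This yields $|N_1\cap I|\leq C_0+L/P+1$, uniformly in the position of $I$; dividing by $L$, taking the supremum over $I$, and letting $L\to\infty$ bounds the Banach density of $N_1$ by $1/P<\varepsilon$. Letting $\varepsilon\to 0$ finishes part (i).

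For part (ii) the first step is to observe that $\phi$ is injective on $f(V)$: if $a=f(m_1)$ and $b=f(m_2)$ lie in $f(V)$ and $\phi(a)=\phi(b)$, then $m_1=\phi(f(m_1))=\phi(a)=\phi(b)=\phi(f(m_2))=m_2$, so $a=b$. Since moreover $\phi(n)\leq n$ for every $n\in\mathbb{N}$, the function $\phi$ maps $f(V)\cap[1,X]$ injectively into $V\cap[1,X]$, whence
\[
|f(V)\cap[1,X]|\;\leq\;|V\cap[1,X]|\qquad\text{for every }X\geq 1 .
\]
It then suffices to invoke the classical theorem of Erd\H{o}s that $V$ has asymptotic density zero, i.e.\ $|V\cap[1,X]|=o(X)$. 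If a self-contained proof of this is wanted, I would run the usual argument: $\phi(n)\leq X$ forces $n\ll X\log\log X=:Y$ (use $\phi(n)>\sqrt{n/2}$ to get $n\le 2X^2$ first, then the Rosser--Schoenfeld bound $n/\phi(n)<e^\gamma\log\log n+3$); any $n\leq Y$ with at least $\tfrac12\log\log Y$ distinct odd prime factors has $2^{\lfloor(\log\log Y)/2\rfloor}\mid\phi(n)$, which restricts $\phi(n)$ to $O\!\left(X/(\log X)^{(\log 2)/2}\right)=o(X)$ possible values; and the $n\leq Y$ with fewer distinct prime factors are $o(X)$ in number by the Hardy--Ramanujan/Landau estimates for $\omega(n)$. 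Finally, specializing $f$ to $N_2(\cdot)$ and to $N_3(\cdot)$ gives the ``in particular'' statement.

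The genuinely nontrivial point is in part (i). Asymptotic density zero for $N_1$ is cheap: $\phi$ is injective on $N_1$ as well (if $n_1<n_2$ both lie in $N_1$, then since $n_1=N_1(\phi(n_1))$ one has $\phi(n_2)>\phi(n_1)$), so the argument of part (ii) already gives $|N_1\cap[1,X]|\leq|V\cap[1,X]|=o(X)$. But the example $\bigcup_{n}[10^{n},10^{n}+n]$ recalled in the introduction shows that asymptotic density zero does not force Banach density zero, so one genuinely needs a feature of $N_1$ that holds uniformly across all windows — and that is precisely what the divisibility statement in Proposition~\ref{sparse_totient_property} supplies. The only care required in carrying this out is to peel off the finite initial segment $N_1\cap[1,K(\varepsilon)]$ before comparing the rest of $N_1$ with the arithmetic progression of multiples of the primorial $P$. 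Part (ii), by contrast, is bookkeeping once the injectivity of $\phi$ on $f(V)$ is noted, the whole weight being carried by the classical density-zero theorem for totients.
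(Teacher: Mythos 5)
Your proof is correct and follows essentially the same route as the paper: part (i) rests on Masser--Shiu's divisibility property of $N_1(m)$ (you use a primorial modulus and a uniform count over all windows, where the paper fixes one prime $p$, splits into bounded/divergent left endpoints of the interval F\o{}lner sequence, and lets $p\to\infty$, but the idea is identical), and part (ii) rests on the injectivity of $\phi$ on $f(V)$ together with $\phi(n)\leq n$, reducing everything to the density-zero property of the totient set $V$ (which the paper imports via Ford's quantitative estimate and you via the classical Erd\H{o}s--Pillai result). No gaps; the only implicit step, passing from ``$N_1(m)\equiv 0\pmod p$ for $m\geq m(p)$'' to ``every $n\in N_1$ with $n$ large is divisible by $p$,'' follows from the monotonicity of $N_1(\cdot)$, exactly as in the paper.
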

More generally, we also look at the Banach density of sets that are images of injective-increasing functions on $\mathbb{N}$.

\begin{theorem}\label{injective_increasing-Banach}
 Let $A \subset \mathbb{N}$. Suppose $f\colon A\rightarrow \mathbb{N}$ is an injective and  increasing function. 
 \begin{enumerate}[label=(\alph*)]
  \item \label{banach_th1a} If the function $\displaystyle\frac{f(n)}{n}$ is increasing on $A$ and $\displaystyle\lim_{\substack{n\rightarrow \infty \\ n \in A}}\displaystyle\frac{f(n)}{n} = \infty$, then the Banach density of $f(A)$ is zero.
  \item \label{banach_th2a} For $A=\mathbb{N}$, if there exists $n_0\in \mathbb{N}$ and positive absolute constants $c_1$ and $c_2$ such that $c_1n\leq f(n)\leq c_2n$ for $n\geq n_0$, then  the Banach density of $f(\mathbb{N})$ is positive.
 \end{enumerate}
\end{theorem}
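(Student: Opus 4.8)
The plan is to argue directly from the definition of the (upper) Banach density $d^*(B)=\lim_{N\to\infty}\sup_{k\ge 0}\frac1N\bigl|B\cap(k,k+N]\bigr|$, the limit existing by subadditivity in $N$. For \ref{banach_th1a} it suffices to show that for every $\varepsilon>0$ some $N$ makes $\bigl|f(A)\cap(k,k+N]\bigr|<\varepsilon N$ hold for all $k$; for \ref{banach_th2a} it suffices to exhibit, for each large $N$, a single window $(k,k+N]$ containing at least $cN$ points of $f(\mathbb N)$ with $c>0$ fixed.

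For \ref{banach_th1a}, I would first discard the trivial case where $A$ is finite (then $f(A)$ is finite, so $d^*(f(A))=0$), and otherwise write $g(n)=f(n)/n$. The mechanism I would exploit is: if $n_1<n_2$ both lie in $A$ with $f(n_1),f(n_2)\in(k,k+N]$, then on one hand $f(n_2)-f(n_1)\le N-1<N$ since $f$ is strictly increasing, and on the other hand, since $g$ is increasing,
\[
f(n_2)-f(n_1)=n_2g(n_2)-n_1g(n_1)\ \ge\ (n_2-n_1)\,g(n_1),
\]
so $n_2-n_1<N/g(n_1)$. Listing all elements of $A$ with $f$-value in $(k,k+N]$ as $n_1<\cdots<n_r$ then gives $r\le n_r-n_1+1<N/g(n_1)+1$, which is useful once $g(n_1)$ is bounded below. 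So, given $\varepsilon>0$, I would use $g(n)\to\infty$ to choose $T$ with $g(n)>2/\varepsilon$ for every $n\in A$ with $n\ge T$, and split the $n_i$ into those $\ge T$ — a terminal block $n_j,\dots,n_r$, which by the displayed bound applied to $n_j<n_r$ has fewer than $\varepsilon N/2+1$ elements — and those $<T$, of which there are at most $|A\cap[1,T)|<T$. This yields $r<T+\varepsilon N/2$ uniformly in $k$, so for $N>2T/\varepsilon$ we get $\bigl|f(A)\cap(k,k+N]\bigr|<\varepsilon N$ for all $k$; letting $N\to\infty$ and then $\varepsilon\to0$ finishes part \ref{banach_th1a}.

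For \ref{banach_th2a}, the point I would stress is that an arbitrarily located window of length $N$ may contain no point of $f(\mathbb N)$ at all (if $f$ grows much faster than linearly on the relevant range), so the window must be chosen rather than arbitrary — and anchoring it at a fixed small argument works. Concretely, for large $N$ I would take the window $[\,f(n_0),\,f(n_0)+N-1\,]=(f(n_0)-1,\ f(n_0)-1+N]$: every integer $m$ with $n_0\le m\le(f(n_0)+N-1)/c_2$ satisfies $f(m)\le c_2m\le f(n_0)+N-1$ and $f(m)\ge f(n_0)$, so $f(m)$ lies in this window, and these values are distinct by injectivity. Counting them gives, once $N$ is large enough that $(f(n_0)+N-1)/c_2\ge n_0$,
\[
\bigl|f(\mathbb N)\cap[f(n_0),f(n_0)+N-1]\bigr|\ \ge\ \Bigl\lfloor\tfrac{f(n_0)+N-1}{c_2}\Bigr\rfloor-n_0+1\ \ge\ \frac{N}{c_2}-C
\]
for a constant $C=C(c_2,n_0)$, whence $\sup_k\frac1N\bigl|f(\mathbb N)\cap(k,k+N]\bigr|\ge\frac1{c_2}-\frac CN$ for all large $N$, and so $d^*(f(\mathbb N))\ge1/c_2>0$. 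Note that only the upper bound $f(n)\le c_2n$ is used here; the hypothesis $c_1n\le f(n)$ merely guarantees that $f(\mathbb N)$ is infinite and would be what one needs for a complementary \emph{upper} bound on the density.

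The numerical estimates are routine. The one genuine idea in \ref{banach_th1a} is the two-part split of a window — a bounded number of small arguments together with a terminal block whose length shrinks because $g\to\infty$ — and in \ref{banach_th2a} it is the choice of an anchored window. The most error-prone spot I anticipate is the bookkeeping in the \ref{banach_th1a} split (an empty terminal block, or $T$ exceeding everything relevant to a given window), but none of it is deep.
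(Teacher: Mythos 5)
Your proof is correct, and while part \ref{banach_th1a} runs on the same engine as the paper's, part \ref{banach_th2a} takes a genuinely different and simpler route. In \ref{banach_th1a} both arguments exploit monotonicity of $f(n)/n$ to convert the bound $f(n_2)-f(n_1)<N$ inside a length-$N$ window into a bound on $n_2-n_1$, and then let $f(n)/n\to\infty$ kill the count; the difference is bookkeeping. You make the estimate uniform in the window's location by splitting at a threshold $T$ (at most $T$ arguments below $T$, plus a terminal block controlled by $g(n_j)>2/\varepsilon$), whereas the paper shrinks each Følner interval $(a_n,b_n]$ to $[a_n',b_n']$ with endpoints in $f(A)$, bounds the count by $f^{-1}(b_n')-f^{-1}(a_n')\le (b_n'-a_n')\,f^{-1}(b_n')/b_n'$, and then must argue separately that $b_n'\to\infty$; your threshold split sidesteps that step and automatically handles windows meeting $f(A)$ in at most one point. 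Your interval formulation $\lim_N\sup_k$ of Banach density is legitimate in this context: the paper's quoted proposition of Beiglböck--Bergelson--Fish reduces the supremum over Følner sequences to shifted intervals, and your bound is uniform over all intervals of length $N$. In \ref{banach_th2a} the paper works with the Følner sequence $(n,rn]$ for $r$ chosen with $c_2<(r-1)c_1$, obtaining the lower bound $1/(c_1(r-1)^2)$; you instead anchor the window at $f(n_0)$ and count the values $f(m)$ with $n_0\le m\le (f(n_0)+N-1)/c_2$, getting $d^*(f(\mathbb{N}))\ge 1/c_2$. This is in effect a positive lower bound on the (lower) asymptotic density near the origin, it uses only the hypothesis $f(n)\le c_2 n$ (as you correctly observe), and it yields a cleaner constant; what the paper's choice buys instead is positivity along windows of the shape $(n,rn]$ drifting to infinity, at the cost of invoking both constants $c_1$ and $c_2$.
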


In \ref{banach_th1a} above, the hypothesis `increasing' for $\frac{f(n)}{n}$ is only a sufficient condition. For instance, if $ BN_1 = \{m \in V : N_1(m) = N_2(m)\},$
then the function $h \colon BN_1 \rightarrow N_1$ given by $h(m)= N_1(m)$ doesn't satisfy this condition but nevertheless, the  Banach density of $N_1$ is zero .

In Section \ref{section3}, we observe that $N_2\supset N_1$ and $N_3\supset \mathbb{P}\setminus \{2\}$ where $\mathbb{P}$
denotes the set of primes. Therefore we look for infinite families of elements in $N_2\setminus N_1$ 
and an infinite family of composite numbers in $N_3$. This leads to our next theorem:

For $r, r_1, r_2 \in \mathbb{N}$ and a prime $q \equiv 3 \pmod{4}$, define
\begin{align*}
R(r_1, r_2) &:= 2.3^{r_1}.5^{r_2}, \ K_{q, r} := 2q^{r+1}, \ k_{q,r} := \begin{cases} q^{r}(q-1)+1 &\mbox{if } q^{r}(q-1)+1 \in \mathbb{P}\\
  q^{r+1} &\mbox{otherwise. } \end{cases}
\end{align*}

 A prime of the form $2^{2^l}+1$ with $l\in \mathbb{N}\cup \{0\}$ is called a Fermat prime. We denote the $j$th Fermat prime by 
 $F_j$. The only known Fermat primes are $$ F_1=3, \ F_2=5, \ F_3=17, \  F_4=257 \text{ and } F_5=65537. $$ Existence of $F_6$
 is not known.

\begin{theorem} \label{t3}
$\mathcal{K}_{\max}, \mathcal{R}$ and $\mathcal{F}$ are infinite subsets of $N_2$ in which only finitely many elements are in $N_1$. 
Moreover, $\mathcal{K}_{\min}$ is an infinite subset of $N_3$ in which infinitely many elements are composite. Here,
\begin{align*}
  \mathcal{K}_{\max}&=\{K_{q,r}\colon q\equiv 3\pmod{4} , r\in\mathbb{N}\},  \\
  \mathcal{K}_{\min}&=\{k_{q,r}\colon q\equiv 3\pmod{4} , r\in\mathbb{N}\},\\
  \mathcal{R}&=\{R(r_1,r_2)\colon r_1, r_2\in\mathbb{N}, r_2>2\},\\
  \mathcal{F}&=\left\{2^a\prod_{i=1}^{k} F_i\colon k\in H;\ a\leq \log_2(F_{k+1}-1) \text{ if $F_{k+1}$  exists and } a\in\mathbb{N} \text{ otherwise} \right\},
\end{align*}
  where $F_j$ denotes the $j$th Fermat prime and $H=\left\{k \in \mathbb{N}\colon F_k \text{ exists}\right\}.$ 
\end{theorem}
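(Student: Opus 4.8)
The plan is to handle all four families by a common three-step routine: for a generic member $n$, first compute $m:=\phi(n)$; then describe $\phi^{-1}(m)$, or enough of it, to show $n=N_2(m)$ (respectively $n=N_3(m)$); and finally invoke the Masser--Shiu divisibility property (Proposition~\ref{sparse_totient_property}\ref{sparse_totient_property2}) to conclude that only finitely many members lie in $N_1$. Two elementary observations power the middle step. Since $\phi(x)=x\prod_{p\mid x}(1-1/p)$, every $x\in\phi^{-1}(m)$ satisfies $x=m\prod_{p\mid x}\frac{p}{p-1}$; hence $\max\phi^{-1}(m)$ is attained by the $x$ whose set of prime divisors maximizes the ``density'' $\prod_{p}\frac{p}{p-1}$, and computing $N_2(m)$ reduces to maximizing this density over the prime sets that can actually occur. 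Moreover, each odd prime dividing $x$ contributes at least $1$ to $v_2(\phi(x))$, so $x$ has at most $v_2(m)$ distinct odd prime divisors. For $N_3(m)$ one instead lists $\phi^{-1}(m)$ outright and takes the minimum.

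For $\mathcal{K}_{\max}$ and $\mathcal{K}_{\min}$, fix $q\equiv 3\pmod 4$ and $m=q^r(q-1)$, so $v_2(m)=1$ and every $x\in\phi^{-1}(m)$ has exactly one odd prime factor (it is not a power of $2$ as $q\mid m$). Splitting on whether that prime is $q$ or a prime $p\equiv 1\pmod{q^r}$, and noting that no higher power of such $p$ divides $x$, I expect $\phi^{-1}(q^r(q-1))=\{q^{r+1},2q^{r+1}\}$, enlarged by $\{p,2p\}$ exactly when $p:=q^r(q-1)+1$ is prime. The maximum is then $2q^{r+1}=K_{q,r}$, and since $q^r(q-1)+1<q^{r+1}$ the minimum is $k_{q,r}$; thus $\mathcal{K}_{\max}\subset N_2$ and $\mathcal{K}_{\min}\subset N_3$. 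Both sets are infinite by Dirichlet's theorem on primes $q\equiv 3\pmod 4$, and $\mathcal{K}_{\min}$ has infinitely many composite members: with $r=1$ and $q\equiv 3\pmod{28}$ prime, $q>7$, one has $7\mid q^2-q+1$, so $k_{q,1}=q^2$.

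For $\mathcal{F}$, the point is that $F-1$ is a power of $2$ for every Fermat prime $F$, so $\phi\!\left(2^a\prod_{i=1}^k F_i\right)$ is a power of $2$; conversely $\phi(x)=2^t$ forces $x=2^b\prod_{i\in S}F_i$ for a set $S$ of Fermat primes, with $t=\max(b-1,0)+\sum_{i\in S}\log_2(F_i-1)$. Using $\log_2(F_i-1)$ units of this ``budget'' $t$ on $F_i$ multiplies $x$ by $F_i=2^{\log_2(F_i-1)}+1$, strictly more than the factor $2^{\log_2(F_i-1)}$ obtained by spending the same units on powers of $2$; so the maximum of $x$ is realized by absorbing as many Fermat primes as the budget permits --- that is, the cheapest available ones $F_1,\dots,F_k$ --- and devoting the remainder to a power of $2$. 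The hypothesis $a\le\log_2(F_{k+1}-1)$ is precisely the statement that after $F_1,\dots,F_k$ the leftover budget cannot accommodate $F_{k+1}$; hence $2^a\prod_{i=1}^k F_i=N_2\!\left(\phi\!\left(2^a\prod_{i=1}^k F_i\right)\right)$, so $\mathcal{F}\subset N_2$. The family is infinite: if $H$ is finite then $a$ is unrestricted for $k=\max H$, and if $H$ is infinite then distinct $k$ already give distinct members.

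The most delicate case is $\mathcal{R}$, where $m=2^3\,3^{r_1-1}5^{r_2-1}$, so any $x\in\phi^{-1}(m)$ has at most three distinct odd primes, and every odd prime $p\mid x$ outside $\{3,5\}$ has $p-1\mid m$ with $v_2(p-1)\le 3$. By the density principle I would show that, over all prime sets $P$ for which some $x$ with prime-divisor set $P$ has $\phi(x)=m$, the density $\prod_{p\in P}\frac{p}{p-1}$ is largest exactly at $P=\{2,3,5\}$, where it equals $\tfrac{15}{4}$ and forces $x=\tfrac{15}{4}m=R(r_1,r_2)$ (which indeed lies in $\phi^{-1}(m)$). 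If $\{2,3,5\}\subseteq P$, then $P=\{2,3,5\}$, since a further odd prime would make $v_2(\phi(x))\ge 4$. If $P$ omits one of $2,3,5$, a short enumeration of the prime sets with at most three odd primes and density exceeding $\tfrac{15}{4}$ leaves only $\{2,3,7,11\}$ and $\{2,3,7,13\}$; but any $x$ with either of these prime sets has $v_5(\phi(x))\le 1<r_2-1$, contradicting $r_2>2$. (This is exactly where the hypothesis $r_2>2$ enters; for $r_2=1$ and $r_2=2$ the family $\mathcal{R}$ actually fails, with $\{2,3,7,13\}$ and $\{2,3,7,11\}$ respectively supplying larger preimages.) Finally, for the $N_1$ statements: the members of $\mathcal{R}$ and of $\mathcal{F}$ are coprime to $7$, and those of $\mathcal{K}_{\max}$ are coprime to $7$ once the sub-family $q=7$ --- whose members are coprime to $3$ --- is set aside; since $n\in N_1$ implies $n=N_1(\phi(n))$, Proposition~\ref{sparse_totient_property}\ref{sparse_totient_property2} forces $\phi(n)<m(7)$ (or $<m(3)$) for such $n$, whence $n\le N_1(m(7))$ (or $N_1(m(3))$), finitely many values. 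I expect the only substantial work to be in making the $\mathcal{R}$ enumeration airtight --- confirming that, once $v_5(m)\ge 2$ is imposed, no admissible prime set other than $\{2,3,5\}$ has density at least $\tfrac{15}{4}$.
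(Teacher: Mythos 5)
Your proposal is correct in everything the theorem actually needs, and in the hardest parts it takes a genuinely different route from the paper. For $\mathcal{K}_{\max}$ and $\mathcal{K}_{\min}$ you re-derive, from $v_2(q^r(q-1))=1$, exactly the description of $\phi^{-1}(q^r(q-1))$ that the paper obtains from Klee's classification of $\phi^{-1}(m)$ for $m\equiv 2\pmod{4}$ together with its Proposition giving $N_2(q^r(q-1))=K_{q,r}$ and $N_3(q^r(q-1))=k_{q,r}$; your composite members ($r=1$, $q\equiv 3\pmod{28}$, so $7\mid q^2-q+1$, plus Dirichlet) replace the paper's choice $q=3$, $r\equiv 3\pmod 4$ (so $5\mid 2\cdot 3^r+1$) --- both work. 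For $\mathcal{R}$ the paper's proof is a lengthy explicit computation: a competitor $y$ with four prime factors is written as $2\cdot 3^{v_3(y)}\prod\left(2\cdot 3^{a_i}5^{b_i}+1\right)$ and bounded case by case, while competitors with at most three prime factors are handled by the $D(A,B)$ lemmas. You instead maximize $\prod_{p\in W(x)}\frac{p}{p-1}$ over admissible prime sets, use $v_2(m)=3$ to cap the number of odd primes at three, enumerate the only sets beating $\frac{15}{4}$, namely $\{2,3,7,11\}$ and $\{2,3,7,13\}$, and eliminate them by the $5$-adic valuation of $m$; this is shorter than the paper's case analysis and makes the role of $r_2>2$ transparent --- a real gain. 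For $\mathcal{F}$ your greedy ``budget'' argument replaces the paper's comparison of $|W(x)|$ with $|W(y)|$ followed by the $D(A,B)$ lemma; to make it airtight you should record the exchange inequality $(2^{c}+1)2^{C-c}>2^{C}+1$ for $c<C$, which justifies preferring the cheaper Fermat primes (the inequality you state only justifies converting $2$-power budget into Fermat primes). The finiteness in $N_1$ via the Masser--Shiu divisibility property, splitting $\mathcal{K}_{\max}$ into the $q\neq 7$ and $q=7$ subfamilies, is precisely the paper's intended argument, correctly spelled out. One parenthetical is wrong: $\mathcal{R}$ does not fail at $r_2=1$, since any $x$ with prime set $\{2,3,7,13\}$ has $v_2(\phi(x))\geq 4>3=v_2(\phi(R(r_1,1)))$ and so cannot occur; by your own enumeration $R(r_1,1)\in N_2$ (the failure at $r_2=2$ via $\{2,3,7,11\}$ is genuine for $r_1\geq 2$). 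This slip does not affect the theorem, which only concerns $r_2>2$.
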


\noindent
  From this theorem, we observe that $N_2$ contains infinitely many elements divisible by powers of $n$ where $n$ is $2$, $5$ or a prime $q$ with  $q \equiv 3\pmod{4}$. 
  The infinite family $\mathcal{F}$ of elements of $N_2$ shows the importance of Fermat primes to generate many elements in $N_2$. The family $\mathcal{K}_{\min}$ of elements of $N_3$ 
  shows that $N_3$ contains infinitely many elements divisible by powers of some prime $q$ where  $q\equiv 3\pmod{4}$.

Though we have given examples of infinite families in $N_2$ and $N_3$, there may still be other elements in these sets.
So, we give bounds for general $N_2(m)$ and $N_3(m)$ in the case when $m \not\equiv 0 \pmod{8}$.
We also study properties of the ratio $\frac{N_2(m)}{N_3(m)}$ and geometric progressions contained inside $N_2$ and $N_3$.
The ratios  $\frac{N_2(m)}{N_3(m)}$ are important in the sense that the statement ``$\frac{N_2(m)}{N_3(m)}>1$ for each $m\in \mathbb{V}$'' is
equivalent to  Carmichael's conjecture which asserts that
$|\phi^{-1}(m)|\geq 2 ~\forall ~ m\in V$. 
\begin{theorem}\label{t4}
 Let $m \in V$.
\begin{enumerate} [label= (\roman*)] 
 \item \label{t41}If $m\equiv 2\pmod{4}$ or $4\pmod{8}$, then $m<N_3(m)<2m$ and $2m<N_2(m)<4m.$
 \item \label{t42}There exist infinitely many $m$ such that  $\frac{N_2(m)}{N_3(m)}=2$. Further, if $m\equiv2 \pmod{4}$, then 
  $2 \leq \frac{N_2(m)}{N_3(m)}\leq 3$.
  \item \label{t43}$N_2$ and $N_3$ contain an infinite geometric progression.
\end{enumerate}
\end{theorem}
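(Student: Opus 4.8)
The plan is to first determine the exact shape of $\phi^{-1}(m)$ from the $2$-adic valuation of $m$, and then read off all three parts. Writing $x=2^a\prod_i p_i^{a_i}$ with the $p_i$ distinct odd primes, the identity $v_2(\phi(x))=\max(a-1,0)+\sum_i v_2(p_i-1)$ shows that each odd prime factor of $x$ contributes at least $1$ to $v_2(\phi(x))$. Consequently: if $m\equiv 2\pmod 4$, every $x\in\phi^{-1}(m)$ has the form $p^k$ or $2p^k$ with $p\equiv 3\pmod 4$ (save the single exception $x=4$ when $m=2$); and if $m\equiv 4\pmod 8$, every $x\in\phi^{-1}(m)$ has one of the forms $p^k$ or $2p^k$ with $p\equiv 5\pmod 8$, $4p^k$ with $p\equiv 3\pmod 4$, or $p^kq^l$ or $2p^kq^l$ with $p\ne q$ both $\equiv 3\pmod 4$ (save $x=8$ when $m=4$). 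Establishing this case list is routine, but it is the engine for everything below; in particular it makes visible that $\phi^{-1}(m)$ always contains both odd and even elements, since $2x\in\phi^{-1}(m)$ for any odd $x\in\phi^{-1}(m)$.

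For part (i), the bound $N_3(m)>m$ is just $\phi(x)<x$ for $x>1$. For the remaining bounds I use that $x/m=x/\phi(x)=\prod_{p\mid x}p/(p-1)$ for every $x\in\phi^{-1}(m)$ and estimate this product over the admissible prime sets. When $m\equiv 2\pmod 4$: an odd solution $p^k=m\cdot p/(p-1)$ obeys $m<p^k\le\tfrac32 m$, an even solution $2p^k$ obeys $2m<2p^k\le 3m$, and both types occur, giving $m<N_3(m)<2m$ and $2m<N_2(m)<4m$. When $m\equiv 4\pmod 8$: every admissible product is at most $\tfrac72$ (attained by $x=2\cdot 3^k\cdot 7^l$), so $N_2(m)<4m$; the double of an odd solution is an even solution exceeding $2m$, so $N_2(m)>2m$; and since every odd admissible product is $<2$, it suffices to exhibit one odd solution to conclude $N_3(m)<2m$. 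The one genuinely delicate point — and the step I expect to be the main obstacle — is precisely this last claim: a priori $\phi^{-1}(m)$ could consist only of numbers $4p^k$, all exceeding $2m$. The remedy is that whenever $4p^k\in\phi^{-1}(m)$ with $p\ne 3$ one has $\phi(p^k)=m/2$, hence $\phi(3p^k)=m$ and the odd number $3p^k$ is a solution; and whenever $4\cdot 3^k\in\phi^{-1}(m)$ with $k\ge 2$ the odd number $7\cdot 3^{k-1}$ is a solution, since $\phi(7\cdot 3^{k-1})=4\cdot 3^{k-1}=m$. The finitely many tiny cases $m\in\{2,4\}$ are checked directly. This closes part (i).

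For part (ii), when $m\equiv 2\pmod 4$ the case list yields $N_2(m)/N_3(m)=2\cdot\frac{p_0/(p_0-1)}{p_1/(p_1-1)}$, where $p_0\le p_1$ are the smallest and largest primes $p\equiv 3\pmod 4$ with $\phi(p^k)=m$; since $p_0\ge 3$ and $p_1/(p_1-1)>1$, this ratio lies in $[2,3]$. For infinitely many $m$ with ratio exactly $2$, take $m=\phi(q^{r+1})=q^r(q-1)$ with $q\equiv 3\pmod 4$ and $q^r(q-1)+1$ composite: a short divisibility argument shows any $p\ne q$ with $\phi(p^k)=m$ forces $q^r\mid p-1$, hence $k=1$ and $p=q^r(q-1)+1$, which is excluded; so $\phi^{-1}(m)=\{q^{r+1},2q^{r+1}\}$ and the ratio is $2$. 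Concretely, with $q=11$ and any odd $r$ one has $3\mid 11^r\cdot 10+1$, giving infinitely many such $m$. For part (iii), Theorem~\ref{t3} (or the case list directly) gives $2\cdot 3^{r+1}\in N_2$ for all $r\ge 1$, so $\{2\cdot 3^n:n\ge 2\}$ is an infinite geometric progression of ratio $3$ inside $N_2$; and the computation just made shows $11^{r+1}\in N_3$ for every odd $r$, so $\{11^{2j}:j\ge 1\}$ is an infinite geometric progression of ratio $121$ inside $N_3$. This completes the proof.
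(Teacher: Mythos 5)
Your proposal is correct and follows essentially the same route as the paper: classify $\phi^{-1}(m)$ via the $2$-adic valuation (the paper's Lemma~\ref{klee2mod4} and its mod-$8$ analogue, Lemma~\ref{N2N3ratio_(2)}), use the $3p^k$ and $7\cdot 3^{k-1}$ substitutions to rule out $v_2=2$ extremals and secure odd solutions, pin down $\phi^{-1}(q^r(q-1))$ as in Proposition~\ref{N2N3points} for the ratio $2$ and the geometric progressions. The only differences are cosmetic choices of witnesses (you take $q=11$ with $3\mid 11^r\cdot 10+1$ for odd $r$, where the paper uses $q=3$, resp.\ $q\equiv 3,7\pmod{20}$, with divisibility by $5$), so the arguments match in substance.
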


In Section \ref{section 4}, we discuss about the existence of arithmetic progressions in infinite subsets of natural numbers. 
The famous Szemer\'{e}di's Theorem\cite{gowers} gives a sufficient condition for the existence of arbitrarily long 
arithmetic progressions in a subset of the integers, namely, a positive asymptotic density.
But this is no necessary condition. Therefore we give a class of subsets of the integers having zero asymptotic density
and containing arbitrarily long arithmetic progressions. These sets are formed by taking exactly one element from
each pre-image $\phi^{-1}(m)$, $m \in V$. Theorem \ref{t5} below follows as a consequence by using results 
due to Green-Tao\cite{green-tao} and Erd\H{o}s\cite[Theorem 4]{erdos}. Therefore
\begin{theorem}\label{t5}
 If $f\colon V \rightarrow \mathbb{N}$ is such that $f(m)\in \phi^{-1}(m)$, then $f(V)$ contains arbitrarily long arithmetic
progressions. 
\end{theorem}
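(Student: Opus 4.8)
The plan is to combine two ingredients. First, recall Erd\H{o}s's result \cite[Theorem 4]{erdos}: there is a positive proportion of totients $m \in V$ for which $\phi^{-1}(m)$ consists entirely of numbers of a special multiplicative shape --- more precisely, Erd\H{o}s exhibits, for every $k$, a set $A_k$ of primes $p$ (of positive relative density among primes, or at least infinite) such that $p-1$ has no small prime factors except a controlled set, so that $\phi^{-1}(p-1)$ is forced to contain a specified prime $p$ itself, or more usefully, a set of $m$'s for which $\phi^{-1}(m)$ is a singleton or lies in a prescribed residue class. I would first isolate the precise statement I need: that the set $W := \{ m \in V : \phi^{-1}(m) \subseteq a\mathbb{N} + b \text{ for some fixed admissible } a,b\}$ --- or more simply, that among the primes $p$, a positive-density subset has the property that $\phi(p) = p-1$ is a totient whose only inverse images are forced into an arithmetic progression --- contains arbitrarily long arithmetic progressions via Green--Tao.

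Second, here is the cleaner route I would actually take. For each prime $p$, we have $\phi(p) = p - 1$. I would restrict attention to the value $m = p-1$ and ask: what is $\phi^{-1}(p-1)$? It always contains $p$. The key point is that for \emph{many} primes $p$, the set $\phi^{-1}(p-1)$ contains \emph{only} $p$ (and possibly $2p$ if $p-1$ is not... one must be careful about parity). So the idea is: pick an increasing sequence of primes $p_1 < p_2 < \cdots$ forming a long arithmetic progression (Green--Tao), and such that for each $p_j$ in the progression, $f(p_j - 1)$ is forced to equal $p_j$ (or to be a fixed linear function of $p_j$). Then $f(V) \supseteq \{f(p_j - 1)\}_j$ which is itself (a dilate/translate of) an arithmetic progression. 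The obstruction is that $f$ is an \emph{arbitrary} selector, so I cannot force $f(p-1) = p$ unless $\phi^{-1}(p-1)$ is a singleton. Hence I need: there are arbitrarily long arithmetic progressions of primes $p$ with $|\phi^{-1}(p-1)| = 1$, equivalently $p-1$ has a unique totient preimage. This is exactly (a strengthening of the direction of) the Carmichael-type analysis, and it is here that Erd\H{o}s's Theorem 4 enters --- it guarantees a positive density set of such primes (or of such totients with all preimages in one residue class), and positive relative density among a Green--Tao-amenable set yields the long APs.

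Concretely, the steps in order: (1) State the needed consequence of Erd\H{o}s \cite[Theorem 4]{erdos}: the set $S$ of primes $p$ such that every $n$ with $\phi(n) = p-1$ lies in a single fixed residue class (indeed one can arrange $n \in \{p, 2p\}$, and these differ by the factor $2$, a ratio, not helpful for APs directly --- so I would push for genuine singletons or for the common value, using that if $p \equiv 3 \pmod 4$ then $p - 1 \equiv 2 \pmod 4$ and the structure of $\phi^{-1}$ of a number $\equiv 2 \pmod 4$ is very restricted by Theorem \ref{t4}\ref{t41}, namely all preimages lie in $(m, 4m)$, and one analyzes which ones survive). (2) Show $S$ has positive relative density in the primes, or at least that $S$ is ``Green--Tao large'': by \cite{green-tao} and its relative Szemer\'edi extension, a subset of the primes of positive relative upper density contains arbitrarily long APs. (3) Given a $k$-term AP $p, p+d, \ldots, p+(k-1)d$ inside $S$, the images $f(p-1), f(p+d-1), \ldots$ are the \emph{unique} preimages, hence equal to a fixed affine function of the $p + jd$; if that function is $n \mapsto n$ then $\{f(p+jd-1)\}$ is a $k$-term AP in $f(V)$; if it is $n \mapsto 2n$ then we get $2p - 2, 2p + 2d - 2, \ldots$, still a $k$-term AP with common difference $2d$. (4) Since $k$ is arbitrary, $f(V)$ contains arbitrarily long APs. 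The main obstacle is step (1)--(2): extracting from Erd\H{o}s's theorem a clean statement that simultaneously (a) pins down $\phi^{-1}(m)$ tightly enough that an arbitrary selector is forced onto an AP, and (b) leaves enough primes to invoke Green--Tao; reconciling the ``arbitrary $f$'' with the need for rigidity is the crux, and the parity subtleties around $2p$ versus $p$ must be handled carefully, likely by restricting to $p \equiv 3 \pmod 4$ and using Theorem \ref{t4}.
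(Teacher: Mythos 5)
Your high-level plan (Erd\H{o}s plus Green--Tao) is the right one, but the concrete route you propose has a fatal flaw and a missing idea. The ``cleaner route'' via primes $p$ with $|\phi^{-1}(p-1)|=1$ is void: for every odd prime $p$ we have $\phi(2p)=\phi(p)=p-1$, so $\phi^{-1}(p-1)$ always contains both $p$ and $2p$, and there are simply no primes with a singleton preimage of $p-1$. Your fallback --- restrict to primes with $\phi^{-1}(p-1)=\{p,2p\}$ exactly --- would indeed give the needed rigidity, but then the whole burden shifts to showing that such primes have positive relative density in $\mathbb{P}$ (for $p\equiv 3\pmod 4$ the alternative $A(p-1)=4$ case of Lemma \ref{klee2mod4} brings in a prime power $q^{\beta}$ that is \emph{not} an affine function of $p$, so those primes are useless to you); neither you nor the paper proves such a density statement, and it is not a consequence of Theorem \ref{t4}. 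The paper avoids this entirely by applying Erd\H{o}s's theorem (Proposition \ref{erdos_totients}) not to $m=p-1$ but to $m'(p-1)$ for one \emph{fixed} totient $m'$ with $|\phi^{-1}(m')|=2$ (e.g.\ $m'=1$ or $10$): for a set $P_1$ of primes of positive relative density, $\phi^{-1}(m'(p-1))=p\,\phi^{-1}(m')=\{pN_3(m'),pN_2(m')\}$, so the selector's two options are fixed multiples of $p$.

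The second gap is how you handle the arbitrariness of $f$. In your step (3) you take a Green--Tao progression first and then assert that $f$ acts by a single affine map ($n\mapsto n$ or $n\mapsto 2n$) along that progression; an adversarial selector can mix the two choices and destroy the progression. The correct order, and the paper's key remaining step, is a pigeonhole \emph{before} invoking Green--Tao: partition $P_1$ into $P_2=\{p: pN_2(m')\in f(V)\}$ and $P_3=\{p: pN_3(m')\in f(V)\}$; at least one has positive relative upper density, Proposition \ref{green_tao} gives arbitrarily long progressions of primes inside it, and multiplying by the fixed constant $N_2(m')$ or $N_3(m')$ turns these into arbitrarily long progressions inside $f(V)$. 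Without the ``fixed $m'$'' device and this pigeonhole, your argument does not go through.
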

Indeed, we observe that these sets satisfy the hypothesis of the so-called
Erd\H{o}s-Tur\'{a}n conjecture\cite[page 4]{gowers} which asserts that if a set $X$ of positive integers 
such that the sum of reciprocals of elements of $X$
diverges, then $X$ contains arbitrarily long arithmetic progressions.

Finally, in Section \ref{section 5}, we pose some questions about elements of $N_2$ and 
Banach density of $N_2$ and $N_3$ arising from the present
work.

We use the following notation in this paper. Let $\mathbb{N}, \mathbb{P}$, $\mathbb{R}^+$ and $\mathbb{Z}$ denote, respectively, the 
set of positive integers, the set of prime numbers, the set of positive real numbers and the set of integers. $p, q$ will always represent prime numbers
unless otherwise mentioned. We write $f(x)=o(g(x))$ if $\frac{f(x)}{g(x)} \rightarrow 0$ as $x \rightarrow \infty$. 
$\lfloor x \rfloor$ denotes the greatest integer less than or equal to $x$, $[a, b]$ denotes the set 
$\{x \in \mathbb{N}: a \leq x \leq b\}$ and similarly for the sets 
$(a, b], [a, b)$ and $(a, b)$ and finally $W(x)$ denotes the set of prime divisors of $x$. By convention, we assume empty products and 
empty sums to take the values 1 and 0 respectively. By ``a divergent sequence $(x_n)$'', we mean that $x_n\rightarrow \infty$ as $n\rightarrow \infty$.

\section{\textbf{Sparse subsets of natural numbers and sparsely totient numbers}}\label{section2}
It is well-known that the set of totients $V$ is sparsely distributed, i.e., has asymptotic density zero (see, for example, \cite{forddensity} and the references therein).
\begin{proposition}[Kevin Ford \cite{forddensity}] \label{Totient_estimates}
 If $V(x)$ is the number of totients less than or equal to $x$, then $$V(x)=\displaystyle\frac{x}{\log x}\exp\{(C+o(1))(\log\log\log x)^2\},$$ where
 $0.81<C<0.82.$ 
\end{proposition}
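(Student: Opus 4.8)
The plan is to follow Ford's strategy, which refines a chain of earlier results (Pillai, Erd\H{o}s, Erd\H{o}s--Hall, Pomerance, and Maier--Pomerance, the last of whom obtained the shape of the formula). Write $\ell_j = \ell_j(x)$ for the $j$-fold iterated natural logarithm of $x$, so the target is $V(x) = \frac{x}{\log x}\exp\{(C+o(1))\,\ell_3^{\,2}\}$ for an explicitly defined $C$, after which one checks $0.81 < C < 0.82$ by a numerical evaluation. I would prove matching upper and lower bounds of this shape.

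\emph{Anatomy of a totient.} For $v \in V$ with $v \le x$, fix a preimage $m$, say $m = q_1^{a_1}\cdots q_r^{a_r}$ with $q_1 > \cdots > q_r$; then $\prod_i(q_i-1)$ divides $v$ and $v \asymp m$ up to a slowly varying factor. Each shifted prime $q_i-1$ in turn has its own prime factors, whose largest members again have shifted-prime structure, and so on; recording the relation ``$q'\mid q-1$'' attaches to $v$ a rooted forest whose vertices are primes. The central step --- and the main obstacle --- is a structure theorem: for all but $o(V(x))$ totients $v\le x$ this forest can be chosen with a bounded number of generations $k$, with the number $\lambda_i$ of vertices in generation $i$ concentrated near prescribed values and the primes there confined to controlled dyadic windows. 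Establishing this requires uniform upper bounds, via the Brun--Titchmarsh inequality and sieve methods, for the number of primes $p\le y$ with a fixed integer dividing $p-1$ and with $\Omega(p-1)$ (the number of prime factors of $p-1$ counted with multiplicity) in a narrow range, together with large-deviation estimates for the distribution of $\Omega(p-1)$ over primes.

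\emph{Upper bound.} Given the admissible ``shapes'' from the structure theorem, the number of $v\le x$ realizing a fixed shape is at most a product over generations of binomial-type factors (which primes occur) and sieve factors (primes with prescribed divisibility). Summing over the finitely many relevant shapes reduces the estimate for $V(x)$ to maximizing an entropy-type functional of the generation sizes $\lambda_i$ subject to the compatibility constraints between consecutive generations. After the substitution that makes the extremal sequence $(\lambda_i)$ geometric-like, the maximum equals $(C+o(1))\,\ell_3^{\,2}$, where $C = \bigl(2\log(1/\varrho)\bigr)^{-1}$ and $\varrho\in(0,1)$ is the unique root of the associated functional equation; a direct numerical evaluation of $\varrho$ (giving $\log(1/\varrho)\approx 0.611$) then yields $0.81 < C < 0.82$.

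\emph{Lower bound and conclusion.} For the matching lower bound one reverses the construction: choose, generation by generation, a family of primes so that the optimal shape is actually attained --- this needs lower-bound sieve input (or Bombieri--Vinogradov type results on shifted primes) guaranteeing many primes $p$ with $p-1$ having a prescribed rough factorization --- set $m$ equal to the product of the top-generation primes, and note that the values $\phi(m)$ so produced are essentially pairwise distinct, since $v$ determines its factorization forest up to bounded ambiguity so collisions are negligible. Counting these products gives $\gg \frac{x}{\log x}\exp\{(C-o(1))\,\ell_3^{\,2}\}$ totients, and combined with the upper bound this yields the asymptotic formula, hence the proposition. The hard part, as flagged, is the structure theorem together with the variational computation: pinning down the exact constant $C$ --- rather than merely the order of the exponent, which was accessible earlier --- forces one to track the generation-by-generation anatomy of totients with asymptotically sharp sieve estimates.
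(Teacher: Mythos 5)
There is nothing in the paper to compare against here: Proposition \ref{Totient_estimates} is quoted as an external input, attributed to Ford \cite{forddensity}, and the authors give no proof of it (nor should they --- it is a deep theorem whose proof runs to dozens of pages). Judged on its own terms, your outline does faithfully reproduce the strategy in the cited literature: the ``anatomy of a totient'' via chains of shifted primes, a structure theorem confining almost all totients $v\le x$ to a bounded number of generations with controlled generation sizes, an entropy/variational maximization over admissible shapes producing the exponent $(C+o(1))(\log\log\log x)^2$, a sieve-based lower-bound construction, and the identification $C=\bigl(2\log(1/\varrho)\bigr)^{-1}\approx 0.8178$ with $\varrho\approx 0.5426$ the root of the relevant functional equation. (As a historical point, the asymptotic in exactly this form is due to Maier and Pomerance; Ford's paper, which is what is cited, sharpens it to a determination of the true order with lower-order terms, and contains this statement as a consequence.)

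However, as a \emph{proof} the proposal has a genuine gap, and you flag it yourself: the two load-bearing steps --- the structure theorem (which needs uniform Brun--Titchmarsh/sieve upper bounds for primes $p$ with prescribed divisors of $p-1$ and large-deviation control of $\Omega(p-1)$, all uniform enough to sum over shapes) and the variational computation identifying the extremal generation profile and hence the constant $C$ --- are named but not carried out, and the lower bound's claim that the constructed $\phi$-values are ``essentially pairwise distinct'' is asserted rather than argued. Everything that makes the theorem hard lives in exactly those steps, so what you have is an accurate roadmap of Ford's and Maier--Pomerance's argument rather than a proof. For the purposes of this paper the honest move is the one the authors make: cite \cite{forddensity} and use the statement as a black box.
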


Here, we study the sparseness of the set of totients $V$, the set of sparsely totient numbers $N_1$ and other
subsets of natural numbers using a generalized version of asymptotic density called Banach density. We will define Banach density using F\o{}lner sequences.
\begin{definition}[F\o{}lner sequence]
 A F\o{}lner sequence in a countable commutative semigroup $(G,+)$ is a sequence $(F_{n})_{n\in\mathbb{N}}$ of
finite subsets of G such that $\forall~ g\in G$,
$$\displaystyle\lim_{n\rightarrow \infty}\displaystyle\frac{|(g+F_{n})\cap F_n|}{|F_n|}=1.$$

\end{definition}
\begin{example}
 In the semigroup $(\mathbb{N},+)$, let $F_n=[\alpha_n, \beta_n]$, with $\beta_n - \alpha_n \rightarrow \infty$ 
 as $n \rightarrow \infty$,  then $(F_n)_{n\in \mathbb{N}}$ is a F\o{}lner sequence.
\end{example}
 
\begin{definition}[Density of a Subset of $\mathbb{N}$]
 Let $(F_{n})_{n\in\mathbb{N}}$ be a F\o{}lner sequence in $\mathbb{N}$ and $A\subset \mathbb{N}$. Then the upper density of $A$ with respect to the F\o{}lner
 sequence $(F_n)_{n\in\mathbb{N}}$ is defined by $$ \overline d_{F_n}(A)=\limsup_{n\rightarrow \infty}\displaystyle\frac{|F_n\cap A|}{|F_n|} $$
 and the lower density of $A$ with respect to the F\o{}lner sequence $(F_n)_{n\in\mathbb{N}}$ is defined by $$ \underline d_{F_n}(A)=\liminf_{n\rightarrow \infty}\displaystyle\frac{|F_n\cap A|}{|F_n|} .$$
 If the upper density and the lower density are equal, then we say that the density of $A$ with respect to the 
 F\o{}lner sequence exists and it equals $$ d_{F_n}(A)=\lim_{n\rightarrow \infty}\displaystyle\frac{|F_n\cap A|}{|F_n|}. $$
 \end{definition}
 \begin{definition}[Asymptotic density]\label{asympdensity}
  The density with respect to the F\o{}lner sequence $([1,n])_{n\in \mathbb{N}}$ is called Asymptotic density. In this case, the upper asymptotic 
  density, the lower asymptotic density  and the density of a subset $A$ are denoted by $\overline d(A)$, $\underline d(A)$
and $d(A)$ respectively.
\end{definition}

\begin{definition}[Banach density]
The Banach density $\displaystyle d^{\ast}(A)$ of $A\subset \mathbb{N}$ is defined by 
$$d^{\ast}(A)=\displaystyle \sup \{\overline d_{F_n}(A)\colon (F_{n})_{n\in\mathbb{N}} \text{ is a F\o{}lner sequence in } \mathbb{N}\}.$$
\end{definition}

\begin{example}
  Banach density of the set of primes is zero. (See \cite[p. 194]{ruzsa}) 
 \end{example}

Using $F_n=[1,n] ~\forall n\in \mathbb{N}$ in the  following proposition, one can observe that the  Banach density of a subset of $\mathbb{N}$
is equal to density of that subset with respect to the F\o{}lner sequence $([t_n+1, t_n+n])_{n\in\mathbb{N}}$ for 
some sequence $(t_n)_{n\in\mathbb{N}}$ in $\mathbb{N}$. Therefore it is enough to consider F\o{}lner sequences formed by intervals in $\mathbb{N}$  
to evaluate Banach density. 
\begin{proposition}[Beiglb\"{o}ck et al., p. 418, \cite{amenable}]
 Given a subset $A$ of $\mathbb{N}$ and any F\o{}lner sequence $(F_{n})_{n\in \mathbb{N}}$, there is a sequence $(t_n)_{n\in\mathbb{N}}$ such that
 $$ d^{\ast}(A)=d_{(F_n+t_n)}(A).$$
\end{proposition}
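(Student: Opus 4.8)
The plan is to reduce the Banach density to the standard combinatorial invariant built from intervals, and then to exploit the fact that shifting each term of a F\o{}lner sequence by an arbitrary integer keeps it a F\o{}lner sequence. For $N\in\mathbb{N}$ set $a_N:=\max_{t\geq 0}\bigl|A\cap[t+1,t+N]\bigr|$. Splitting an interval of length $M+N$ into two pieces gives $a_{M+N}\leq a_M+a_N$, so Fekete's subadditivity lemma shows that $\alpha:=\lim_{N\to\infty}a_N/N=\inf_{N}a_N/N$ exists, with $0\leq\alpha\leq 1$ and, crucially, $a_N\geq\alpha N$ for \emph{every} $N$. Two easy preliminaries are needed: (i) any F\o{}lner sequence $(F_n)$ in $\mathbb{N}$ satisfies $|F_n|\to\infty$ --- otherwise, along a subsequence on which $|F_n|$ is constant, the F\o{}lner condition at $g=1$ would force $\min F_n-1\in F_n$; and (ii) for any integers $t_n$ the sequence $(F_n+t_n)$ is again F\o{}lner, since $(g+(F_n+t_n))\cap(F_n+t_n)=\bigl((g+F_n)\cap F_n\bigr)+t_n$ has the same cardinality as $(g+F_n)\cap F_n$ while $|F_n+t_n|=|F_n|$.

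Step 1: $d^{\ast}(A)\leq\alpha$. Fix an arbitrary F\o{}lner sequence $(F_n)$ and an arbitrary $N$, and partition $\mathbb{N}$ into consecutive blocks $B_j=[(j-1)N+1,jN]$. For a fixed $n$, call $B_j$ \emph{full} if $B_j\subseteq F_n$ and \emph{partial} if $\emptyset\neq B_j\cap F_n\subsetneq B_j$. The full blocks are pairwise disjoint and lie inside $F_n$, so there are at most $|F_n|/N$ of them, and each contributes $|A\cap B_j|\leq a_N$ to $|A\cap F_n|$. A partial block must contain two consecutive positions $x,x+1\in B_j$ with exactly one of them in $F_n$; distinct partial blocks yield distinct such $x$, so the number of partial blocks is at most $\bigl|\{x:\mathbf{1}_{F_n}(x)\neq\mathbf{1}_{F_n}(x+1)\}\bigr|$, which --- via the injection $x\mapsto x+1$ into $(1+F_n)\triangle F_n$ --- is $o(|F_n|)$ by the F\o{}lner condition at $g=1$. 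Bounding each partial block's contribution crudely by $N$, we obtain $|A\cap F_n|\leq(a_N/N)|F_n|+o(|F_n|)$, so $\overline d_{F_n}(A)\leq a_N/N$ for every $N$, hence $\overline d_{F_n}(A)\leq\alpha$, and taking the supremum over F\o{}lner sequences gives $d^{\ast}(A)\leq\alpha$.

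Step 2: constructing $(t_n)$. If $\alpha=0$ then by Step 1 every F\o{}lner sequence has density $0$, so $t_n=0$ works; assume $\alpha>0$. Fix $n$, let $N:=|F_n|$, and let $L$ be the length of the shortest interval containing $F_n$ (so $N\leq L$). Choose an integer $\ell\geq\max(L/\alpha,\,nL)$ and an interval $J$ with $|J|=\ell$ and $|A\cap J|=a_\ell$. There are exactly $\ell-L+1$ integers $t$ with $F_n+t\subseteq J$; for $x\in F_n$ put $I_x:=\{x+t:F_n+t\subseteq J\}$, a sub-interval of $J$ of length $\ell-L+1$, so $|A\cap I_x|\geq|A\cap J|-(L-1)=a_\ell-(L-1)$. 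Hence
\[
\sum_{t\,:\,F_n+t\subseteq J}\bigl|A\cap(F_n+t)\bigr|=\sum_{x\in F_n}|A\cap I_x|\geq N\bigl(a_\ell-(L-1)\bigr)\geq N(\alpha\ell-L),
\]
using $a_\ell\geq\alpha\ell$. Since there are at most $\ell$ admissible $t$, some translate satisfies $|A\cap(F_n+t)|\geq N(\alpha-L/\ell)\geq N(\alpha-1/n)$; let $t_n$ be such a $t$. By (ii) $(F_n+t_n)$ is F\o{}lner, by construction $\liminf_n|A\cap(F_n+t_n)|/|F_n|\geq\alpha$, and by Step 1 $\limsup_n|A\cap(F_n+t_n)|/|F_n|\leq\alpha$; hence $d_{(F_n+t_n)}(A)$ exists and equals $\alpha$, and then $d^{\ast}(A)\geq\overline d_{(F_n+t_n)}(A)=\alpha\geq d^{\ast}(A)$, giving $d^{\ast}(A)=\alpha=d_{(F_n+t_n)}(A)$, as required. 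The main obstacle is Step 1 --- controlling the ``boundary'' blocks of an arbitrary F\o{}lner set so that interval statistics dominate; once the subadditive constant $\alpha$ is in hand, Step 2 is a short averaging argument.
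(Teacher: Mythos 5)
The paper does not actually prove this proposition: it is quoted, with attribution, from Beiglb\"{o}ck--Bergelson--Fish \cite{amenable}, where it is established for general countable amenable (semi)groups. So there is nothing in the paper to match your argument against; what you have written is a self-contained elementary proof for $(\mathbb{N},+)$, and I find it correct. Your Step 1 (tiling $\mathbb{N}$ by length-$N$ blocks, bounding the number of ``partial'' blocks by $|(1+F_n)\triangle F_n|=o(|F_n|)$ via the F\o{}lner condition at $g=1$) shows that \emph{every} F\o{}lner upper density is at most the interval quantity $\alpha=\lim_N \max_t|A\cap[t+1,t+N]|/N$ furnished by Fekete's lemma; your Step 2 is a clean averaging of $|A\cap(F_n+t)|$ over all translates of $F_n$ inside an $a_\ell$-optimal interval, together with the observation that translates of a F\o{}lner sequence are F\o{}lner. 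Beyond proving the cited statement, this identifies $d^{\ast}(A)$ with the classical interval Banach density $\alpha$, which is precisely the justification for the paper's surrounding remark that interval F\o{}lner sequences suffice to compute $d^{\ast}$. Two cosmetic points: preliminary (i) is never actually used (and its one-line justification is better phrased as $(1+F_n)\cap F_n\subseteq F_n\setminus\{\min F_n\}$, forcing $|F_n|\to\infty$); and your $t_n$ may be a negative integer when the optimal interval $J$ lies to the left of $F_n$ (still legitimate, since $F_n+t_n\subseteq J\subseteq\mathbb{N}$), but if one insists on $t_n\in\mathbb{N}$ as in the paper's later use, it suffices to pick $J$ to the right of $F_n$, using that $\max_{t\geq M}|A\cap[t+1,t+\ell]|\geq\alpha\ell$ for every $M$, which follows from the same subadditivity argument. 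Neither point is a gap.
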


\subsection{Proof of Theorem \ref{t1}}

We now evaluate the densities of the sets $N_1,N_2,N_3$ and more generally, for sets of the form $f(V)$ where $f:V\rightarrow \mathbb{N}$ is an injective map such that $f(m)\in \phi^{-1}(m)$.  
For this, we start with some necessary lemmas. 

\begin{lemma}\label{Totient_lemma4}
 Suppose that $(F_n)_{n\in\mathbb{N}}$ is a F\o{}lner sequence on $\mathbb{N}$ defined by $$F_n=(x_n,x_n(1+\alpha_n)],$$  where
$(x_n)_{n=1}^{\infty}$ is a sequence in $\mathbb{N}$  and $(\alpha_n)_{n=1}^{\infty} $ is a sequence of positive reals such that $\alpha_n>\alpha_0>0$ for each $n \in \mathbb{N}$. Then
$\overline d_{F_n}(V)=0$.
\end{lemma}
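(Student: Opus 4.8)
The plan is to bound the number of totients in $F_n$ by the global totient count $V(y_n)$, where $y_n := x_n(1+\alpha_n)$ is the right endpoint of $F_n$, and then invoke Ford's estimate (Proposition~\ref{Totient_estimates}), which in particular gives $V(x) = o(x)$.

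First I would record that $|F_n| \to \infty$, as holds for every F\o{}lner sequence of finite subsets of $\mathbb{N}$: testing the F\o{}lner condition against $g = 1$ and using that $F_n$ is a block of consecutive integers forces $|(1+F_n)\cap F_n|/|F_n| = 1 - 1/|F_n| \to 1$. Since $x_n \in \mathbb{N}$, we have $|F_n| = \lfloor x_n(1+\alpha_n)\rfloor - x_n$, so $x_n\alpha_n - 1 \le |F_n| \le x_n\alpha_n$; combined with $y_n = x_n + x_n\alpha_n > x_n\alpha_n \ge |F_n|$, this gives $y_n \to \infty$.

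Next comes the key estimate. Since every element of $F_n = (x_n,y_n]$ is at most $y_n$, we have $|F_n \cap V| \le V(y_n)$. For the denominator, using that $t \mapsto \frac{t}{1+t}$ is increasing and $\alpha_n > \alpha_0$,
\[
|F_n| \;\ge\; x_n\alpha_n - 1 \;=\; y_n\cdot\frac{\alpha_n}{1+\alpha_n} - 1 \;\ge\; \frac{\alpha_0}{1+\alpha_0}\,y_n - 1 .
\]
Hence
\[
\frac{|F_n \cap V|}{|F_n|} \;\le\; \frac{V(y_n)}{\dfrac{\alpha_0}{1+\alpha_0}\,y_n - 1},
\]
which tends to $0$ as $n\to\infty$, because $y_n\to\infty$, the constant $\frac{\alpha_0}{1+\alpha_0}$ is fixed, and $V(y)/y \to 0$ by Proposition~\ref{Totient_estimates}. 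Therefore $\overline d_{F_n}(V) = \limsup_{n\to\infty} |F_n \cap V|/|F_n| = 0$.

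The only point requiring some care is that $\alpha_n$ is bounded below but need not be bounded above, so $y_n$ cannot be controlled by a constant multiple of $x_n$; instead one bounds $|F_n|\asymp x_n\alpha_n$ from below by a fixed fraction of $y_n$. This is precisely where the hypothesis $\alpha_n > \alpha_0 > 0$ enters: it guarantees that $F_n$, however far out in $\mathbb{N}$ it sits, is long enough to register the vanishing global density of $V$.
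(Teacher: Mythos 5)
Your proof is correct and follows essentially the same route as the paper: bound $|F_n\cap V|$ by $V(x_n(1+\alpha_n))$, use $\alpha_n>\alpha_0$ to lower-bound $|F_n|$ by a fixed fraction of the right endpoint, and apply Ford's estimate. The only difference is presentational: you invoke $V(y)=o(y)$ directly, whereas the paper spells out why the factor $\exp\{(C+o(1))(\log\log\log y)^2\}$ is $(\log y)^{o(1)}$ (via $t^2<e^t$ and $C<1$), a short verification you implicitly rely on.
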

\begin{proof}
Since $|F_n|=x_n\alpha_n\rightarrow \infty$ as $n\rightarrow \infty$, we can choose $n_0\in \mathbb{N}$ such that 
 $\log\log\log (1 + \alpha_n)x_n>0$  $\forall ~ n\geq n_0$. For $n \geq n_0$, we get
\begin{align*}
 \displaystyle\frac{|V\cap F_n|}{|F_n|}&=\displaystyle\frac{|V\cap [1,x_n(1+\alpha_n)]|-|V\cap [1,x_n]|}{|F_n|}
 \leq \displaystyle\frac{|V\cap [1,x_n(1+\alpha_n)]|}{|F_n|}.
 \end{align*}
  Using the estimate of $V(x)$ from Proposition \ref{Totient_estimates} (with the same constant $C$ appearing there), we get
\begin{align*}
 \frac{|V\cap F_n|}{|F_n|} & \leq \displaystyle\frac{(1+\alpha_n)}{\alpha_n \log((1+\alpha_n) x_n)}\left(\exp((C+o(1))(\log\log\log (1+\alpha_n)x_n)^2)\right).
 \end{align*}
 Since $\alpha_n > \alpha_0$ for each $n \in \mathbb{N}$, applying the inequality $y^2 < e^y$ for $y >0$ gives us 
 \begin{align*}
 \frac{|V\cap F_n|}{|F_n|} &\leq\displaystyle\frac{(1+\alpha_0)}{\alpha_0}\left( \displaystyle\frac{\left(\log((1+\alpha_n) x_n)\right)^{C+o(1)}}{\log((1+\alpha_n) x_n)}\right)\rightarrow 0 \text{ as } n\rightarrow \infty,
\end{align*}
as $C<1$. Hence $\overline d_{F_n}(V)=0$.
\end{proof}

\begin{lemma} \label{Totient_lemma5}
 Suppose $A,B\subset \mathbb{N}$ and $g\colon A\rightarrow B$ is an injective map satisfying $g(x)\leq x~\forall ~x\in A$. Let $(F_n)_{n\in \mathbb{N}}$ 
 be a F\o{}lner sequence in $\mathbb{N}$ such that  $F_n=(a_n, x_n]$ and $(a_n)_{n=1}^{\infty}$ is bounded. Then $\overline d_{F_n}(B)=0$ implies $\overline d_{F_n}(A)=0$.
\end{lemma}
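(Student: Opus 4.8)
The plan is to exploit the injectivity of $g$ together with the contraction property $g(x)\le x$ to push any element of $A\cap F_n$ into $B$ on a slightly enlarged interval, and then count. Concretely, since $F_n=(a_n,x_n]$ with $(a_n)$ bounded, say $a_n\le M$ for all $n$, and since $|F_n|=x_n-a_n\to\infty$ forces $x_n\to\infty$, I would first fix the trivial observation that for $x\in A\cap F_n$ we have $g(x)\le x\le x_n$, so $g(x)\in B\cap[1,x_n]$. Injectivity of $g$ then gives the key inequality
\[
|A\cap F_n| = |g(A\cap F_n)| \le |B\cap[1,x_n]|.
\]
So the whole argument reduces to comparing $|B\cap[1,x_n]|$ with $|F_n|=x_n-a_n$.

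The second step is to relate $|B\cap[1,x_n]|$ back to the F\o lner sequence $(F_n)$, for which we only know $\overline d_{F_n}(B)=0$, i.e. $|B\cap F_n|/|F_n|\to 0$. The gap is that $|B\cap[1,x_n]|$ counts elements of $B$ in all of $[1,x_n]$, not just in the window $(a_n,x_n]$. But because $a_n\le M$ is bounded, the discrepancy is at most $M$:
\[
|B\cap[1,x_n]| \le |B\cap F_n| + |[1,a_n]| \le |B\cap F_n| + M.
\]
Combining the two displays,
\[
\frac{|A\cap F_n|}{|F_n|} \le \frac{|B\cap F_n| + M}{|F_n|} = \frac{|B\cap F_n|}{|F_n|} + \frac{M}{|F_n|}.
\]
Both terms on the right tend to $0$: the first by hypothesis $\overline d_{F_n}(B)=0$, the second since $|F_n|\to\infty$ and $M$ is a fixed constant. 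Hence $\limsup_n |A\cap F_n|/|F_n| = 0$, i.e. $\overline d_{F_n}(A)=0$.

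The argument is essentially bookkeeping, so there is no serious obstacle; the only point requiring a little care is making sure one is allowed to replace $B\cap F_n$ by $B\cap[1,x_n]$, and this is exactly where the boundedness of $(a_n)$ is used — without it the error term $|[1,a_n]|$ could grow and swamp $|F_n|$. It is also worth noting explicitly that $F_n$ being a genuine F\o lner sequence guarantees $|F_n|\to\infty$, which is what kills the $M/|F_n|$ term; I would state this at the outset so the final limit is unambiguous.
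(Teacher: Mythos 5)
Your proof is correct and follows essentially the same route as the paper: use injectivity of $g$ together with $g(x)\le x$ to get $|A\cap F_n|\le |B\cap[1,x_n]|$, then split off the bounded initial segment $[1,a_n]$ and use $|F_n|\to\infty$ to kill the constant error term. The only cosmetic difference is that the paper phrases the count in terms of $g(A)$ and then notes $g(A)\subset B$, whereas you bound directly by $B$; the argument is identical.
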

\begin{proof}

Since $g\colon A\rightarrow B$ is an injective map and $g(x)\leq x~\forall ~x\in A$, we have $g: F_n\cap A\rightarrow g(A)\cap [1,x_n]$ is injective for each $n\in \mathbb{N}$.
It follows that $|F_n\cap A|\leq |g(A)\cap [1,x_n]| \ \forall \ n\in \mathbb{N}$.  Since $ g(A)\cap [1,x_n]\subset \left(g(A)\cap F_n\right)\cup [1,a_n]$, we get that $|F_n\cap A|\leq |g(A)\cap F_n|+ |[1,a_n]|$. Therefore
 $$\frac{|F_n\cap A|}{|F_n|}\leq \frac{|F_n\cap g(A)|}{|F_n|}+\frac{a_n}{|F_n|}\leq \frac{|F_n\cap g(A)|}{|F_n|}+\frac{a}{|F_n|},$$
 where $a$ is an upper bound of the sequence $(a_n)_{n=1}^{\infty}$. Since $g(A) \subset B$ and $\overline d_{F_n}(B)=0$, we conclude that
$\overline d_{F_n}(A)=0.$
\end{proof}
\begin{corollary} \label{N_1_2_3_asymp_density}
 Let $(F_n)_{n\in\mathbb{N}}$ be a F\o{}lner sequence on $\mathbb{N}$ defined by $F_n=(a_n,x_n],$ where $(a_n)_{n=1}^{\infty}$ is bounded. 
 If $f\colon V \rightarrow \mathbb{N}$ is such that $f(m)\in \phi^{-1}(m)$, then $\overline d_{F_n}(f(V))=0$. In particular, the asymptotic density of $N_1,N_2$  and $N_3$ is zero.
\end{corollary}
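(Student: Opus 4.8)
The plan is to derive this corollary mechanically from the two preceding lemmas, with Ford's totient count (Proposition~\ref{Totient_estimates}) doing all the real work. The strategy has exactly two ingredients: first, that the set of totients $V$ itself has upper density zero along any Følner sequence of the allowed shape; and second, that this vanishing can be pushed down to $f(V)$ (and to $N_1$) through Lemma~\ref{Totient_lemma5}, using $\phi$ as the non-expanding injection required there.

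For the first ingredient I would argue directly. With $F_n=(a_n,x_n]$ and $a_n\leq a$ for all $n$, we have $|V\cap F_n|\leq |V\cap[1,x_n]|=V(x_n)$ and $|F_n|=x_n-a_n\geq x_n-a$; since $|F_n|\to\infty$ and $(a_n)$ is bounded, $x_n\to\infty$, so $x_n-a>x_n/2$ for all large $n$ and hence $\frac{|V\cap F_n|}{|F_n|}\leq \frac{2V(x_n)}{x_n}$. By Proposition~\ref{Totient_estimates} together with the bound $y^2<e^y$ (applied to $y=\log\log\log x$, exactly as in the proof of Lemma~\ref{Totient_lemma4}), $V(x)/x\leq(\log x)^{C-1+o(1)}\to 0$ because $C<1$. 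Thus $\overline d_{F_n}(V)=0$. One could instead write $F_n=(a_n,a_n(1+\alpha_n)]$ with $\alpha_n=(x_n-a_n)/a_n\to\infty$ and quote Lemma~\ref{Totient_lemma4} on the tail where $\alpha_n>\alpha_0$, which suffices since we are computing a $\limsup$.

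For the second ingredient I would apply Lemma~\ref{Totient_lemma5}. In the general case take $A=f(V)$, $B=V$, and $g=\phi|_{f(V)}\colon f(V)\to V$; then $g(x)=\phi(x)\leq x$ for all $x$, and $g$ is injective because $g(f(m))=\phi(f(m))=m$ (this identity also makes $f$ automatically injective, and shows $g(f(V))\subseteq V$). Lemma~\ref{Totient_lemma5} then gives $\overline d_{F_n}(f(V))=0$; taking $f=N_2(\cdot)$ and $f=N_3(\cdot)$, which land in $\phi^{-1}(m)$ by definition, covers $N_2$ and $N_3$. For $N_1$ take $A=N_1$ and again $g=\phi|_{N_1}$: here injectivity of $g$ uses the characterization $n\in N_1\iff \phi(x)>\phi(n)$ for every $x>n$ (immediate from the definition of $N_1(m)$), so $\phi$ is strictly monotone along $N_1$ and in particular one-to-one there, while $\phi(N_1)\subseteq V$ is trivial. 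Finally, asymptotic density is density relative to the Følner sequence $([1,n])_n=((0,n])_n$, i.e. $a_n\equiv 0$ is bounded, so the above applies and yields $d(N_1)=d(N_2)=d(N_3)=0$, which also settles Theorem~\ref{t1}(ii) and the asymptotic-density assertion in Theorem~\ref{t1}(i).

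The only substantive step is the first one, where $C<1$ in Ford's estimate is precisely what forces $V(x)=o(x)$; everything after that is bookkeeping inside Lemma~\ref{Totient_lemma5}. The two places to be slightly careful are the injectivity of $\phi$ on $N_1$, which needs the ``larger arguments have strictly larger $\phi$-value'' description of sparsely totient numbers rather than the bare definition, and the trivial but necessary observation that the interval $[1,n]=(0,n]$ has a bounded left endpoint, so that asymptotic density really does fall under the hypotheses imposed on $(F_n)$.
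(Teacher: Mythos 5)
Your proof is correct and takes essentially the same route as the paper: establish $\overline d_{F_n}(V)=0$ from Ford's estimate (the content of Lemma \ref{Totient_lemma4}), then transfer this to $f(V)$ via Lemma \ref{Totient_lemma5} with $g=\phi$, which is injective on $f(V)$ since $\phi(f(m))=m$. The only cosmetic difference is your treatment of $N_1$, where you check injectivity of $\phi$ on $N_1$ directly instead of invoking the paper's shortcut $N_1\subset N_2$; both are immediate.
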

\begin{proof}
 Consider $g \colon f(V)\rightarrow V$ defined by $g(n) = \phi(n)$. This is an injective map satisfying $g(x)\leq x$ $\forall \ x\in f(V)$.
 Since $\overline d_{F_n}(V)=0$, by Lemma  \ref{Totient_lemma4}, it follows that $\overline d_{F_n}(f(V))=0$ by applying Lemma \ref{Totient_lemma5}. In particular,
 $\overline d_{F_n}(N_i)=0$ for $i= 2, 3$. Also, $N_1\subset N_2$ so that  $\overline d_{F_n}(N_1)=0$.
\end{proof} 

\begin{proposition}\label{Banach_sparse_totient_number}
 Banach density of $N_1$ is zero.
\end{proposition}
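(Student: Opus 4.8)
The plan is to reduce the Banach density of $N_1$ to a single limit over an interval F\o{}lner sequence and then split according to the growth of the left endpoints. Applying the Proposition of Beiglb\"{o}ck et al.\ quoted above to the F\o{}lner sequence $([1,n])_{n\in\mathbb N}$, we obtain a sequence $(t_n)_{n\in\mathbb N}$ of non-negative integers with
\[
d^{\ast}(N_1)=\lim_{n\to\infty}\frac{\bigl|N_1\cap (t_n,t_n+n]\bigr|}{n},
\]
this limit being asserted to exist. Since a convergent sequence equals its value along every subsequence, it suffices to exhibit, in each of the two complementary cases below, a subsequence of indices along which the displayed ratio tends to $0$. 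If $(t_n)$ has a bounded subsequence $(t_{n_j})_j$, then $\bigl((t_{n_j},\,t_{n_j}+n_j]\bigr)_j$ is a F\o{}lner sequence (its intervals have lengths $n_j\to\infty$) of the form $(a_j,x_j]$ with $(a_j)=(t_{n_j})$ bounded, so Corollary \ref{N_1_2_3_asymp_density} gives $\overline d_{(t_{n_j},\,t_{n_j}+n_j]}(N_1)=0$, whence the ratio tends to $0$ along $(n_j)$.

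The remaining case is $t_n\to\infty$, and here I would invoke the divisibility property of sparsely totient numbers from Proposition \ref{sparse_totient_property}(ii): for every prime $p_i$ there is a finite threshold $B_i$, which we may take nondecreasing in $i$, such that every element of $N_1$ exceeding $B_i$ is divisible by $p_i$. For each $n$ set $k_n:=\max\{k:\ B_k\le t_n\}$; since every $B_i$ is finite and $t_n\to\infty$, we have $k_n\to\infty$. Every element of $N_1\cap (t_n,t_n+n]$ exceeds $t_n\ge B_i$ for all $i\le k_n$, hence is a multiple of $Q_{k_n}:=p_1p_2\cdots p_{k_n}\ge 2^{k_n}$; since an interval of length $n$ contains at most $n/Q_{k_n}+1$ multiples of $Q_{k_n}$,
\[
\frac{\bigl|N_1\cap (t_n,t_n+n]\bigr|}{n}\ \le\ \frac{1}{Q_{k_n}}+\frac{1}{n}\ \longrightarrow\ 0 .
\]
Hence $d^{\ast}(N_1)=0$ in both cases.

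I expect the real crux to be choosing the right property of $N_1$ rather than any computation. Theorem \ref{t1} already gives asymptotic density zero, but that is compatible with Banach density $1$ (as for $\bigcup_{n}[10^n,10^n+n]$), so one needs a feature of $N_1$ that forces sparseness inside \emph{every} long interval. The ratio property $n'/n\to 1$ is useless here: it only says consecutive sparsely totient numbers are relatively close, which if anything lets many of them sit in a short interval. The divisibility property, on the other hand, is exactly what confines the elements of $N_1$ inside $(t_n,t_n+n]$ to the single residue class $0\pmod{Q_{k_n}}$ with $Q_{k_n}\to\infty$. Once this is seen, everything else is routine: the finiteness of the thresholds $B_i$ is precisely Proposition \ref{sparse_totient_property}(ii), the count of multiples of $Q_{k_n}$ in an interval is elementary, and the case of bounded left endpoints is dispatched by Corollary \ref{N_1_2_3_asymp_density}.
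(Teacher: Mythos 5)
Your proof is correct and follows essentially the same route as the paper: reduce to interval F\o{}lner sequences, handle bounded left endpoints via Corollary \ref{N_1_2_3_asymp_density}, and handle divergent left endpoints via the divisibility property in Proposition \ref{sparse_totient_property}\ref{sparse_totient_property2}. The only differences are cosmetic: you invoke the Beiglb\"{o}ck et al.\ reduction explicitly and use a growing product $Q_{k_n}=p_1\cdots p_{k_n}$ to drive the ratio to $0$ directly, whereas the paper fixes a single prime $p$, bounds the upper density by $\frac{1}{p}$, and lets $p$ be arbitrary.
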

\begin{proof}
  Let $(F_n)_{n\in\mathbb{N}}$ be a F\o{}lner sequence on $\mathbb{N}$ defined by $F_n=(x_n,x_n+y_n],$ where $(x_n)_{n=1}^{\infty}$ and $(y_n)_{n=1}^{\infty}$ are  sequences
  in $\mathbb{N}$ with $y_n\rightarrow \infty$. To show that the Banach density of $N_1$ is zero, it is enough to prove that
  $\overline d_{F_n}(N_1)=0$ for the two cases when 
  (i) $(x_n)_{n=1}^{\infty}$ is bounded, and (ii) $(x_n)_{n=1}^{\infty}$ is divergent.

  If $(x_n)_{n=1}^{\infty}$ is a bounded sequence, then Corollary \ref{N_1_2_3_asymp_density} establishes that $\overline d_{F_n}(N_1)=0$. So we can
  assume that $(x_n)_{n=1}^{\infty}$ is a divergent  sequence. If $p$ is a prime number, then Proposition \ref{sparse_totient_property}\ref{sparse_totient_property2} gives
  the existence of an  element $m_0(p)\in \mathbb{N}$ such that
  $N_1(m)\equiv 0 \pmod{p}$ for each $m\geq m_0(p)$. Since $(x_n)_{n=1}^{\infty}$ is divergent, we can choose $n_0(p)\in \mathbb{N}$ such that $x_n>N_1(m_0(p))$ for 
  each $n>n_0(p)$. Then $F_n$ contains at most $ \frac{|F_n|}{p}+1$ elements of $N_1$ for $n>n_0(p)$. So, given a prime $p$, there exists
  $n_0(p)\in \mathbb{N}$ such that
  $$ n>n_0(p)\Rightarrow \displaystyle\frac{|F_n\cap N_1|}{|F_n|}\leq\displaystyle \frac{1}{p} + \frac{1}{y_n} .$$ 
  Since $y_n \rightarrow \infty$ as $n \rightarrow \infty$, this means that $\overline d_{F_n}(N_1) \leq \frac{1}{p}$. 
  As this holds for each prime $p$, we conclude that $\overline d_{F_n}(N_1)=0$ if $(x_n)_{n=1}^{\infty}$ is divergent. Therefore, $d^*(N_1)=0$.

  \end{proof}

Hence, proof of Theorem  \ref{t1} is complete by collecting Propositions \ref{Banach_sparse_totient_number} and Corollary \ref{N_1_2_3_asymp_density}.  

\subsection{Some criteria for sparse sets in $\mathbb{N}$}
We have studied the Banach densities of specific sets like $V$ and $N_1$. Now, we are going to investigate the behavior of 
sparse sets which are the images of injective, increasing functions on $\mathbb{N}$. We proceed to the proof of Theorem $\ref{injective_increasing-Banach}$.
 \begin{proof}[Proof of  Theorem \ref{injective_increasing-Banach}\ref{banach_th1a}]
  Let (a,b] be an interval in $\mathbb{N}$ such that $a,b\in f(A)$. Since $f$ is injective and increasing, it follows that
  $\{x\in A\colon f(x)\in (a,b]\}= \{x\in A\colon x\in (f^{-1}(a),f^{-1}(b)]\}$. \\
  Therefore, \begin{equation*}\displaystyle \frac{|(a,b]\cap f(A)|}{b-a} \leq\displaystyle \frac{f^{-1}(b)-f^{-1}(a)}{b-a}= \displaystyle\frac{1}{b-a}\left(\frac{b}{m_b}-\frac{a}{m_a}\right), \label{fA}\end{equation*}
  where $m_a=\displaystyle\frac{a}{f^{-1}(a)}$ and  $m_b=\displaystyle\frac{b}{f^{-1}(b)}$. Since the function
  $\displaystyle\frac{f(n)}{n}$ is increasing on $A$, we get $m_a\leq m_b$. It follows that 
   $$\displaystyle \frac{|(a,b]\cap f(A)|}{b-a}\leq \displaystyle\frac{1}{m_b}.$$ \label{eq1}
   Suppose that $((a_n,b_n])_{n\in \mathbb{N}}$ is a F\o{}lner sequence with $|(a_n,b_n]\cap A|>1$. 
   Then for each $n\in\mathbb{N}$, there
   exist $a_n',b_n' \in f(A)$ such that $(a_n,b_n]\cap f(A)=[a_n',b_n']\cap f(A)$ and $a_n <a_n'< b_n'\leq b_n$. 
   So, $$\limsup_{b_n-a_n\rightarrow \infty}\displaystyle \frac{|(a_n,b_n]\cap f(A)|}{b_n-a_n}\leq \limsup_{b_n-a_n\rightarrow \infty}\displaystyle \frac{|(a_n',b_n']\cap f(A)|}{b_n'-a_n'}\leq\limsup_{b_n-a_n\rightarrow \infty} \displaystyle\frac{1}{m_{b_n'}}.$$
   Note that $b_n-a_n\rightarrow \infty \Rightarrow b_n \rightarrow \infty.$ 
   We claim that $b_n ' \rightarrow \infty$. If not, there exists a subsequence $\{b_{n_k}'\}$ of $\{b_n'\}$ such that
   $b_{n_k}' \leq l \ \forall \ k \in \mathbb{N}$. By the definition of $b_n'$, we know that $(b_{n_k}', b_{n_k}] \cap f(A)
   = \varnothing$ for each $k$. In particular, $(l, b_{n_k}] \cap f(A)= \varnothing$ for all $k$. Since $b_{n_k} \rightarrow \infty$
   as $k \rightarrow \infty$, this implies that $f(x) \leq l \ \forall \ x \in A$. But this is a contradiction since
   $f$ is strictly increasing and hence grows indefinitely. Thus, $b_n ' \rightarrow \infty$. Therefore,
   $$\limsup_{b_n-a_n\rightarrow \infty}\displaystyle \frac{|(a_n,b_n]\cap f(A)|}{b_n-a_n}\leq \limsup_{b_n' \rightarrow \infty} \displaystyle\frac{1}{m_{b_n^{'}}}=0,$$
since $(m_{b_n'})_{n=1}^{\infty}$ is a subsequence of the divergent sequence $\left(\frac{f(n)}{n}\right)_{n\in A}$. 

Hence, $\overline d_{F_n}(f(A))=0$ for all F\o{}lner
sequences $(F_n)_{n\in \mathbb{N}}$ with $F_n=(a_n, b_n]$. So, $d^{\ast}(f(A))=0.$
\end{proof}
\begin{proof}[Proof of Theorem \ref{injective_increasing-Banach}\ref{banach_th2a}]
 Choose $r\in \mathbb{N}\setminus \{1\}$ such that $c_2<(r-1)c_1$ and consider the F\o{}lner sequence 
  $(F_n)_{n\in\mathbb{N}}$ given by $F_n=(n,rn]$.
 Since $f(\mathbb{N})$ is an infinite set, there exists $k'\in \mathbb{N}$ such that for each $n\geq k'$, one can choose integers $a_n, b_n \in f(\mathbb{N})$ such that $(a_n,b_n)\cap f(\mathbb{N})=(n,rn] \cap f(\mathbb{N})$ and $a_n\leq n< rn\leq b_n$.
  Since $f$ is injective and increasing, it follows that
  $\{x\in \mathbb{N}\colon f(x)\in (a_n,b_n)\}= \{x\in \mathbb{N}\colon x\in (f^{-1}(a_n),f^{-1}(b_n))\}$. We now have
 \begin{align}
  \displaystyle \frac{|F_n\cap f(\mathbb{N}) |}{|F_n|}& = \displaystyle \frac{f^{-1}(b_n)-f^{-1}(a_n)-1}{(r-1)n}. \label{positivebanacheq1}
 \end{align}
  From the hypothesis, we have
 \begin{equation}
  c_1x\leq f(x)\leq c_2 x \ \forall \ x\geq n_0. \label{positivebanacheq2}
 \end{equation}

 Choose positive integer $k\geq k'$ such that $f^{-1}(a_n),f^{-1}(b_n)\geq n_0$ for all $n\geq k$. Inserting the values of $f^{-1}(a_n), f^{-1}(b_n)$ obtained from inequality \eqref{positivebanacheq2} in the equation \eqref{positivebanacheq1} for $n\geq k$, we get
 \begin{align*}
  \displaystyle \frac{|F_n\cap f(\mathbb{N}) |}{|F_n|}&\geq \displaystyle\frac{1}{(r-1)n}\left(\displaystyle\frac{b_n}{c_2}-\displaystyle\frac{a_n}{c_1}\right)-\frac{1}{(r-1)n}\\
 &> \displaystyle\frac{1}{(r-1)n}\left(\displaystyle\frac{rn}{(r-1)c_1}-\displaystyle\frac{n}{c_1}\right)-\frac{1}{(r-1)n} \ \ (\text{since } c_2<(r-1)c_1)\\
 &=\displaystyle\frac{1}{c_1(r-1)^2}-\frac{1}{(r-1)n}.
 \end{align*}
 Therefore, $$\limsup_{n \rightarrow \infty} \frac{|F_n\cap f(\mathbb{N}) |}{|F_n|} \geq \frac{1}{c_1(r-1)^2}.$$
 Hence, $\overline d_{F_n}(f(\mathbb{N}))>0$ and therefore $d^{*}(f(\mathbb{N}))>0$. 
 \end{proof}
 
 We drop the `increasing function' hypothesis on $\frac{f(n)}{n}$ in Theorem \ref{injective_increasing-Banach}\ref{banach_th1a} and through the two examples given below
show that the conclusion on Banach density may or may not hold.
 \begin{example}
Given $k,l\geq 2$, we define a map $f_{k,l}\colon \mathbb{N}\rightarrow\mathbb{N}$ by
\begin{align*}
f_{k,l}(x)= \begin{cases} k^{2nl}+(l-1)x &\mbox{if } k^{2n}\leq x< k^{2n+1},\\
x^{l} &\mbox{if } k^{2n+1}\leq x<k^{2n+2}. \end{cases}
\end{align*}
One can see that $f_{k, l}$ is injective and increasing. The function $\frac{f_{k,l}(n)}{n}$ is divergent but not increasing.
Note that $f_{k, l}\left([k^{2n}, k^{2n + 1})\right)$ is an arithmetic progression of length $k^{2n + 1} - k^{2n}$ and common difference
$l-1$. Therefore, $f_{k,l}(\mathbb{N})$ contains arbitrarily long arithmetic progressions with common difference $l-1$. Hence, it has positive Banach density.
\end{example}
The above example suggests that the `increasing' hypothesis on $\frac{f(n)}{n}$ is necessary. However, this is not always the case
as the next example shows.

Suppose $BN_1=\{m \in V \colon ~N_1(m)=N_2(m)\}$. Then the function $h\colon BN_1\rightarrow N_1$ defined by $h(m)=N_1(m)$, is both bijective and increasing. 
By a result of Sanna \cite[Lemma 2.1]{sanna} on the asymptotic of $N_1(m)$, it is readily seen that $\frac{h(m)}{m}=\frac{N_1(m)}{m} \rightarrow \infty$ as $m \rightarrow \infty.$
\begin{lemma}[Sanna\cite{sanna}]\label{N1m_bound}
$N_1(m)\sim e^{\gamma}$ $m\log\log m$ as $m\rightarrow \infty$, where $\gamma$ is the Euler-Mascheroni constant.
\end{lemma}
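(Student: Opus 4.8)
The plan is to establish the matching asymptotic bounds $N_1(m)\le (1+o(1))\,e^{\gamma}m\log\log m$ and $N_1(m)\ge (1-o(1))\,e^{\gamma}m\log\log m$.

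\emph{Upper bound.} Put $n=N_1(m)$, so $\phi(n)\le m$. I would invoke the classical maximal-order estimate for $n/\phi(n)$ -- concretely the Rosser--Schoenfeld inequality $\frac{n}{\phi(n)}<e^{\gamma}\log\log n+\frac{3}{\log\log n}$ for $n\ge 3$, which is a quantitative form of Mertens' product theorem -- to get $n< me^{\gamma}\log\log n+O(m)$. Since $\phi(n)\gg\sqrt{n}$ one has $n\ll m^{2}$, hence $\log\log n\le\log\log m+\log 2+o(1)=(1+o(1))\log\log m$; feeding this back gives $n\le(1+o(1))\,e^{\gamma}m\log\log m$. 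This half is routine estimation.

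\emph{Lower bound.} For each $m$ I would exhibit an integer $n$ with $\phi(n)\le m$ and $n\ge(1-o(1))\,e^{\gamma}m\log\log m$, built from primorials $P_{y}=\prod_{p\le y}p$. The key point is that $\frac{P_{y}c}{\phi(P_{y}c)}=\prod_{p\le y}\frac{p}{p-1}$ for every $y$-smooth $c$, since multiplying an integer by a prime dividing it multiplies $\phi$ by that prime. Let $y$ be the largest prime with $\phi(P_{y})\le m$ and set $r=m/\phi(P_{y})\ge 1$. When $r$ is large (say $r\ge\sqrt{y}$) I take $n=P_{y}\lfloor r\rfloor$, noting $\lfloor r\rfloor$ is $y$-smooth (it is smaller than $p^{+}$, the prime after $y$, hence $\le y$ or composite) and $\phi(n)=\lfloor r\rfloor\phi(P_{y})\le m$; when $r$ is small I instead take $n=P_{y^{-}}c$, where $y^{-}$ is the prime before $y$ and $c$ is a product of two primes $\le y^{-}$ chosen so that $(1-o(1))R\le c\le R$ with $R=m/\phi(P_{y^{-}})=r(y-1)$ -- such a $c$ exists because two-prime products are multiplicatively $(1+o(1))$-spaced near $R$ by the prime number theorem and $R<(y^{-})^{2}$ for large $y$. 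Either way $\phi(n)\le m$ and $n\ge(1-o(1))\,m\prod_{p\le y}\frac{p}{p-1}$, the factor $\frac{y-1}{y}$ coming from $P_{y}\mapsto P_{y^{-}}$ being negligible. Finally, Mertens' theorem gives $\prod_{p\le y}\frac{p}{p-1}=(1+o(1))\,e^{\gamma}\log y$, while $\log P_{y}=\theta(y)\sim y$ together with $\phi(P_{y})\le m<(p^{+}-1)\phi(P_{y})$ forces $\log m\sim y$, so $\log y=(1+o(1))\log\log m$; hence $N_{1}(m)\ge n\ge(1-o(1))\,e^{\gamma}m\log\log m$, and combining with the upper bound proves the claim.

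I expect the main difficulty to be the lower-bound construction in the intermediate range of $r$: one must fill up the residual multiplicative budget with a smooth number \emph{without losing a constant factor}. Descending from $P_{y}$ to the slightly smaller primorial $P_{y^{-}}$ is the trick that makes this work -- it inflates the residual budget $R$ to be large, so that $R$ can be approximated within a factor $1+o(1)$ by a smooth number whose existence is guaranteed by the prime number theorem (Bertrand's postulate would also suffice, with more careful bookkeeping), while the passage $P_{y}\mapsto P_{y^{-}}$ only disturbs $\prod\frac{p}{p-1}$ by the harmless factor $\frac{y-1}{y}$. One could alternatively feed suitable parameters into Masser--Shiu's Proposition~\ref{sparse_totient_element} to produce the sparsely totient numbers needed, but an interpolation over all $m$ would still be required, so the direct construction above seems cleaner. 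Everything else is standard manipulation of Mertens' formula and the prime number theorem.
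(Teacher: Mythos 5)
The paper offers no internal argument for this lemma: it is quoted directly from Sanna \cite{sanna} (Lemma 2.1 there), which itself goes back to the Masser--Shiu asymptotic for sparsely totient numbers, and it is only used later to conclude $N_1(m)/m\to\infty$. Your proposal, by contrast, gives a self-contained proof, and it is essentially correct and follows the classical route: the upper bound by applying the Rosser--Schoenfeld/Mertens bound for $n/\phi(n)$ to $n=N_1(m)$ and using $\phi(n)\gg\sqrt{n}$ to replace $\log\log n$ by $(1+o(1))\log\log m$, and the lower bound by padding a primorial $P_y$ with a $y$-smooth cofactor and using $\log m\sim\theta(y)\sim y$ to convert $\log y$ into $\log\log m$. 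The only step you state more loosely than you should is the intermediate-range construction: the claim that two-prime products are multiplicatively $(1+o(1))$-spaced near $R$ needs to be instantiated, for example by taking $q_1$ the largest prime $\le\sqrt{R}$ and then $q_2$ the largest prime $\le R/q_1$; since in that range $R=r(y-1)\in[y-1,\,y^{3/2})$, both primes are at most $(1+o(1))y^{3/4}$, hence below $y^{-}$ for large $y$, they are automatically $y^{-}$-smooth factors, and the prime number theorem gives $q_1q_2\ge(1-o(1))R$, which is exactly what your argument requires; the loss $\prod_{p\le y^{-}}\frac{p}{p-1}=\frac{y-1}{y}\prod_{p\le y}\frac{p}{p-1}$ is indeed negligible, and your smoothness argument for $\lfloor r\rfloor$ in the easy case (it is below the next prime after $y$, so any prime factor exceeding $y$ would force it to be at least twice that prime) is sound. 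With that detail filled in, both cases give $n\ge(1-o(1))\,m\prod_{p\le y}\frac{p}{p-1}=(1-o(1))\,e^{\gamma}m\log y=(1-o(1))\,e^{\gamma}m\log\log m$, matching the upper bound. What your route buys is independence from the external citation (in effect reproving the Masser--Shiu result); what the paper's citation buys is brevity, since only the crude consequence $N_1(m)/m\to\infty$ is needed there.
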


The next proposition tells us that $\frac{h(m)}{m}$ is not an increasing function.
\begin{proposition}
$\displaystyle\frac{h(n)}{n}\colon BN_1\rightarrow N_1$ is not an increasing function.
\end{proposition}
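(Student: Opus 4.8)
The goal is to exhibit infinitely many pairs $m_1 < m_2$ in $BN_1$ with $\frac{h(m_1)}{m_1} > \frac{h(m_2)}{m_2}$, i.e. $\frac{N_1(m_1)}{m_1} > \frac{N_1(m_2)}{m_2}$, which would contradict monotonicity. My plan is to use the explicit family of sparsely totient numbers from Proposition~\ref{sparse_totient_element} together with the asymptotic in Lemma~\ref{N1m_bound} to compare consecutive values.

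First I would recall from Masser--Shiu that for $n = dp_1\cdots p_{k-1}p_{k+l} \in N_1$ one has $m = \phi(n)$ and, crucially, that $n = N_1(m) = N_2(m)$ for such $n$ (so $m \in BN_1$); in the simplest subcase $d=1$, $l=1$ the number $n = p_1 \cdots p_{k-1} p_{k+1}$ is sparsely totient whenever $p_{k+1} - 1 < 2(p_k - 1)$, which holds for all large $k$ by Bertrand-type prime gap estimates. For such $n$, $\frac{n}{m} = \frac{n}{\phi(n)} = \prod_{i \le k-1}\frac{p_i}{p_i-1}\cdot\frac{p_{k+1}}{p_{k+1}-1}$. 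Now compare two consecutive members of this family, say the one built from the primes up to $p_{k-1}$ with $p_{k+1}$ attached, against the next one built with primes up to $p_k$ and $p_{k+2}$ attached. The ratio $\frac{n}{\phi(n)}$ changes by a factor $\frac{p_k}{p_k-1}\cdot\frac{p_{k+2}}{p_{k+2}-1}\cdot\frac{p_{k+1}-1}{p_{k+1}}$, which is $> 1$ for all large $k$ (since $p_{k+2} > p_{k+1}$ forces $\frac{p_{k+2}}{p_{k+2}-1} < \frac{p_{k+1}}{p_{k+1}-1}$ but the extra factor $\frac{p_k}{p_k-1} > 1$ dominates). Hence along this subsequence $\frac{N_1(m)}{m}$ is \emph{increasing} in $k$ --- which is the wrong direction, so I instead need to interleave a different construction to force a decrease.

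The cleaner route: fix a large modulus structure and vary $d$. Using Proposition~\ref{sparse_totient_element} with $k$ fixed and $l = 0$, the numbers $d p_1 \cdots p_{k-1} p_k$ are sparsely totient for a range of $d$ with $d < p_{k+1} - 1$ and $d(p_k - 1) < (d+1)(p_k-1)$ (automatic), giving many sparsely totient numbers with the same "prime tail" but different small factor $d$; for these, $\frac{n}{\phi(n)} = \frac{d}{\phi'(d)}\cdot\prod_{i\le k}\frac{p_i}{p_i - 1}$ where $\phi'$ accounts for the coprime part, and choosing $d$ to be a prime power versus $d$ with many distinct prime factors makes $\frac{d}{\phi(d)}$ (relative to its contribution) jump up and down non-monotonically as $d$ increases. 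Concretely, taking $d$ a large prime $p$ (with $p < p_{k+1}-1$) gives $\frac{n}{\phi(n)}$ only slightly above $\prod_{i\le k}\frac{p_i}{p_i-1}$, whereas a slightly larger $d' = 2\cdot 3 \cdot 5 \cdots$ (a primorial of comparable size) gives a strictly larger ratio; then an even larger $d''$ that is again prime drops it back down. Translating back through $m = \phi(n)$ (noting $m$ is strictly monotone in $n$ within one such family, so the ordering on $m$ matches the ordering on $d$), one gets $m_1 < m_2 < m_3$ in $BN_1$ with $\frac{N_1(m_1)}{m_1} < \frac{N_1(m_2)}{m_2} > \frac{N_1(m_3)}{m_3}$, the failure of monotonicity. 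I would spell this out for one fully explicit infinite family (e.g. $k$ even, $d$ ranging over $\{p, \text{primorial}, p'\}$ blocks as $k \to \infty$) so that infinitely many such witnessing triples exist.

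The main obstacle is bookkeeping: I must check (i) that the chosen $n$ genuinely satisfy the Masser--Shiu inequalities $d < p_{k+1}-1$ and $d(p_{k+l}-1) < (d+1)(p_k-1)$ for the relevant $d$'s and all large $k$ --- this needs $d$ to be small compared to the prime-gap quantity $p_k - 1$, so I should keep $d = O(\log)$-size or verify primorials of the needed size fit, which may force $l = 0$ and a careful size comparison --- and (ii) that $N_1(m) = N_2(m)$, i.e. $n \in BN_1$, which Masser--Shiu's analysis gives for their constructed family but which I should cite precisely. The asymptotic $N_1(m)\sim e^\gamma m\log\log m$ from Lemma~\ref{N1m_bound} is not even needed for the non-monotonicity per se --- the exact identity $\frac{N_1(m)}{m} = \frac{n}{\phi(n)}$ for these $n$ does all the work --- so the whole argument reduces to producing, for infinitely many $k$, two sparsely totient numbers $n < n'$ in the same "prime-tail" family with $\frac{n}{\phi(n)} > \frac{n'}{\phi(n')}$, which the prime-versus-primorial choice of $d$ achieves.
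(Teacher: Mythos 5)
Your reduction is sound in spirit --- it suffices to exhibit two sparsely totient numbers $n<n'$ with $\phi(n)<\phi(n')$ but $n/\phi(n)>n'/\phi(n')$, and membership of $\phi(n)$ in $BN_1$ is automatic (if $n$ is sparsely totient then $N_1(\phi(n))=n$ and $n\le N_2(\phi(n))\le N_1(\phi(n))$, so $N_1=N_2$ there) --- but the construction you propose to realize it cannot work. In your ``cleaner route'' ($l=0$, fixed tail, varying $d$), the Masser--Shiu constraint $d<p_{k+1}-1$ forces every prime factor of $d$ to be at most $p_k$, so the radical of $n=dp_1\cdots p_k$ is $p_1\cdots p_k$ for \emph{every} admissible $d$; consequently
$$\frac{N_1(m)}{m}=\frac{n}{\phi(n)}=\prod_{i\le k}\frac{p_i}{p_i-1}$$
is constant on the whole family. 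A prime value of $d$ below $p_{k+1}-1$ is simply one of $p_1,\dots,p_k$ and does not lift the ratio ``slightly above'' the product, and a primorial $d$ leaves it equally unchanged (your correction factor $d/\phi'(d)$ is identically $1$), so the promised up-and-down jumping, and with it the witnessing triple $m_1<m_2<m_3$, never materializes; a constant ratio does not even contradict monotonicity. Your first route (the $l=1$ family as $k$ grows) you correctly diagnosed as moving in the wrong direction, so as written no decrease of $N_1(m)/m$ is ever exhibited.

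The missing idea, which is what the paper uses, is to change the prime support rather than the size of the cofactor: compare $a=\prod_{q\le p_1}q$ (sparsely totient by Proposition \ref{sparse_totient_element} with $d=1$, $l=0$) with $b=\bigl(\prod_{q\le p_2}q\bigr)/p_1$ for consecutive primes $3<p_1<p_2$, which is sparsely totient by the same proposition with $d=1$, $l=1$, the required inequality $p_2-1<2(p_1-1)$ being supplied by Nagura's theorem that $(n,1.2n)$ contains a prime for $n>25$. Swapping the top prime $p_1$ for $p_2$ raises the totient but lowers the ratio: $\phi(a)=\frac{p_1-1}{p_2-1}\phi(b)<\phi(b)$ while $\frac{a}{\phi(a)}=\frac{p_1}{p_1-1}\cdot\frac{p_2-1}{p_2}\cdot\frac{b}{\phi(b)}>\frac{b}{\phi(b)}$, which is exactly the desired failure of monotonicity (one such pair already suffices). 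If you want to salvage a ``vary the cofactor'' argument, you would have to work with $l\ge 1$ and contrast $p_k\mid d$ with $p_k\nmid d$, i.e.\ again alter the radical --- changing the size of $d$ alone never changes $n/\phi(n)$.
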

\begin{proof}
 For  $p\in \mathbb{P}$, define $$X_{p}=\prod_{q\in\mathbb{P},q\leq p}q.$$ 
 
 Let $p_1$ and $p_2$ be  two consecutive  primes such that $3<p_1<p_2$. Let $a=X_{p_1}$ , $b=\frac{X_{p_2}}{p_1}$, $M_{a}=\phi(a)$ and $M_{b}=\phi(b)$. 
 Then by Proposition \ref{sparse_totient_element} and the definition of $h$, we get
 $$h(M_a)=N_1(M_a)=a  \text{ and }h(M_b)=N_1(M_b)=b.$$
 The proof of the equation $h(M_b)=b$ uses a result of Nagura which states that $(n,1.2n)\cap \mathbb{P}\neq \varnothing$ $\forall$ $n>25.$
 This gives us 
 $$\displaystyle\frac{h(M_a)}{M_a}=\displaystyle\prod_{q\in \mathbb{P}, q\leq p_1}\frac{q}{q-1}=\left(\frac{p_1}{p_1-1}\right)\left(\frac{p_2-1}{p_2}\right) \frac{h(M_b)}{M_b}.$$ 
 Since $p_1<p_2$,  it follows that
 $$\displaystyle\frac{h(M_a)}{M_a}>\frac{h(M_b)}{M_b} \ \text{ but } \ M_{a}=\frac{p_1-1}{p_2-1}M_b<M_b.$$
 Therefore $\frac{h(n)}{n}$ is not an increasing function.
\end{proof}

In contrast to $f_{k, l}$, though $\frac{h(n)}{n}$ is not increasing, we know that $d^{\ast}(h(BN_1))=d^{\ast}(N_1)=0$.
Therefore we observe that if we remove the condition of `increasing map'
on $\frac{f(n)}{n}$, then both the possibilities, namely, Banach density is zero or positive may occur.


\section{\textbf{Explicit construction of elements of $N_2$ and $N_3$}}\label{section3}
\subsection{Proof of Theorem \ref{t3}}
Now we move on to the study of $N_2$ and $N_3$. 
As we know that $N_1 \varsubsetneq N_2$, we give explicit examples of infinite families of elements in $N_2 \setminus N_1$.
Since $\phi(p)=p-1$ and $\phi(p-1)<p-1$ for an odd prime $p$, this implies that $N_3(p-1)=p$. So, $\mathbb{P}\setminus\{2\} \subset N_3.$
We are going to show that infinitely many composite numbers also lie in $N_3$. First, we give the following useful definitions:

\begin{definition}[$k_{q,r}$ and $K_{q,r}$]
 For $q \in \mathbb{P}$ and $r, r_1, r_2 \in \mathbb{N}$, define 
 \begin{align*}
k_{q,r} &\colon = \begin{cases} q^{r}(q-1)+1 &\mbox{if } q^{r}(q-1)+1 \text{ is a prime},\\
  q^{r+1} &\mbox{otherwise }. \end{cases} \\
  K_{q,r} &\colon= 2q^{r+1}, \hspace{0.75cm} R(r_1,r_2) = 2\cdot 3^{r_1}\cdot 5^{r_2}. 
  \end{align*}
\end{definition}

The following lemma gives a description of the elements of $\phi^{-1}(m)$ for $m \equiv 2 \pmod{4}.$ This will be useful to 
construct families of elements in $N_2$ and $N_3$ as indicated above.

\begin{lemma}\label{klee2mod4}
Let $A(m)$ denote the number of solutions to the equation $\phi(x)=m.$
Suppose $m>2$ and $m \equiv 2 \pmod{4}$. Then,
\begin{enumerate}  [ label=(\roman*)]           
\item \label{klee1} Every element of $\phi^{-1}(m)$ is of the form $p^{\alpha}$ or $2p^{\alpha}$ where $p \equiv 3 \pmod{4}.$ 
\item \label{klee2} $A(m) = 0, 2$ or $4$.
\item \label{klee3} If $A(m) = 2$, then $\phi^{-1}(m) = \{p^{\alpha}, 2p^{\alpha}\}$ for some $p \equiv 3 \pmod{4}$, $\alpha \geq 1$ and
 if $A(m) = 4$, then $\phi^{-1}(m) = \{p^{\beta}, 2p^{\beta}, q, 2q\}$ for some $p, q \equiv 3 \pmod{4}, \text{ with } p < q, \ \beta > 1.$
\end{enumerate}
\end{lemma}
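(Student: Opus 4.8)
The plan is to analyze the equation $\phi(x) = m$ with $m \equiv 2 \pmod 4$ by examining the prime factorization of a putative solution $x$ and exploiting the multiplicativity of $\phi$ together with the $2$-adic valuation. First I would write $x = 2^a p_1^{a_1} \cdots p_t^{a_t}$ with the $p_i$ odd primes, so that $\phi(x) = 2^{\max(a-1,0)} \prod_i p_i^{a_i - 1}(p_i - 1)$. Since $v_2(m) = 1$, the total power of $2$ contributed must be exactly $1$. Each odd prime $p_i$ contributes at least one factor of $2$ through $p_i - 1$, so there can be at most one odd prime divisor, i.e. $t \le 1$; and if $p_i \equiv 1 \pmod 4$ it would contribute at least two factors of $2$, which is too many, so the unique odd prime (if present) satisfies $p \equiv 3 \pmod 4$. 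A short case check on the power $a$ of $2$: $a \ge 2$ already forces $v_2(\phi(x)) \ge \max(a-1,0) \ge 1$ from the $2$-part alone, and combined with an odd prime this overshoots, while $a=2$ with no odd prime gives $\phi(x)=2$, excluded since $m>2$; so $a \in \{0,1\}$ and there must be exactly one odd prime $p \equiv 3 \pmod 4$. This yields part \ref{klee1}: every solution is $p^\alpha$ or $2p^\alpha$ with $p \equiv 3 \pmod 4$, and moreover $\phi(p^\alpha) = \phi(2p^\alpha)$, so solutions come in pairs $\{p^\alpha, 2p^\alpha\}$.

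Next, for parts \ref{klee2} and \ref{klee3}, I would count how many distinct pairs $\{p^\alpha, 2p^\alpha\}$ can solve $\phi = m$, i.e. how many pairs $(p,\alpha)$ with $p \equiv 3 \pmod 4$ satisfy $p^{\alpha-1}(p-1) = m$. The key observation is that $p \mid m$ whenever $\alpha \ge 2$, whereas $\alpha = 1$ corresponds to $p = m+1$. So any solution with $\alpha \ge 2$ has $p$ a prime divisor of $m$ with $p \equiv 3 \pmod 4$; for such a $p$, the exponent $\alpha$ is then determined uniquely by $\alpha - 1 = v_p(m)$, provided $(p-1) = m/p^{v_p(m)}$ holds — that is, at most one $\alpha$ works per prime $p$. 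I would then argue that $m \equiv 2 \pmod 4$ has at most one prime divisor $\equiv 3 \pmod 4$ that can arise this way: if $p, p'$ are two such with $p < p'$, write $m = p^{\alpha-1}(p-1) = (p')^{\alpha'-1}(p'-1)$; since $v_2(m)=1$ and $p-1, p'-1$ are each $\equiv 2 \pmod 4$, the factors $p^{\alpha-1}$ and $(p')^{\alpha'-1}$ are odd, so comparing odd parts gives $p^{\alpha-1} \cdot \frac{p-1}{2} = (p')^{\alpha'-1}\cdot\frac{p'-1}{2}$, and a divisibility/size argument forces one of the exponents to be $1$. Hence among solutions, at most one "true prime power" pair $\{p^\beta, 2p^\beta\}$ with $\beta \ge 2$ occurs, plus possibly the pair $\{q, 2q\}$ with $q = m+1$ prime; combined with the constraint that $q$ must be $\equiv 3 \pmod 4$ (automatic since $m \equiv 2 \pmod 4$). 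This gives $A(m) \in \{0, 2, 4\}$, with $A(m)=4$ forcing exactly the shape $\{p^\beta, 2p^\beta, q, 2q\}$; the inequality $p < q$ in that case follows because $q = m+1 = p^{\beta-1}(p-1)+1 > p$ when $\beta \ge 2$.

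The main obstacle I anticipate is the uniqueness argument showing two distinct prime-power solutions $p^\beta, (p')^{\beta'}$ with both exponents exceeding $1$ cannot coexist — this requires carefully tracking the $2$-adic valuation and the odd parts simultaneously, rather than a one-line count. A clean way to handle it: after reducing to odd parts, note $p^{\beta - 1}$ divides $m$ with $p^\beta$ dividing $2m$ but $p^{\beta+1} \nmid 2m$ forcing $\beta$ unique for fixed $p$, and then that $p, p'$ distinct primes each with $v_p, v_{p'} \ge 1$ in $m$ would require $p(p-1) \mid 2m$ and $p'(p'-1)\mid 2m$ with $m = p^{\beta-1}(p-1)$ small — essentially $m/p^{\beta-1} = p-1$ pins $m$ down to one value per prime, and comparing shows the larger prime's relation can only be satisfied with exponent $1$. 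Everything else (the structure of solutions, the pairing, the $p<q$ ordering) is routine bookkeeping with the formula for $\phi$ on prime powers, and I would assemble it into the stated trichotomy. Throughout, the hypothesis $m > 2$ is used only to exclude the degenerate case $\phi(x) = 2$ where $x \in \{3,4,6\}$ would clutter the count.
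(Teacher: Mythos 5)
Your proposal is essentially correct, but it is organized differently from the paper. The paper does not prove parts \ref{klee1} and \ref{klee2} at all: it cites Klee's 1946 note on the equation $\phi(x)=2m$ for those, and only proves \ref{klee3}, namely that in the four-solution case the two odd solutions are $p^{\beta}$ and $q$ with $\gamma=1$ forced by the observation that $\gamma>1$ in $p^{\beta-1}(p-1)=q^{\gamma-1}(q-1)$ would give $q\mid(p-1)$, contradicting $p<q$. You instead give a self-contained argument: the $2$-adic valuation $v_2(m)=1$ forces $x=p^{\alpha}$ or $2p^{\alpha}$ with $p\equiv 3\pmod 4$ (this is exactly Klee's argument, so nothing is lost by doing it yourself), the pairing $\phi(p^{\alpha})=\phi(2p^{\alpha})$ makes $A(m)$ even, and counting pairs reduces to counting solutions $(p,\alpha)$ of $p^{\alpha-1}(p-1)=m$: at most one with $\alpha=1$ (namely $p=m+1$ if prime) and at most one with $\alpha\geq 2$. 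Your route buys a proof independent of the reference and yields \ref{klee2} and \ref{klee3} simultaneously from the count, which is a genuine (if modest) gain in self-containment. One remark on the step you flag as the ``main obstacle'': it is not harder than a one-line count, and you make it sound more delicate than it is. If $p^{\beta-1}(p-1)=(p')^{\beta'-1}(p'-1)=m$ with $\beta,\beta'\geq 2$ and $p\neq p'$, then $p'\mid m=p^{\beta-1}(p-1)$ and $p'\nmid p^{\beta-1}$ give $p'\mid(p-1)$, so $p'<p$, and symmetrically $p<p'$, a contradiction; this is precisely the divisibility observation the paper uses for \ref{klee3}, and your sketch (``$p'(p'-1)\mid 2m$ with $m=p^{\beta-1}(p-1)$'') contains it, so you should just state it crisply rather than appeal to a ``size argument.'' With that sentence made explicit, your outline assembles into a complete proof, including the ordering $p<q$ via $q=m+1=p^{\beta-1}(p-1)+1>p$.
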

\begin{proof}
For the proof of \ref{klee1} and \ref{klee2}, see \cite{klee}. 
(iii) If $p \in \mathbb{P},\ p \equiv 3 \pmod{4}$, then $\phi(p^{\alpha})=\phi(2p^{\alpha})$ for $\alpha \geq 0.$ If $A(m)=2$, 
then from \ref{klee1}, $\phi^{-1}(m)= \{p^{\alpha}, 2p^{\alpha}\}$ for some prime $p \equiv 3 \pmod{4}.$ On the other hand, if $A(m)=4$,
then $\phi^{-1}(m) = \{p^{\beta}, 2p^{\beta}, q^{\gamma}, 2q^{\gamma}\}$ for some $p, q \equiv 3 \pmod{4}$ and $\beta, \gamma \geq 1.$ Now, $p^{\beta}\neq q^{\gamma}$ and $\phi(p^{\beta})=\phi(q^{\gamma}) \Rightarrow p \neq q$. Without loss of
generality, let us assume that $p < q$. This means that $\beta > \gamma.$ Now, $\phi(p^{\beta})=\phi(q^{\gamma}) \Rightarrow p^{\beta - 1}(p-1)=q^{\gamma-1}(q-1)$.
If $\gamma > 1$, then it means that $q \mid (p-1),$ a contradiction to $p < q$. Thus, $\gamma=1.$

Therefore, $\phi^{-1}(m) = \{p^{\beta}, 2p^{\beta}, q, 2q\}$ for some $p, q \equiv 3 \pmod{4}, \text{ with } p < q, \ \beta > 1$ in the case of $A(m)=4.$
\end{proof}
\begin{lemma}\label{N2N3points_small_lemma}
 Let $q$ be a prime greater than $7$. Then there exists a unique odd integer $n\in \{q+2, q+4\}$ such that $n\equiv 0\pmod{3}$, $\gcd(n,q)=1$
 and $\phi(n)< q$.
\end{lemma}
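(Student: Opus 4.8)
The plan is to separate the two parts of the statement: first use only the divisibility-by-$3$ condition to single out the unique candidate $n$ among $q+2$ and $q+4$, and then verify the totient inequality $\phi(n)<q$ for that one candidate. The conditions ``odd'' and ``$\gcd(n,q)=1$'' will turn out to be free of charge, so they play no role in the selection.

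First I would record the observations that hold for \emph{both} of $q+2$ and $q+4$. Since $q$ is an odd prime with $q>7$, both $q+2$ and $q+4$ are odd; and since $q$ is prime with $q\nmid 2$ and $q\nmid 4$, we have $\gcd(q+2,q)=\gcd(q+4,q)=1$. Thus the whole selection is governed by the congruence $n\equiv 0\pmod 3$. As $q$ is prime and $q>7$, we have $q\equiv 1$ or $2\pmod 3$: if $q\equiv 1\pmod 3$ then $q+2\equiv 0$ and $q+4\equiv 2\pmod 3$, while if $q\equiv 2\pmod 3$ then $q+4\equiv 0$ and $q+2\equiv 1\pmod 3$. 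Hence exactly one of $q+2,q+4$ is divisible by $3$; call it $n$. The other element of $\{q+2,q+4\}$ fails $n\equiv 0\pmod 3$, which already yields the uniqueness claim.

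It remains to check $\phi(n)<q$ for this $n$, and the only real input needed is the elementary bound $\phi(n)=n\prod_{p\mid n}\bigl(1-\tfrac1p\bigr)\le n\bigl(1-\tfrac13\bigr)=\tfrac{2n}{3}$, valid precisely because $3\mid n$. Since $n\le q+4$, this gives $\phi(n)\le\tfrac{2}{3}(q+4)$, and $\tfrac{2}{3}(q+4)<q$ is equivalent to $q>8$, which holds because $q$ is a prime exceeding $7$ (so in fact $q\ge 11$). Therefore $\phi(n)<q$, and together with the previous paragraph, $n$ is the unique element of $\{q+2,q+4\}$ meeting all the stated requirements. There is no genuine obstacle here; the only point requiring care is that the bound $\phi(n)\le 2n/3$ is attained when $n$ is a power of $3$, so the hypothesis $q>7$ (equivalently $q\ge 11>8$), rather than merely $q>5$, is exactly what is needed to keep the strict inequality in the worst case $n=q+4$.
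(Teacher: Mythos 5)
Your proposal is correct and follows essentially the same route as the paper: select the unique $n\in\{q+2,q+4\}$ divisible by $3$ (possible since $q\not\equiv 0\pmod 3$), note oddness and coprimality to $q$ are automatic, and use $\phi(n)\leq \tfrac{2n}{3}\leq \tfrac{2(q+4)}{3}<q$ for $q\geq 11$. No gaps.
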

\begin{proof}
 Since $q$ is a prime and $q>7$, one can choose the unique integer $n\in \{q+2, q+4\}$ such that $n\equiv 0\pmod{3}$. Since $q$ is odd,
 $\gcd(q,n)=1$.  Let $n=3^rl$ with $r,l\in \mathbb{N}$ and $3 \not\mid l$. Hence $\phi(n)=2\times 3^{r-1}\phi(l)\leq 2\times 3^{r-1}l=\frac{2n}{3}\leq\frac{2q+8}{3}<q$ if $q\geq 11$. 
\end{proof}

\begin{proposition}   \label{N2N3points}
 Suppose that $r \in \mathbb{N}$ and $q$ is a prime satisfying $q\equiv 3 \pmod{4}$. Then 
 \begin{enumerate} [ label=(\roman*)]
 \item $N_2(q^r(q-1))=K_{q,r}$ and $N_3(q^r(q-1))= k_{q,r}$.
 \item $K_{q,r} \notin N_1$ except when $(q,r)= (3,1)$.
\end{enumerate}
\end{proposition}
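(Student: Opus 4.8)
The plan is to treat the two assertions of the Proposition separately: part (i) follows by feeding Lemma~\ref{klee2mod4} the specific totient $m=q^r(q-1)$, while part (ii) is an extremal argument about $N_1$.

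\textbf{Part (i).} Set $m=q^r(q-1)$. Since $q\equiv 3\pmod 4$ we have $q-1\equiv 2\pmod 4$, so $m=2\cdot q^r\cdot\frac{q-1}{2}$ with $q^r\cdot\frac{q-1}{2}$ odd; hence $m\equiv 2\pmod 4$, $m>2$, and Lemma~\ref{klee2mod4} applies. As $\phi(q^{r+1})=\phi(2q^{r+1})=m$, the set $\phi^{-1}(m)$ already contains $q^{r+1}$ and $2q^{r+1}$, so $A(m)\in\{2,4\}$ by Lemma~\ref{klee2mod4}\ref{klee2}. I would first pin down $A(m)$: I claim $A(m)=4$ exactly when $p_0:=q^r(q-1)+1$ is prime. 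If $p_0$ is prime, a quick congruence check gives $p_0\equiv 3\pmod 4$ and $\phi(p_0)=m$, so $\{p_0,2p_0\}\subseteq\phi^{-1}(m)$ gives two further solutions; conversely, if $A(m)=4$ then by Lemma~\ref{klee2mod4}\ref{klee3} one has $\phi^{-1}(m)=\{p^{\beta},2p^{\beta},q_1,2q_1\}$ with $\beta>1$, and since $q^{r+1}\in\phi^{-1}(m)$ is odd and a prime power with exponent $r+1\ge 2$, it must equal $p^{\beta}$, whence $q_1=q^r(q-1)+1$ is prime. The same ``odd prime power'' remark shows $\phi^{-1}(m)=\{q^{r+1},2q^{r+1}\}$ when $A(m)=2$. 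In all cases $\phi^{-1}(m)\subseteq\{q^{r+1},2q^{r+1},p_0,2p_0\}$, and since $p_0=q^{r+1}-q^r+1<q^{r+1}$, the largest member is $2q^{r+1}=K_{q,r}$ and the smallest is $q^{r+1}$ if $p_0\notin\mathbb{P}$ and $p_0$ otherwise, i.e. exactly $k_{q,r}$; this proves (i).

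\textbf{Part (ii).} I would use the elementary equivalence $n\in N_1\iff\phi(n')>\phi(n)$ for every $n'>n$ (if $n=N_1(m_0)$ then no larger integer satisfies $\phi(n')\le m_0$ and $m_0\ge\phi(n)$, and conversely $N_1(\phi(n))=n$). So it suffices to exhibit, for every admissible $(q,r)\ne(3,1)$, an integer $n'>K_{q,r}=2q^{r+1}$ with $\phi(n')\le\phi(K_{q,r})=q^r(q-1)$. Take $n'$ to be the least multiple of $30$ exceeding $2q^{r+1}$, so $2q^{r+1}<n'\le 2q^{r+1}+30$; since $2,3,5\mid n'$ we get $\phi(n')\le\frac{4}{15}n'\le\frac{4}{15}\bigl(2q^{r+1}+30\bigr)$. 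The inequality $\frac{4}{15}(2q^{r+1}+30)\le q^r(q-1)$ simplifies to $q^r(7q-15)\ge 120$, which holds for all $r\ge 1$ when $q\ge 7$, and for $q=3$ when $r\ge 3$. The only remaining pairs are $(3,1)$ and $(3,2)$: for $(3,2)$ one checks $60>54=K_{3,2}$ with $\phi(60)=16\le 18=\phi(54)$; for $(3,1)$ a short inspection of $\{n:\phi(n)\le 6\}$ gives $N_1(6)=18=K_{3,1}$, so $(3,1)$ is a genuine exception and the statement is sharp.

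\textbf{Main obstacle.} Part (i) is routine once Lemma~\ref{klee2mod4} is in hand. The real content is in (ii): one must show $N_1(q^r(q-1))$ is \emph{strictly} larger than $2q^{r+1}$, and because $2q^{r+1}/\phi(2q^{r+1})=2q/(q-1)$ lies between $2$ and $3$, any replacement $n'$ needs $n'/\phi(n')>3$ — which is precisely why a multiple of $30$ (for which this ratio is at least $15/4$) is the natural gadget. The resulting bound is just tight enough to cover everything except the two smallest pairs $(3,1),(3,2)$, which consequently have to be verified directly, with $(3,1)$ genuinely excluded.
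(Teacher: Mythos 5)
Your proof is correct, and it splits naturally into a part that mirrors the paper and a part that takes a genuinely different route. For (i) you do essentially what the paper does: feed $m=q^r(q-1)\equiv 2\pmod 4$ into Lemma~\ref{klee2mod4}, note $q^{r+1},2q^{r+1}\in\phi^{-1}(m)$, and sort out the $A(m)=2$ versus $A(m)=4$ cases; your observation that $q^{r+1}$ is an odd prime power with exponent $\ge 2$ (so it must be the $p^{\beta}$ of Lemma~\ref{klee2mod4}\ref{klee3}) pins things down a bit more directly than the paper's contradiction argument, but it is the same proof in substance. For (ii) the paper proceeds prime by prime: Lemma~\ref{N2N3points_small_lemma} produces, for $q>7$, an odd multiple of $3$ in $\{q+2,q+4\}$ coprime to $q$ with small totient, and the replacement $2nq^{r-1}>2q^{r}$ with $\phi(2nq^{r-1})\le\phi(2q^{r})$ does the job, with separate ad hoc substitutes for $q=7,5,3$. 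You instead use a single uniform gadget — the least multiple of $30$ above $K_{q,r}$, together with $\phi(n')\le\frac{4}{15}n'$ — reducing everything to the clean inequality $q^r(7q-15)\ge 120$, and then check $(3,2)$ and $(3,1)$ by hand; your criterion $n\in N_1\iff\phi(n')>\phi(n)$ for all $n'>n$ is exactly the (implicit) criterion the paper uses. Your version buys uniformity and dispenses with the auxiliary Lemma~\ref{N2N3points_small_lemma} and the Nagura-style fiddling near $q$, at the cost of a slightly cruder bound that forces the two smallest cases to be verified directly; the paper's construction keeps the replacement integer within a factor very close to $K_{q,r}$, which is in the spirit of Masser–Shiu's local analysis but requires the case split $q>7$, $q=7$, $q=5$, $q=3$. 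Both correctly identify $(3,1)$, i.e.\ $K_{3,1}=18\in N_1$, as the unique exception.
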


\begin{proof}
 Let $m=q^{r}(q-1)$. Then $q\equiv 3\pmod{4}\Rightarrow m\equiv 2 \pmod{4}$. Since $\phi(q^{r+1})=q^{r}(q-1)$, it follows
 that $\phi^{-1}(m)$ is non-empty. Hence, applying Lemma \ref{klee2mod4} we get $\phi^{-1}(m)=\{q_1^{\alpha}, 2q_1^{\alpha}\}$
 or $\{q_2^{\beta}, 2q_2^{\beta}, q_3, 2q_3\}$, where $q_2<q_3$, $m>2, \ \alpha \geq 1$
 and $\beta >1$.

  If $\phi^{-1}(m)=\{q_1^{\alpha}, 2q_1^{\alpha}\}$, then $q_1=q$ and $\alpha=r+1$ as $\phi(q^{r+1})=q^{r}(q-1)= \phi(2q^{r+1})$.
  Hence $N_2(q^r(q-1))=K_{q,r}$ and $N_3(q^r(q-1))= q^{r+1}$ in this case. Since $\phi^{-1}(m)$ doesn't contain primes,
  this means that $m+1=q^r(q-1)+1$ is composite and hence $k_{q, r}=q^{r+1}=N_3(m).$ 

  If suppose $\phi^{-1}(m)=\{q_2^{\beta}, 2q_2^{\beta}, q_3, 2q_3\}$, where $q_2, q_3 \equiv 3 \pmod{4}, q_2<q_3$ and $\beta >1$. 
  If $q^{r}(q-1)+1 \text{ is a prime}$, then $q^{r}(q-1)+1 \in \phi^{-1}(m)$. It follows that $N_3(q^r(q-1))= k_{q,r}$ 
  in this case. Since the only prime in $\phi^{-1}(m)$ is $q_3$, we get $q_3=q^{r}(q-1)+1=N_3(q^r(q-1))$. This means that
  $q_2^{\beta}>q_3$. Now, note that $q^{r+1}> q^r(q-1) + 1$ and $q^{r+1}$ is the only odd composite number in $\phi^{-1}(m)$. 
  Thus, $q_{2}^{\beta}=q^{r+1}$, i.e., $q_2=q$ and $\beta=r + 1$. Therefore, $N_2(q^r(q-1))=2q^{r + 1}$.
  On the other hand, if $q^{r}(q-1)+1 \text{ is not a prime}$, then no element of $\phi^{-1}(m)$ can be prime. 
  But this contradicts the fact that $q_3\in \phi^{-1}(m)$. Therefore,
  $$N_2(q^r(q-1))=K_{q,r} \text{ and }N_3(q^r(q-1))= k_{q,r}.$$

 Coming to the proof of $(ii)$, If $q> 7$, then Lemma \ref{N2N3points_small_lemma} gives the odd integer $n\in \{q+2, q+4\}$ such that $n\equiv 0\pmod{3}$, $\gcd(n,q)=1$
 and $\phi(n)< q$. Now we observe that $2nq^{r-1}>2q^{r}$ but $\phi(2nq^{r-1})\leq \phi(2q^{r})$. Hence $2q^r \not \in N_1 \ \forall ~ q>7$.
 If $q=7$, then $2\times 3^2\times7^{r-1}>2\times 7^r$ but $\phi(2\times 3^2\times7^{r-1})\leq\phi(2\times 7^r)$ and hence $2\times 7^r \not \in N_1 \ \forall ~ r\geq 1$.
  If $q=5$, then $12\times 5^{r-1}>2\times 5^r$ but $\phi(12\times 5^{r-1})\leq\phi(2\times 5^r)$. Hence $2\times 5^r \not \in N_1 \ \forall ~ r\geq 1$.
 Since $\phi(20\times 3^{r-2})<\phi(2\times 3^r)$ but $20\times 3^{r-2}>2\times 3^r$ for $r\geq3$, it follows that $2\times 3^r\not \in N_1 ~\forall ~r\geq 3. $  
\end{proof}

From Proposition \ref{N2N3points}, we see that $K_{q, r} \in N_2\setminus N_1 \ \forall \ r \geq 3$. So, for each $q \equiv 3 \pmod{4}$,
this gives an infinite family of elements in $N_2 \setminus N_1$. But the proposition does not ensure the presence of infinitely many composite
numbers in $N_3$. For this, we require that $k_{q, r}$ is composite for infinitely many $(q, r)$. If $q=3$, note that $2.3^r + 1$
is divisible by $5$ when $r \equiv 3 \pmod{4}$. In other words, $k_{3, r}$ is composite for infinitely many $r$. 
So, from Proposition \ref{N2N3points}, we see that $N_3$ contains infinitely many composite numbers.

Now, we give another infinite family of elements in $N_2\setminus N_1$. First, we state some definitions, four preliminary lemmas and then prove the
two main lemmas which together construct an infinite two-parameter family of elements in $N_2\setminus N_1$.
\begin{definition}[D(A,B)]
Let $A$ and $B$ be two finite subsets of $\mathbb{P}$. Then $D(A,B)$ is defined by
\begin{align*}
 D(A,B):=\left(\prod_{q\in A}\frac{q-1}{q}\right)\left(\prod_{q\in B}\frac{q}{q-1}\right).
\end{align*}
\end{definition}

\begin{lemma} \label{Xnpmainlemma2}
 Suppose that $y,x\in \mathbb{N}\setminus \{1\}$. If $\phi(y)\leq \phi(x)$ and $y>x$, then 
 $D(W(y),W(x))<1$. 
\end{lemma}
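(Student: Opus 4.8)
The plan is to rewrite $D(W(y),W(x))$ in terms of the values of $\phi$ using the standard product formula $\phi(n) = n\prod_{q \in W(n)}\frac{q-1}{q}$. First I would observe that
\[
\prod_{q\in W(y)}\frac{q-1}{q} = \frac{\phi(y)}{y}, \qquad \prod_{q\in W(x)}\frac{q}{q-1} = \frac{x}{\phi(x)},
\]
so that directly from the definition of $D$,
\[
D(W(y),W(x)) = \frac{\phi(y)}{y}\cdot\frac{x}{\phi(x)} = \frac{\phi(y)}{\phi(x)}\cdot\frac{x}{y}.
\]
Then I would invoke the two hypotheses: $\phi(y)\le\phi(x)$ gives $\frac{\phi(y)}{\phi(x)}\le 1$, and $y>x$ gives $\frac{x}{y}<1$. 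Multiplying these (both factors being positive since $x,y\ge 2$) yields $D(W(y),W(x))<1$, as required.

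This argument is essentially a one-line computation, so there is no real obstacle; the only point to be careful about is that $x,y\neq 1$ guarantees $\phi(x),\phi(y)$ and the relevant products are nonzero and positive, so the manipulation of fractions and the final strict inequality are legitimate. No deeper structure of $W(y)$ or $W(x)$ (e.g. whether they overlap) is needed — the cancellation through the $\phi$-formula handles everything.
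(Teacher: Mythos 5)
Your proof is correct and is essentially the same as the paper's: both rewrite $D(W(y),W(x))$ via the product formula as $\frac{\phi(y)}{\phi(x)}\cdot\frac{x}{y}$ (the paper states the equivalent identity $\frac{\phi(y)}{\phi(x)}=\frac{y}{x}D(W(y),W(x))$) and then apply the two hypotheses. No differences worth noting.
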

\begin{proof}
We know that  $$\phi(y)=y\displaystyle\prod_{q\in W(y)}\left(\displaystyle\frac{q-1}{q}\right) \text{ and } \phi(x)=x\displaystyle \prod_{q\in W(x)}\left(\displaystyle\frac{q-1}{q}\right).$$ 
This gives us
  $$1\geq\displaystyle\frac{\phi(y)}{\phi(x)}=\displaystyle \frac{y D(W(y),W(x))}{x}>D(W(y),W(x)),$$
  since $\phi(y)\leq \phi(x)$ and $y>x$.
  \end{proof}
  
\begin{lemma} \label{Xnplemma_D(A,B)1}
Let $A$ and $B$ be two finite subsets of $\mathbb{P}$ such that $|B|\leq|A|$. 
If $\min(B\setminus A)>\max(A)$ or $B\subset A$, then $D(B,A)\geq1$.
\end{lemma}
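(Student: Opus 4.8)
\textbf{Proof proposal for Lemma \ref{Xnplemma_D(A,B)1}.}

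The plan is to reduce the claim $D(B,A)\geq 1$ to a comparison of products of the factors $\frac{q-1}{q}$. Unwinding the definition of $D$ with the roles of $A$ and $B$ swapped, $D(B,A)=\left(\prod_{q\in B}\frac{q-1}{q}\right)\left(\prod_{q\in A}\frac{q}{q-1}\right)$, so $D(B,A)\geq 1$ is equivalent to $\prod_{q\in B}\frac{q-1}{q}\geq\prod_{q\in A}\frac{q-1}{q}$. Since every factor $\frac{q-1}{q}$ lies in $(0,1)$ and the map $t\mapsto\frac{t-1}{t}=1-\frac1t$ is increasing, a product of such factors gets \emph{smaller} when one uses more primes and \emph{larger} when one uses larger primes; the whole proof is an organized exploitation of these two monotonicity facts.

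First I would dispose of the case $B\subset A$: then $\prod_{q\in A}\frac{q-1}{q}=\left(\prod_{q\in B}\frac{q-1}{q}\right)\prod_{q\in A\setminus B}\frac{q-1}{q}\leq\prod_{q\in B}\frac{q-1}{q}$ because the extra factors are each $<1$, which is exactly $D(B,A)\geq 1$. For the main case $\min(B\setminus A)>\max(A)$, I would cancel the common primes: writing $A'=A\setminus B$ and $B'=B\setminus A$, the desired inequality is equivalent to $\prod_{q\in B'}\frac{q-1}{q}\geq\prod_{q\in A'}\frac{q-1}{q}$. From $|B|\leq|A|$ and the fact that $A$ and $B$ share $|A\cap B|$ elements, we get $|B'|\leq|A'|$. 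List $A'=\{a_1<\cdots<a_s\}$ and $B'=\{b_1<\cdots<b_t\}$ with $t\leq s$. The hypothesis $\min(B')>\max(A)$ forces $b_i>\max(A)\geq a_i$ for each $i\leq t$, hence $\frac{b_i-1}{b_i}\geq\frac{a_i-1}{a_i}$ by monotonicity; multiplying these $t$ inequalities and then noting that the leftover factors $\frac{a_{t+1}-1}{a_{t+1}},\dots,\frac{a_s-1}{a_s}$ are all $<1$ yields
\[
\prod_{q\in B'}\frac{q-1}{q}=\prod_{i=1}^{t}\frac{b_i-1}{b_i}\geq\prod_{i=1}^{t}\frac{a_i-1}{a_i}\geq\prod_{i=1}^{s}\frac{a_i-1}{a_i}=\prod_{q\in A'}\frac{q-1}{q},
\]
which is what we wanted.

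There is no real obstacle here beyond careful bookkeeping: the only points needing a word of care are the degenerate cases where $A'$, $B'$, or $A\cap B$ is empty (all handled by the empty-product convention already fixed in the notation section) and the explicit pairing $b_i\leftrightarrow a_i$, which works precisely because $t\leq s$ and every element of $B'$ dominates every element of $A$. I expect the write-up to be short.
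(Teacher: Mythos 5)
Your proposal is correct and follows essentially the same route as the paper: dispose of $B\subset A$ by noting the extra factors $\frac{q}{q-1}$ (equivalently $\frac{q-1}{q}<1$) only help, and in the main case match each element of $B\setminus A$ injectively with a smaller element of $A$ and use the monotonicity of $t\mapsto\frac{t-1}{t}$, discarding the leftover $A$-factors favorably. Your explicit sorted pairing $b_i\leftrightarrow a_i$ on $B'=B\setminus A$, $A'=A\setminus B$ is just a concrete instance of the paper's injective map $f\colon B\rightarrow A$ with $f(x)=x$ on $A\cap B$ and $f(x)\leq x$, so no further comment is needed.
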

\begin{proof}
 If $B \subset A$, then $D(B, A) = \left(\prod_{q \in A\setminus B}\frac{q}{q-1}\right) \geq 1$.  
In the case when $B \not\subset A$, define an injective map $f\colon B\rightarrow A$ such that $f(x)=x$ for $x\in A\cap B$. 
If $\min(B\setminus A)>\max(A)$, it follows that
 $f(x)\leq x~\forall ~x\in B$. Therefore, $$D(B,A)\geq \prod_{x\in B}\left(\frac{(x-1)f(x)}{x(f(x)-1))}\right)=\prod_{x\in B}\left(\frac{xf(x)-f(x)}{xf(x)-x}\right)\geq 1.$$
 \end{proof}
 
\begin{lemma} \label{Rr1r2lemma1}
 Let $a,k\in \mathbb{N}\setminus \{1\}$ and $k\leq a$. Suppose that $ x_1, x_2, \dots , x_k$ are non-negative integers such that at least two of them are postive. Then
 $$\sum_{i=1}^{k}a^{x_i}\leq a^{x_1+x_2+\dots+x_k}.$$
\end{lemma}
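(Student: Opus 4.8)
The plan is to reduce everything to a one-line exponent comparison after a crude term-by-term bound. First I would relabel so that $x_1 \geq x_2 \geq \cdots \geq x_k \geq 0$; this is harmless since the statement is symmetric in the $x_i$. Write $S = x_1 + x_2 + \cdots + x_k$ for the exponent on the right-hand side. Because $a > 1$ and each $x_i \leq x_1$, every summand satisfies $a^{x_i} \leq a^{x_1}$, so
\[
\sum_{i=1}^{k} a^{x_i} \leq k\, a^{x_1} \leq a \cdot a^{x_1} = a^{x_1 + 1},
\]
where the last inequality uses the hypothesis $k \leq a$.

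Next I would compare exponents: it suffices to show $x_1 + 1 \leq S$, equivalently $x_2 + x_3 + \cdots + x_k \geq 1$. This is exactly where the hypothesis ``at least two of the $x_i$ are positive'' is used: after the reordering, the two positive ones are $x_1$ and $x_2$ (the two largest), so $x_2 \geq 1$ and hence $x_2 + \cdots + x_k \geq 1$. Therefore $a^{x_1+1} \leq a^{S}$, and chaining this with the previous display gives $\sum_{i=1}^k a^{x_i} \leq a^S$, as desired.

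There is essentially no serious obstacle here; the only point worth flagging is that the hypothesis on two positive exponents is genuinely necessary (if only one $x_i$ were positive the claimed inequality would fail for any $k \geq 2$), and the write-up should make clear that the reordering step legitimately places a positive exponent in the $x_2$ slot. I would keep the argument to the few lines above, without further case analysis.
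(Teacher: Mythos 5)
Your proof is correct and follows essentially the same route as the paper's: both absorb the factor $k$ via $k\le a$ and use the ``at least two positive'' hypothesis to gain one extra unit in the exponent (the paper bounds each term by $a^{x_1+\cdots+x_k-1}$ directly, while you bound by $a^{x_{\max}}$ and then check $x_{\max}+1\le x_1+\cdots+x_k$, which is only a minor repackaging).
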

\begin{proof}
 Since atleast two of the $k$ integers $ x_1, x_2, \dots , x_k$ are positive, it follows that $a^{x_i}\leq a^{x_1+x_2+\dots+x_k-1}$ $\forall$ $i\in [1,k]$. Therefore 
$$\sum_{i=1}^{k}a^{x_i}\leq ka^{x_1+x_2+\dots+x_k-1}\leq a^{x_1+x_2+\dots+x_k},$$
since $k \leq a$.
 \end{proof}
 
\begin{lemma} \label{Rr1r2lemma2}
Let $x,y,k\in\mathbb{N}$ such that  $x,y,k \geq 2$ and $k\leq\min\{x,y\}$.  Suppose that for each $i\leq k$, $a_i$ and $b_i$ are non-negative integers such that $a_i+b_i\neq 0$. If $a_1+a_2+\dots +a_k=t$ and $b_1+b_2+\cdots +b_k=u$, then
$$  \sum_{i=1}^{k}x^{a_i}y^{b_i}\leq x^ty^u.$$
\end{lemma}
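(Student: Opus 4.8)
\textbf{Proof proposal for Lemma \ref{Rr1r2lemma2}.}

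The plan is to generalize the argument used in Lemma \ref{Rr1r2lemma1} to a two-variable setting, and the natural tool is a double induction — or, more cleanly, a single induction on $k$ combined with the monovariable estimate already available. First I would dispose of the degenerate-looking cases: if exactly one of the pairs $(a_i,b_i)$ is nonzero, the sum on the left is a single term $x^t y^u$ and equality holds trivially, so we may assume at least two indices $i$ contribute genuinely positive exponent mass. The key observation, exactly as in Lemma \ref{Rr1r2lemma1}, is that for each fixed $i$ we have $a_i \le t-1$ whenever some other $a_j>0$, and similarly $b_i \le u-1$ whenever some other $b_j>0$; the subtlety is that the ``slack'' in the $a$-direction and the ``slack'' in the $b$-direction need not both be available at the same index $i$ (one coordinate of $(a_i,b_i)$ could already be maximal). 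So a term-by-term bound $x^{a_i}y^{b_i}\le x^{t-1}y^{u-1}$ is \emph{not} always valid, and one must argue more carefully.

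The cleaner route I would take is induction on $k$. For $k=2$ one checks directly that $x^{a_1}y^{b_1}+x^{a_2}y^{b_2}\le x^{t}y^{u}$ when $t=a_1+a_2$, $u=b_1+b_2$, each of $a_1+b_1, a_2+b_2$ is positive, and $x,y\ge 2$: writing the desired inequality as $x^{a_1}y^{b_1}+x^{a_2}y^{b_2}\le (x^{a_1}y^{b_1})(x^{a_2}y^{b_2})$, it suffices that $\frac{1}{x^{a_2}y^{b_2}}+\frac{1}{x^{a_1}y^{b_1}}\le 1$, and since each denominator is $\ge 2$ (because $x,y\ge2$ and $a_i+b_i\ge 1$), the sum of reciprocals is $\le \tfrac12+\tfrac12 = 1$. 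For the inductive step, suppose the claim holds for $k-1$ (with the hypothesis $k-1\le\min\{x,y\}$, which follows from $k\le\min\{x,y\}$). Given $a_1,\dots,a_k$ and $b_1,\dots,b_k$, I would group the last two pairs into a single pair $(a_{k-1}+a_k,\, b_{k-1}+b_k)$: by the $k=2$ case, $x^{a_{k-1}}y^{b_{k-1}}+x^{a_k}y^{b_k}\le x^{a_{k-1}+a_k}y^{b_{k-1}+b_k}$, and this new pair has nonzero total exponent, so the resulting list of $k-1$ pairs satisfies the inductive hypothesis and yields $\sum_{i=1}^{k-2}x^{a_i}y^{b_i} + x^{a_{k-1}+a_k}y^{b_{k-1}+b_k}\le x^t y^u$. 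Chaining the two inequalities gives $\sum_{i=1}^k x^{a_i}y^{b_i}\le x^t y^u$, as desired. One has to handle the bookkeeping when some of the grouped pairs are zero — e.g. if $a_{k-1}+b_{k-1}=0$ we would instead just drop that index and reduce to $k-1$ terms directly, never worsening the bound — but these are routine case splits.

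The main obstacle, as flagged above, is precisely that a naive term-by-term estimate fails because the exponent slack in $x$ and in $y$ can be ``located'' at different indices; the grouping/induction device is what sidesteps this. I would present the $k=2$ base case via the sum-of-reciprocals reformulation since it makes the role of the hypotheses $x,y\ge 2$ and $a_i+b_i\ne 0$ completely transparent, and then the induction is essentially automatic. It is worth double-checking that the condition $k\le\min\{x,y\}$ is genuinely used — it is needed so that when several (more than two) of the $a_i$ or $b_i$ vanish and we are effectively back in the situation of Lemma \ref{Rr1r2lemma1} with a pure power of $x$ or $y$, the factor of $k$ coming from counting terms is absorbed; in the grouped induction this constraint propagates correctly since reducing $k$ only relaxes it.
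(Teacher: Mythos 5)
Your proof is correct, but it follows a genuinely different route from the paper's. The paper argues by a case split: either both sequences $(a_i)$ and $(b_i)$ contain at most one positive term, which (using $a_i+b_i\neq 0$) forces $k=2$ and reduces the claim to the elementary inequality $v+w\leq vw$ for $v,w\geq 2$; or some sequence, say $(a_i)$, has at least two positive terms, in which case the paper bounds $\sum_i x^{a_i}y^{b_i}\leq \bigl(\sum_i x^{a_i}\bigr)y^u$ and invokes Lemma \ref{Rr1r2lemma1} (this is precisely where $k\leq \min\{x,y\}$ enters). You instead induct on $k$, merging two pairs at a time via the base case $x^{a_1}y^{b_1}+x^{a_2}y^{b_2}\leq (x^{a_1}y^{b_1})(x^{a_2}y^{b_2})$, which is the same inequality $A+B\leq AB$ for $A,B\geq 2$ in disguise. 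Your route bypasses Lemma \ref{Rr1r2lemma1} altogether and — contrary to your closing speculation — never uses the hypothesis $k\leq\min\{x,y\}$, so it actually establishes a slightly stronger statement valid for all $k\geq 2$; the paper's version needs that hypothesis only because it recycles the earlier lemma. Two of your side remarks are vacuous under the stated hypotheses and could be cut: since every pair satisfies $a_i+b_i\neq 0$, the case ``exactly one pair is nonzero'' never occurs, and no grouped pair can be zero (the merged pair has exponent sum at least $2$), so no ``drop the index'' bookkeeping is needed — indeed dropping a zero pair would require a small extra argument anyway, since such a pair contributes $1$ to the sum rather than nothing.
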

\begin{proof}
Given $k \geq 2$. If possible, suppose that both the sequences $(a_i)_{i=1}^{k}$, $(b_i)_{i=1}^{k}$ contain at most one positive integer.
If $k \geq 3$, then there exists $j \in [1, k]$ such that $a_j + b_j = 0$, a contradiction. Hence, $k=2$ and exactly one of
$\{a_1, a_2\}$ and one of $\{b_1, b_2\}$ are positive with $a_i + b_i \neq 0$ for $i \in [1, 2]$. Therefore, we can assume $a_1, b_2>0$
without loss of generality. We need to show that $$x^{a_1} + y^{b_2} \leq x^{a_1}y^{b_2}$$ in this case. Since $x, y \geq 2$ and
$a_1, b_2 \in \mathbb{N}$, it is enough to show that $v+w \leq vw$ for $v,w \in \mathbb{N}\setminus \{1\}$. This happens iff
$v \leq w(v-1)$ iff $v/(v-1) \leq w$ which is true since the left hand side is not greater than 2.

On the other hand, if one of the sequences, say $(a_i)$, has at least two positive elements, then by Lemma \ref{Rr1r2lemma1},
we have $$ \sum_{i=1}^{k}x^{a_i}y^{b_i} \leq \left(\sum_{i=1}^{k}x^{a_i}\right)y^u \leq x^{t}y^{u},$$ since $k\leq \min\{x,y\} \leq x$.
\end{proof}
\begin{definition}[Valuation]
 Let $p$ be a prime number. Then the $p$-valuation on the integers $\mathbb{Z}$ is the map
 $v_{p} \colon \mathbb{Z}\rightarrow \mathbb{N}\cup \{0, \infty\}$ defined
 by $v_{p}(0)=\infty$ and $v_{p}(n)=r$ for $n \neq 0$, where $r$ is the largest non-negative integer such that $p^{r} \mid n.$
\end{definition}
\begin{lemma} \label{Rr1r2lemmaMain1}
Suppose $r_1, r_2, y \in \mathbb{N}$ satisfy $\phi(y)=\phi(R(r_1,r_2)),\ |W(y)|=4$, $v_2(y)=1$, $v_3(y)=0$
and $v_5(y)=0$. Then $y\leq R(r_1,r_2)$. 
\end{lemma}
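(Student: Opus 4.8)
The plan is to analyze the structure of $y$ using Lemma~\ref{klee2mod4}-type reasoning combined with the divisibility constraints imposed by $\phi(y) = \phi(R(r_1,r_2)) = 2^? \cdot 3^{r_1-1} \cdot 5^{r_2-1} \cdot \phi(2) \cdots$—more precisely, $\phi(R(r_1,r_2)) = 2 \cdot 3^{r_1-1} \cdot (3-1) \cdot 5^{r_2-1}\cdot(5-1) = 2^4 \cdot 3^{r_1-1} \cdot 5^{r_2-1}$ (I will recompute this carefully). Write $m := \phi(R(r_1,r_2))$. Since $|W(y)| = 4$, $v_2(y) = 1$, $v_3(y) = 0$, $v_5(y) = 0$, we may write $y = 2 p_1^{a_1} p_2^{a_2} p_3^{a_3}$ where $p_1, p_2, p_3$ are distinct odd primes, none equal to $3$ or $5$, and each $a_i \geq 1$. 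Then
\[
m = \phi(y) = \prod_{i=1}^{3} p_i^{a_i - 1}(p_i - 1).
\]
Comparing prime factorizations: since $m$ is supported only on $\{2,3,5\}$, each $p_i^{a_i-1}$ must be $1$ (as $p_i \notin \{2,3,5\}$), so $a_i = 1$ for all $i$, and $y = 2 p_1 p_2 p_3$ with $m = (p_1 - 1)(p_2 - 1)(p_3 - 1)$.

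Next I would bound $y = 2 p_1 p_2 p_3$ against $R(r_1,r_2) = 2 \cdot 3^{r_1} \cdot 5^{r_2}$ by estimating each $p_i$ in terms of the $3$- and $5$-parts of $p_i - 1$. Each $p_i - 1$ is a product of the form $2^{e_i} 3^{f_i} 5^{g_i}$ with $\sum e_i$, $\sum f_i = r_1 - 1$, $\sum g_i = r_2 - 1$ (plus the contribution of the single factor $2$ in $m$ from $v_2(m)$, distributed among the $e_i$). So $p_i \leq 2^{e_i} 3^{f_i} 5^{g_i} + 1 \leq 2 \cdot 2^{e_i - 1} 3^{f_i} 5^{g_i}$ when $e_i \geq 1$ (handling small cases where some exponents vanish separately), hence roughly $\prod p_i \leq 2^{\sum e_i} 3^{\sum f_i} 5^{\sum g_i} \cdot (\text{small factor})$, and the point is to show $\prod_{i=1}^3 p_i \le 3^{r_1} 5^{r_2}$. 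Here I expect to invoke Lemma~\ref{Rr1r2lemma2} (with $x = 3$, $y = 5$, $k = 3 \leq \min\{r_1, r_2\}$ in the generic case) to convert the sum-type bound $p_i \le 3^{f_i}5^{g_i}\cdot(\text{bounded})$ into the product bound $\prod p_i \le 3^{r_1}5^{r_2}$, after absorbing the $2$-powers.

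The main obstacle will be the bookkeeping of the $2$-part: $v_2(m)$ is small (I believe $v_2(m) = 4$ after the computation), and this power of $2$ must be split among the three factors $p_i - 1$, each of which is even, so $e_i \geq 1$ forces $\sum e_i \geq 3$—this is consistent, but it means the $2$-exponents are tightly constrained and the inequality $p_i \le 2^{e_i}3^{f_i}5^{g_i}+1$ must be converted to $p_i \le 2^{e_i-1}3^{f_i}5^{g_i}\cdot C$ with careful attention to edge cases (e.g. when $f_i = g_i = 0$, or when $e_i = 1$). I would also need to separately dispose of the degenerate situations where $r_1$ or $r_2$ is small (so that $k = 3 > \min\{r_1,r_2\}$ and Lemma~\ref{Rr1r2lemma2} does not directly apply); in those cases the structure of $m$ is rigid enough to check $y \le R(r_1,r_2)$ by hand. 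The rest—using Lemma~\ref{Rr1r2lemma1} or Lemma~\ref{Rr1r2lemma2} to pass from the additive split of exponents to the multiplicative bound—should be routine once the $2$-part accounting is pinned down.
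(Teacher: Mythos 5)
Your structural reduction agrees with the paper's first step and is sound: since $v_3(y)=v_5(y)=0$ and $\phi(y)$ is supported on $\{2,3,5\}$, the three odd prime factors of $y$ occur to the first power, so $y=2p_1p_2p_3$ and $(p_1-1)(p_2-1)(p_3-1)=\phi(R(r_1,r_2))$. From there, however, the proposal has a genuine gap, compounded by an arithmetic slip. Note $\phi(2)=1$, so $\phi(R(r_1,r_2))=2^3\,3^{r_1-1}5^{r_2-1}$, not $2^4\,3^{r_1-1}5^{r_2-1}$; since each $p_i-1$ is even and there are three of them, this forces $v_2(p_i-1)=1$ for every $i$, i.e. $p_i=2\cdot 3^{a_i}5^{b_i}+1$ with $a_i+b_i\ge 1$ (because $p_i\ne 3$), $\sum_i a_i=r_1-1$, $\sum_i b_i=r_2-1$. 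There is no distribution of $2$-powers to bookkeep, and this rigidity is exactly what makes the target constant reachable: you must show $\prod_i\bigl(2\cdot 3^{a_i}5^{b_i}+1\bigr)\le 3^{r_1}5^{r_2}=15\cdot 3^{r_1-1}5^{r_2-1}$.

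The quantitative step you sketch does not get there. The inequality $2^{e_i}3^{f_i}5^{g_i}+1\le 2\cdot 2^{e_i-1}3^{f_i}5^{g_i}$ is false as written (the right-hand side equals $2^{e_i}3^{f_i}5^{g_i}$), and even the intended bound, losing a factor $2$ per prime, only gives $\prod_i p_i\le 2^3\cdot 2^3\cdot 3^{r_1-1}5^{r_2-1}=64\cdot 3^{r_1-1}5^{r_2-1}$, far above the needed $15\cdot 3^{r_1-1}5^{r_2-1}$. Moreover Lemma \ref{Rr1r2lemma2} cannot ``convert a per-factor bound into a product bound'': it bounds the \emph{sum} $\sum_i 3^{a_i}5^{b_i}$ by $3^{t}5^{u}$, and its hypothesis is $k\le\min\{x,y\}=\min\{3,5\}$, which holds automatically for $k=3$, so your worry about small $r_1,r_2$ is a misreading. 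The paper uses it only after expanding the product: $\prod_i\bigl(2\cdot3^{a_i}5^{b_i}+1\bigr)=8\cdot 3^{r_1-1}5^{r_2-1}+4\cdot3^{r_1-1}5^{r_2-1}\sum_i 3^{-a_i}5^{-b_i}+2\sum_i 3^{a_i}5^{b_i}+1$; the quadratic term is at most $4\cdot3^{r_1-1}5^{r_2-1}$ because $\sum_i 3^{-a_i}5^{-b_i}\le 1$ (each $a_i+b_i\ge1$), the linear term is at most $2\cdot3^{r_1-1}5^{r_2-1}$ by Lemma \ref{Rr1r2lemma2}, and $(8+4+2+1)\,3^{r_1-1}5^{r_2-1}=R(r_1,r_2)/2$. (A purely multiplicative argument could be salvaged, but only with a sharper per-factor estimate than yours: $2\cdot3^{a_i}5^{b_i}+1\le\frac{7}{3}\,3^{a_i}5^{b_i}$ since $3^{a_i}5^{b_i}\ge3$, and $(7/3)^3<15$.) As it stands, the proposal identifies the right structure but does not prove the inequality.
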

\begin{proof}
 Since $|W(y)|=4$, $v_2(y)=1$, $v_3(y)=0$
and $v_5(y)=0$, we can write $y=2\left(q_1^{v_{q_1}(y)}q_2^{v_{q_2}(y)}q_3^{v_{q_3}(y)}\right)$ where $q_1,q_2,q_3$ are distinct primes greater than $6$
 and $v_q(y)\geq 1$ for $q\in\{q_1,q_2,q_3\}$. Since
 $\phi(y)=\phi(R(r_1,r_2))$, it follows that $v_{q_1}(y)=v_{q_2}(y)=v_{q_3}(y)=1$ and hence 
 $$\left(\frac{q_1-1}{2}\right)\left(\frac{q_2-1}{2}\right)\left(\frac{q_3-1}{2}\right)=5^{r_2-1}3^{r_1-1}.$$
 Therefore, for each $i\in\{1,2,3\}$, we can write
 $q_i=2\cdot3^{a_i}5^{b_i}+1$ such that 
$$a_1+a_2+a_3=r_1-1 , \ b_1+b_2+b_3=r_2-1,$$ $$a_1+b_1\neq 0, \ a_2+b_2\neq0, \ a_3+b_3\neq0,$$ $$a_1,a_2,a_3,b_1,b_2,b_3\geq 0.$$
\begin{align}
\therefore y&=2\left(2\cdot 3^{a_1}5^{b_1}+1\right)\left(2\cdot 3^{a_2}5^{b_2}+1\right)\left(2\cdot 3^{a_3}5^{b_3}+1\right)\nonumber\\
&=2\left(2^33^{r_1-1}5^{r_2-1}+2^2\left( \sum_{i=1}^{3}3^{r_1-a_i-1}5^{r_2-b_i-1}\right)+2\left( \sum_{i=1}^{3}3^{a_i}5^{b_i}\right)+1\right)\nonumber\\
&=2\left(2^33^{r_1-1}5^{r_2-1}+2^23^{r_1-1}5^{r_2-1}\left( \sum_{i=1}^{3}3^{-a_i}5^{-b_i}\right)+2\left( \sum_{i=1}^{3}3^{a_i}5^{b_i}\right)+1\right) \label{Rr1r2eq3}.
\end{align}
Since $a_i+b_i\geq 1$ for each $i=1,2,3$, we have 
\begin{equation}
\sum_{i=1}^{3}3^{-a_i}5^{-b_i}\leq \sum_{i=1}^{3}3^{-(a_i+b_i)}\leq 1. \label{Rr1r2eq4}
\end{equation}
Applying Lemma \ref{Rr1r2lemma2}, we have
\begin{equation}
  \sum_{i=1}^{3}3^{a_i}5^{b_i}\leq 3^{r_1-1}5^{r_2-1} \label{Rr1r2eq5}.
\end{equation}
Inserting the inequalities \eqref{Rr1r2eq4} and \eqref{Rr1r2eq5} into the right side of \eqref{Rr1r2eq3}, we get
\begin{align*}
y&\leq2\left(2^33^{r_1-1}5^{r_2-1}+2^23^{r_1-1}5^{r_2-1}+2\cdot3^{r_1-1}5^{r_2-1}+1\right) \\
&\leq2\cdot 3^{r_1-1}5^{r_2-1}(8+4+2+1)= R(r_1,r_2).
\end{align*}
\end{proof}
\begin{lemma}
\label{Rr1r2lemmaMain2}
Suppose $r_1, r_2, y\in \mathbb{N}$ and $r_2>2$ satisfy $\phi(y)=\phi(R(r_1,r_2))$, $|W(y)|=4$, $v_2(y)=1$, $v_3(y)\geq 1$
and $v_5(y)=0$. Then $y\leq R(r_1,r_2)$.
\end{lemma}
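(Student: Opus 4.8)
The plan is to mirror the structure of the proof of Lemma~\ref{Rr1r2lemmaMain1}, adapting it to the case $v_3(y)\geq 1$. First I would write $y = 2\cdot 3^{c} q_1^{v_{q_1}(y)} q_2^{v_{q_2}(y)}$ where now $c = v_3(y)\geq 1$ and $q_1, q_2$ are distinct primes greater than $5$ (and different from $3$). Since $|W(y)|=4$ and $v_2(y)=1$, $v_5(y)=0$, this accounts for all prime divisors. From $\phi(y)=\phi(R(r_1,r_2)) = 2^2\cdot 3^{r_1}\cdot 5^{r_2}$ (noting $\phi(2\cdot 3^{r_1}5^{r_2}) = 3^{r_1-1}\cdot 2 \cdot 5^{r_2-1}\cdot 4 = 2^2 3^{r_1-1}5^{r_2-1}$ — I'd be careful to recompute this constant exactly) one reads off that the exponents $v_{q_i}(y)$ must all equal $1$, else a prime $q_i > 5$ would divide the totient. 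Indeed if $v_{q_i}(y)\ge 2$ then $q_i \mid \phi(y)$, impossible since $\phi(R(r_1,r_2))$ has only the prime factors $2,3,5$. So $v_{q_1}(y)=v_{q_2}(y)=1$.

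Next, the equation $\phi(y) = \phi(R(r_1,r_2))$ becomes $3^{c-1}\cdot 2 \cdot (q_1-1)(q_2-1) = 2^2 3^{r_1-1}5^{r_2-1}$, hence $(q_1-1)(q_2-1) = 2\cdot 3^{r_1-c}5^{r_2-1}$. Since $q_1, q_2$ are odd primes $>5$, each $q_i - 1$ is even, so write $q_i - 1 = 2\cdot 3^{a_i}5^{b_i}$; but then $(q_1-1)(q_2-1) = 4\cdot 3^{a_1+a_2}5^{b_1+b_2}$, which is divisible by $4$, while the right side $2\cdot 3^{r_1-c}5^{r_2-1}$ is divisible by $2$ but not $4$. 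This forces a contradiction — unless one of the $q_i$ is such that $q_i - 1$ contributes only a single factor of $2$; wait, $q_i-1$ is always even for odd $q_i$, so $(q_1-1)(q_2-1)$ is always divisible by $4$. Therefore I would conclude that this forces $r_1 = c$ is impossible in the naive parametrization, meaning I need to allow one of $q_1 - 1$ to absorb an extra power of $2$... Actually the cleaner resolution: one of the two primes, say $q_1$, can equal $2\cdot 3^{a}5^{b}+1$ only when the $2$-adic valuation works out, and the constraint $4 \mid (q_1-1)(q_2-1)$ versus $2 \parallel \phi(R)$ means that in fact we must have $q_i - 1 = 2 \cdot (\text{odd})$ for both, giving $4 \mid (q_1-1)(q_2-1)$, contradicting $v_2(\phi(R(r_1,r_2))) $. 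Here I'd double-check $v_2(\phi(R(r_1,r_2)))$: $\phi(2) = 1$, $\phi(3^{r_1}) = 2\cdot 3^{r_1-1}$, $\phi(5^{r_2}) = 4 \cdot 5^{r_2-1}$, product has $v_2 = 0 + 1 + 2 = 3$. So $v_2(\phi(R)) = 3$, and $v_2((q_1-1)(q_2-1)) \geq 2$ with the remaining factor $3^{c-1}\cdot 2$ from the $2$ and $3^c$ part contributing exactly one more factor of $2$, totalling $\geq 3$; so no contradiction — the $2$-adic bookkeeping is consistent and the parametrization $q_i - 1 = 2\cdot 3^{a_i}5^{b_i}$ with $v_2(q_i-1)=1$ is exactly right when $v_2(\phi(R))=3 = 1(\text{from }3^c\text{ part}) + 1 + 1$.

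With the corrected bookkeeping, I get $a_1 + a_2 = r_1 - c$, $b_1 + b_2 = r_2 - 1$, and $a_i + b_i \geq 1$ (since $q_i > 5$ forces $(q_i-1)/2 > 2$, hence not both $a_i, b_i$ zero — actually $(q_i-1)/2 \geq 3$ so $q_i \geq 7$, consistent). Then expand
\begin{align*}
y &= 2\cdot 3^c (2\cdot 3^{a_1}5^{b_1}+1)(2\cdot 3^{a_2}5^{b_2}+1)\\
&= 2\cdot 3^c\left(4\cdot 3^{a_1+a_2}5^{b_1+b_2} + 2\cdot 3^{a_1}5^{b_1} + 2\cdot 3^{a_2}5^{b_2} + 1\right).
\end{align*}
Using $a_1 + a_2 = r_1 - c$ and $b_1 + b_2 = r_2 - 1$, the leading term is $2\cdot 3^c \cdot 4 \cdot 3^{r_1-c}5^{r_2-1} = 8\cdot 3^{r_1}5^{r_2-1}$, and $R(r_1,r_2) = 2\cdot 3^{r_1}5^{r_2} = 10\cdot 3^{r_1}5^{r_2-1}$, so I need the remaining terms $2\cdot 3^c(2\cdot 3^{a_1}5^{b_1} + 2\cdot 3^{a_2}5^{b_2} + 1)$ to be at most $2\cdot 3^{r_1}5^{r_2-1}$. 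Here I would invoke Lemma~\ref{Rr1r2lemma2} (with $x=3$, $y=5$, $k=2$, which needs $2 \leq \min\{3,5\}$ — fine, and $a_i + b_i \neq 0$) to bound $\sum_{i=1}^2 3^{a_i}5^{b_i} \leq 3^{a_1+a_2}5^{b_1+b_2} = 3^{r_1-c}5^{r_2-1}$, provided $c \leq r_1$; if instead $c > r_1$ I'd need a separate (easier) argument or note it cannot happen since $a_1+a_2 = r_1 - c \geq 0$. The main obstacle I anticipate is exactly this last inequality-juggling: showing $2\cdot 3^c\big(2\cdot 3^{r_1-c}5^{r_2-1} + 1\big) \leq 2\cdot 3^{r_1}5^{r_2-1}$, i.e., $4\cdot 3^{r_1}5^{r_2-1} + 2\cdot 3^c \leq 6 \cdot 3^{r_1}5^{r_2-1}$ — wait that needs rechecking; more carefully, after using the lemma, $2\cdot 3^c(\,2\cdot 3^{r_1-c}5^{r_2-1} + 1) = 4\cdot 3^{r_1}5^{r_2-1} + 2\cdot 3^c$, and since $c \leq r_1$ and $r_2 > 2$ gives $5^{r_2-1} \geq 5$, we have $2\cdot 3^c \leq 2\cdot 3^{r_1} \leq 2\cdot 3^{r_1}5^{r_2-1}$, hence the sum is $\leq 6\cdot 3^{r_1}5^{r_2-1} \leq 10\cdot 3^{r_1}5^{r_2-1} = R(r_1,r_2)$... but I must add this to the $8\cdot 3^{r_1}5^{r_2-1}$ leading term, giving $y \leq 8\cdot 3^{r_1}5^{r_2-1} + 6\cdot 3^{r_1}5^{r_2-1} = 14\cdot 3^{r_1}5^{r_2-1} > 10\cdot 3^{r_1}5^{r_2-1}$, which is \emph{too weak}. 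So the genuinely delicate point is getting a sharper bound on the cross terms: one needs $2\cdot 3^c \cdot 2 \sum 3^{a_i}5^{b_i} + 2\cdot 3^c \leq 2\cdot 3^{r_1}5^{r_2-1}$, and the $\sum 3^{a_i}5^{b_i} \leq 3^{r_1-c}5^{r_2-1}$ bound must actually be improved (e.g.\ using that $a_i+b_i \geq 1$ so each term is at most $3^{a_1+a_2+b_1+b_2 - 1}$-type savings, or treating small cases where $r_2 - 1$ is small separately). This is where the hypothesis $r_2 > 2$ gets used — it provides the slack $5^{r_2-1}\geq 25$ needed to absorb the $+1$ and constant terms. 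I would finish by assembling these bounds to conclude $y \leq R(r_1, r_2)$.
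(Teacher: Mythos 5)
Your setup is exactly the paper's: the factorization $y = 2\cdot 3^{c}q_1q_2$ with $v_{q_1}(y)=v_{q_2}(y)=1$, the $2$-adic bookkeeping $v_2(\phi(R(r_1,r_2)))=3$, and the parametrization $q_i = 2\cdot 3^{a_i}5^{b_i}+1$ with $a_1+a_2=r_1-c$, $b_1+b_2=r_2-1$, $a_i+b_i\geq 1$ are all correct and match the paper. But the proof has a genuine gap at precisely the point you flag: the bound you obtain for the non-leading terms is insufficient, and no valid replacement is supplied. Applying Lemma \ref{Rr1r2lemma2} uniformly gives $3^{c}\sum_i 3^{a_i}5^{b_i}\leq 3^{r_1}5^{r_2-1}$, so the cross terms alone contribute up to $4\cdot 3^{r_1}5^{r_2-1}$; added to the leading $8\cdot 3^{r_1}5^{r_2-1}$ this already exceeds $R(r_1,r_2)=10\cdot 3^{r_1}5^{r_2-1}$ even before the $+2\cdot 3^{c}$ term, so this route cannot close (your $14$ versus $10$ is not a bookkeeping slip but a structural failure of the crude bound: the slack here is only $2$ units of $3^{r_1}5^{r_2-1}$, unlike Lemma \ref{Rr1r2lemmaMain1} where the leading coefficient $8$ sits against $15$). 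Saying ``one needs a sharper bound on the cross terms'' and gesturing at possible savings is not a proof of that sharper bound.

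The missing idea is the case analysis the paper performs on whether the $3$-exponents $a_1,a_2$ vanish, which is what converts the $a_i+b_i\geq 1$ constraint into actual savings. Writing the cross terms as $3^{r_1-a_2}5^{b_1}+3^{r_1-a_1}5^{b_2}$: if $a_1,a_2>0$ then each cross term loses a factor $3$ and moreover $c\leq r_1-2$, which caps the constant term; if $a_1=a_2=0$ then $c=r_1$ but both $b_i\geq 1$, so each $5$-exponent is at most $r_2-2$ and the cross terms lose a factor $5$; if exactly one $a_i$ is zero one splits further on the corresponding $b$'s, and it is in these latter cases that $r_2>2$ is genuinely needed (for $r_2=2$ the inequality can fail, e.g. the bound reduces to $14\leq 2\cdot 5^{r_2-1}$ in one subcase). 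Each case then closes with Lemma \ref{Rr1r2lemma1} and elementary estimates. Without carrying out this (or an equivalent) finer analysis, the conclusion $y\leq R(r_1,r_2)$ is not established.
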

\begin{proof}
 Since $|W(y)|=4$, $v_2(y)=1$, $v_3(y)\geq 1$
and $v_5(y)=0$, we can write $y=2\left(3^{v_3(y)}q_1^{v_{q_1}(y)}q_2^{v_{q_2}(y)}\right)$ where $q_1,q_2$ are distinct primes greater that $6$ and $v_q(y)\geq 1$ for $q\in\{q_1,q_2,3\}$. Since
 $\phi(y)=\phi(R(r_1,r_2))$, it follows that $v_{q_1}(y)=v_{q_2}(y)=1, v_3(y)\leq r_1$ and hence 
 $$\left(\frac{q_1-1}{2}\right)\left(\frac{q_2-1}{2}\right)=5^{r_2-1}3^{r_1-v_3(y)}.$$
 Therefore we can write
 $q_1=2\cdot 3^{a_1}5^{a_2}+1$ and $q_2=2\cdot 3^{b_1}5^{b_2}+1$ such that $a_1+b_1=r_1-v_3(y)$, $a_2+b_2=r_2-1,\ a_1+a_2\neq0,\ b_1
 +b_2\neq 0$ and $a_1,a_2,b_1,b_2\geq 0$. 
 \begin{align}
  \therefore y&=2\cdot 3^{v_3(y)}\left(2\cdot 3^{a_1}5^{a_2}+1\right)\left(2\cdot 3^{b_1}5^{b_2}+1\right) \nonumber\\
  &=2\left(2^2\cdot 3^{a_1+b_1+v_3(y)}5^{a_2+b_2}+2\left(3^{a_1+v_3(y)}5^{a_2}+ 3^{b_1+v_3(y)}5^{b_2}\right)+3^{v_3(y)}\right).\nonumber
 \end{align}
 Inserting  $a_1+b_1=r_1-v_3(y)$ and  $a_2+b_2=r_2-1$  in the above equation, we get
 \begin{equation}
 y = 2\left(2^2\cdot 3^{r_1}5^{r_2-1}+2\left(3^{r_1-b_1}5^{a_2}+ 3^{r_1-a_1}5^{b_2}\right)+3^{v_3(y)}\right). \label{Rr1r2eq1}
 \end{equation}
 
 From Lemma \ref{Rr1r2lemma1}, we have 
 \begin{equation}5^{a_2}+5^{b_2}\leq\begin{cases}
                    5^{r_2-1} &\mbox{if } a_2,b_2>0,\\
                    1+5^{r_2-1} &\mbox{else. } 
                   \end{cases} \label{Rr1r2eq2}
\end{equation}
We are now going to consider the following cases depending on the value of $a_1$ and $b_1$.

\textbf{\underline{Case 1}:} If $a_1$ and $b_1$ are positive, then $a_1+b_1\geq 2$. Using this along with the conditions $a_1+b_1+v_3(y)=r_1$ and $v_3(y)\geq 1$, we get $r_1\geq 3$ and $v_3(y)\leq r_1-2$.
Applying these in the right side of equation \eqref{Rr1r2eq1}, we have 
\begin{align*}
 y&\leq 2\left(2^2\cdot 3^{r_1}5^{r_2-1}+2\cdot3^{r_1-1}\left(5^{a_2}+ 5^{b_2}\right)+3^{r_1-2}\right). 
\end{align*}
Inserting the value of $5^{a_2}+5^{b_2}$ from \eqref{Rr1r2eq2} in the above inequality, we have
\begin{align*}
y&\leq 2\cdot 3^{r_1-2}\left(36\cdot 5^{r_2-1}+6\left(1+5^{r_2-1}\right)+1\right)\\ 
&\leq 2\cdot 3^{r_1-2}\left(45\cdot 5^{r_2-1}-3\cdot 5^{r_2-1}+7\right)\\
&\leq 2\cdot 3^{r_1}5^{r_2}=R\left(r_1,r_2\right) \ \text{  for each } r_2\geq 2.
\end{align*}

\textbf{\underline{Case 2}:} If  $a_1=0$ and $b_1=0$, then $v_3\left(y\right)=r_1$ due to the fact that 
$a_1+b_1=r_1-v_3\left(y\right)$. Applying these in equation \eqref{Rr1r2eq1}, we have
$$y\leq 2\cdot 3^{r_1}\left(4\cdot 5^{r_2-1}+2\left(5^{a_2}+ 5^{b_2}\right)+1\right). $$
Since $a_1+a_2\neq0$ and $b_1+b_2\neq 0$, it follows that $a_2,b_2>0$. Hence $a_2,b_2\leq r_2-2$ because $a_2+b_2=r_2-1$. 
Using this in the previous inequality gives us
\begin{align*}
 y&\leq 2\cdot 3^{r_1}\left(4\cdot 5^{r_2-1}+4\cdot 5^{r_2-2}+1\right)\\
 &\leq 2\cdot 3^{r_1}\left(5^{r_2}-5^{r_2-2}+1\right)\leq R\left(r_1,r_2\right), 
\end{align*}
since $r_2=1+a_2+b_2\geq 3$.

\textbf{\underline{Case 3}:} The remaining cases are in which exactly one of $a_1$ and $b_1$ is zero. Without loss of generality, assume that $a_1=0$ and $b_1\neq 0$.

If $b_2\geq 1$, we get $r_2\geq 2$ and $a_2\leq r_2-2$, because $a_2+b_2=r_2-1$. Since $a_1=0$ and $a_1+a_2\neq0$,
we have $a_2\geq 1$ and hence $b_2\leq r_2-2$. Equation \eqref{Rr1r2eq1} above gives us
\begin{align*}
y&\leq 2\left(2^2\cdot 3^{r_1}5^{r_2-1}+2\left(3^{r_1-1}5^{r_2-2}+ 3^{r_1}5^{r_2-2}\right)+3^{r_1}\right)\\
 &\leq 2\cdot 3^{r_1}\left(2^2\cdot 5^{r_2-1}+ 4 \cdot 5^{r_2-2}+1\right) \\
 &\leq 2\cdot 3^{r_1}\left(2^2\cdot 5^{r_2-1}+ 5^{r_2-1}-5^{r_2-2}+1\right)\\
 &\leq 2\cdot 3^{r_1}\left(5^{r_2}-5^{r_2-2}+1\right)\\
 &\leq 2\cdot 3^{r_1}5^{r_2}, \text{ since } r_2\geq 2. 
 \end{align*}

 Now, in the case $b_2=0$, we have $a_2=r_2-1$. Since $a_1=0$ and $a_2+a_1\neq 0$, it follows that $a_2\geq 1$ and hence $r_2\geq2$. Then
\eqref{Rr1r2eq1} gives us
 \begin{align*}
y&\leq 2\left(2^2\cdot 3^{r_1}5^{r_2-1}+2\left(3^{r_1-1}5^{r_2-1}+ 3^{r_1}\right)+3^{r_1}\right)\\
&\leq 2\left(3^{r_1}5^{r_2}-3^{r_1-1}5^{r_2-1}+3^{r_1+1}\right) \leq R\left(r_1,r_2\right),\text{ since } r_2 > 2.
 \end{align*}

Hence $y\leq R(r_1,r_2)$ for $r_1, r_2 \in \mathbb{N},\ r_2>2.$
\end{proof}

\begin{proposition}\label{RinN2}
$R(r_1,r_2)$ lies in $N_2$ for each $r_1, r_2 \in \mathbb{N},\ r_2>2.$
\end{proposition}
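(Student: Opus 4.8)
The plan is to prove the stronger statement that every $y\in\phi^{-1}(m)$ satisfies $y\le R(r_1,r_2)$, where $m:=\phi(R(r_1,r_2))=2^{3}\cdot 3^{r_1-1}\cdot 5^{r_2-1}$; since $R(r_1,r_2)$ itself lies in $\phi^{-1}(m)$, this forces $N_2(m)=R(r_1,r_2)$, hence $R(r_1,r_2)\in N_2$. The reason the constant $\tfrac{15}{4}$ governs everything is that $R(r_1,r_2)=\tfrac{15}{4}m$ while any $y\in\phi^{-1}(m)$ equals $m\prod_{p\mid y}\tfrac{p}{p-1}$, so the inequality is equivalent to $\prod_{p\mid y}\tfrac{p}{p-1}\le\tfrac{15}{4}=\tfrac21\cdot\tfrac32\cdot\tfrac54$; rather than estimate this product uniformly, I would classify $y$ by its factorization shape. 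First note that if $y$ is odd then $\phi(2y)=\phi(y)=m$ with $2y>y$, so it suffices to bound the even elements of $\phi^{-1}(m)$ (an odd $y$ then satisfies $y<2y\le R(r_1,r_2)$).

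For even $y$ write $y=2^{a}3^{b}5^{c}\prod_{j=1}^{k}q_j$ with $a\ge 1$ and $q_1,\dots,q_k$ distinct primes $\ge 7$. From $\phi(y)=m$ one gets two elementary facts: each $q_j$ satisfies $q_j-1=2^{i_j}3^{a_j}5^{b_j}$ with $1\le i_j\le 3$ (because $q_j\nmid m$ forces $v_{q_j}(y)=1$ and $q_j-1\mid m$), and comparing $2$-adic valuations gives the budget identity
\[(a-1)+\mathbf{1}_{\{b\ge1\}}+2\cdot\mathbf{1}_{\{c\ge1\}}+\sum_{j=1}^{k}i_j=3.\]
Since each $i_j\ge1$ this forces $a\le4$, $k\le3$ and $|W(y)|\le4$, leaving only a short list of shapes; comparing $3$- and $5$-adic valuations further gives $\sum_j a_j\le r_1-1$ and $\sum_j b_j=r_2-1-\max(c-1,0)$, so when $5\nmid y$ we have $\sum_j b_j=r_2-1\ge2$ and in particular $k\ge1$. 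On this list exactly two shapes are ``large'': $y=2q_1q_2q_3$ (that is $|W(y)|=4$, $v_2(y)=1$, $v_3(y)=v_5(y)=0$) and $y=2\cdot3^{b}q_1q_2$ with $b\ge1$ (that is $|W(y)|=4$, $v_2(y)=1$, $v_3(y)\ge1$, $v_5(y)=0$); these are settled verbatim by Lemma \ref{Rr1r2lemmaMain1} and Lemma \ref{Rr1r2lemmaMain2} respectively. The hypothesis $r_2>2$ is genuinely needed in the second of these -- for $r_2=2$ the proposition fails, e.g.\ $R(2,2)=450\notin N_2$ because $\phi(462)=\phi(450)=120$.

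Every remaining shape -- necessarily one with $|W(y)|\le3$ -- I would dispatch by a direct computation, organized by the values of $v_2(y)$ and $v_5(y)$: substitute $q_j=2^{i_j}3^{a_j}5^{b_j}+1$ into $y=2^{a}3^{b}5^{c}\prod_j q_j$, expand, and bound the cross-terms $\sum_j 3^{a_j}5^{b_j}$ (and occasionally $\sum_j 3^{-a_j}5^{-b_j}\le1$) via Lemma \ref{Rr1r2lemma2} together with $\sum_j a_j\le r_1-1$ and $\sum_j b_j\le r_2-1$; in each of the finitely many subcases one gets $y\le R(r_1,r_2)$ with slack, needing only $m\ge8$ and, in a few of them, $r_2>2$. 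The main obstacle I anticipate is organizational rather than conceptual: making the split by factorization shape provably exhaustive and verifying each subcase really stays at or below $R(r_1,r_2)$. Once the shape is fixed the arithmetic is routine; the one delicate point is isolating the two shapes that require the full strength of Lemmas \ref{Rr1r2lemmaMain1} and \ref{Rr1r2lemmaMain2}, and hence seeing exactly where the hypothesis $r_2>2$ is used.
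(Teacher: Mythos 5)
Your proposal is correct and its core coincides with the paper's: reduce to even $y$ with $\phi(y)=\phi(R(r_1,r_2))$, use $v_2(\phi(y))=3$ to force $|W(y)|\leq 4$, and observe that the only genuinely hard shapes are $y=2q_1q_2q_3$ and $y=2\cdot 3^{b}q_1q_2$ (equivalently $|W(y)|=4$, $v_2(y)=1$, $v_5(y)=0$, split by whether $3\mid y$), which are exactly Lemmas \ref{Rr1r2lemmaMain1} and \ref{Rr1r2lemmaMain2}; your $2$-adic ``budget identity'' is a clean way to make this reduction exhaustive, and your observation that $462=2\cdot3\cdot7\cdot11$ kills $R(2,2)$ is a nice confirmation that $r_2>2$ is essential precisely in the Lemma \ref{Rr1r2lemmaMain2} shape. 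Where you diverge is in the residual case $|W(y)|\leq 3$: you propose a finite case-by-case expansion in the spirit of the two main lemmas (which does work -- each subcase closes with room to spare -- but carries exactly the organizational burden you anticipate), whereas the paper dispatches all of these shapes in one stroke via the monotonicity device $D(A,B)$: since $W(R(r_1,r_2))=\{2,3,5\}$ and every prime outside $\{2,3,5\}$ exceeds $5$, Lemma \ref{Xnplemma_D(A,B)1} gives $D(W(y),W(R(r_1,r_2)))\geq 1$ whenever $|W(y)|\leq 3$, and the contrapositive of Lemma \ref{Xnpmainlemma2} then yields $y\leq R(r_1,r_2)$ with no enumeration of shapes and no use of $r_2>2$ (so, contrary to your parenthetical, none of the $|W(y)|\leq 3$ subcases actually needs that hypothesis). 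Your route buys explicitness and a self-contained verification of exhaustiveness; the paper's route buys brevity and isolates $r_2>2$ as being needed only in Lemma \ref{Rr1r2lemmaMain2}.
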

\begin{proof}
 Let $y$ be an even number such that $\phi(y)=\phi(R(r_1,r_2))$. Since $v_2(\phi(R(r_1,r_2)))=3$, it follows that 
 $v_2(\phi(y))=3$. This means that $y$ can have atmost $4$ prime factors. If $|W(y)|\leq 3$, then $$D(W(y), W(R(r_1,r_2)))\geq 1 $$
 by Lemma \ref{Xnplemma_D(A,B)1}. This gives $y\leq R(r_1,r_2)$ by Lemma \ref{Xnpmainlemma2}. Now, we consider the case
 $|W(y)|=4$. Since $v_2(\phi(y))=3$, it follows in this case that $v_2(y)=1$.
  
  Suppose $v_5(y)\geq 1$, then $v_2(\phi(y))\geq 4$. It follows that $v_2(\phi(R(r_1,r_2)))\geq 4$ which contradicts the fact that
   $v_2(\phi(R(r_1,r_2)))=3$. Therefore $v_5(y)=0$.
   
   If $v_3(y)\geq 1$, then Lemma \ref{Rr1r2lemmaMain2} ensures that $y\leq R(r_1,r_2)$. If $v_3(y)=0$, then Lemma \ref{Rr1r2lemmaMain1} gives $y\leq R(r_1,r_2)$. 
   Therefore $R(r_1,r_2)\in N_2$ in any case.
\end{proof}

\begin{remark}\label{N2question}
From Proposition \ref{sparse_totient_property}\ref{sparse_totient_property2}, we get that any element in $N_1$, all of whose prime factors are less than some prime $p$, has bounded exponents
for its prime factors. But, as seen above from Proposition \ref{RinN2}, this is not the case for elements in $N_2$. Infact,  
$R(r_1, r_2) \in N_2$ for $r_1, r_2 \in \mathbb{N},\ r_2>2.$

So, this raises the following question:
For a given odd prime $p$, do there exist non-negative integers $d_q$ corresponding to each odd prime $q < p$ such that  
$$r_q>d_q \text{ for each } q<p \Rightarrow 2\prod_{2<q < p} q^{r_q} \in N_2?$$
The numbers $R(r_1, r_2)$ and $K_{3,r}$ answer this question in the affirmative for $p=7$ and $p=5$ respectively.
\end{remark}

Now we are going to give another infinite family of elements in $N_2$ in which the odd prime factors of elements are
Fermat primes.
\begin{lemma}\label{N2_fermat_lemma_one}
 If $\phi(x)=2^r$ for some $x,r\in\mathbb{N}$, then there exist $b,n\in \mathbb{N}\cup \{0\}$ and a sequence of distinct Fermat primes 
 $(F_{i_j})^{j=n}_{j=1}$ such that
 $$x=2^b\prod_{j=1}^{n}F_{i_j}.$$
\end{lemma}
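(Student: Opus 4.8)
The plan is to analyze the structure of $x$ directly from the factorization constraint $\phi(x) = 2^r$. First I would write $x = 2^b \prod_{p} p^{a_p}$ over the distinct odd primes $p \mid x$, so that $\phi(x) = 2^{\max(b-1,0)} \prod_{p} p^{a_p - 1}(p-1)$. Since this must equal a power of $2$, each factor $p^{a_p-1}(p-1)$ must itself be a power of $2$; because $p$ is odd, this forces $a_p - 1 = 0$, i.e. $a_p = 1$, and $p - 1$ must be a power of $2$. Thus $x$ is squarefree away from the factor of $2$, and every odd prime dividing $x$ is of the form $2^s + 1$ for some $s \geq 1$.

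The key step is then the classical observation that a prime of the form $2^s + 1$ must have $s$ itself a power of $2$, hence is a Fermat prime. I would recall the standard argument: if $s$ has an odd divisor $t > 1$, write $s = tu$; then $2^u + 1 \mid (2^u)^t + 1 = 2^s + 1$ is a nontrivial factor (using that $x+1 \mid x^t+1$ for odd $t$), contradicting primality unless $2^u + 1 = 2^s + 1$, i.e. $t = 1$. So $s$ has no odd divisor exceeding $1$, meaning $s = 2^l$ for some $l \geq 0$, and $2^{2^l}+1$ is by definition a Fermat prime $F_{i}$ for the appropriate index $i$. Collecting the odd prime factors as a finite set of distinct Fermat primes $F_{i_1}, \dots, F_{i_n}$ (distinct because $x$ is squarefree in its odd part), we obtain $x = 2^b \prod_{j=1}^n F_{i_j}$ with $b, n \in \mathbb{N} \cup \{0\}$, which is exactly the claimed form. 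The degenerate cases $x = 1$ (take $b = n = 0$) and $x = 2^b$ with no odd factors (take $n = 0$) are handled automatically by the empty-product convention.

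I do not expect a serious obstacle here; the proof is essentially bookkeeping on the multiplicativity of $\phi$ together with the elementary Fermat-prime characterization. The only point requiring a little care is the edge behavior of the $2$-adic part — that $\phi(2^b) = 2^{b-1}$ for $b \geq 1$ and $\phi(1) = \phi(2) = 1$ — but this does not affect the conclusion, since it only pins down $b$ loosely (any $b \geq 0$ is allowed), which matches the statement's assertion that $b$ merely lies in $\mathbb{N} \cup \{0\}$. So the argument proceeds cleanly: peel off the odd prime powers, force each to be a single Fermat prime, and reassemble.
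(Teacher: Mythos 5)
Your proof is correct and follows essentially the same route as the paper's: use multiplicativity of $\phi$ to force each odd prime factor of $x$ to appear to the first power with $p-1$ a power of $2$, then invoke the classical fact that a prime $2^s+1$ must be a Fermat prime. The only difference is that you spell out the elementary argument for that classical fact, whereas the paper simply cites Hardy--Wright.
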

\begin{proof}
 We observe that if $\phi(x)=2^r$ and if an odd prime $q \mid x$, then $(q-1) \mid 2^r$ which implies that $q$ is of the form $2^l + 1$ for some $l \in \mathbb{N}$.
But it is well-known that if $2^l + 1$ is a prime, then $l=2^{\alpha}$ for some $\alpha \geq 0$ (see \cite[Theorem 17]{hardy}).
Hence, $q=2^{2^{\alpha}}+1$, a Fermat prime. Also, $v_{q}(x)=1$  for each such $q \mid x$. If not, then $q \mid \phi(x)=2^r$, 
a contradiction since $q$ is odd. Therefore $x$ will be of the form
 $x=2^b\prod_{j=1}^{n}F_{i_j}$   where  $i_j\in \mathbb{N}, \ b, n\in \mathbb{N}\cup\{0\}.$

\end{proof}

\begin{proposition}\label{N2_fermat}
Let $F_j$ denote the $j$th Fermat prime for $j \in \mathbb{N}$. Suppose $F_1$, $F_2$, $\dots$, $F_k$ exist. 
If $F_{k + 1}$ also exists, then $2^aF\in N_2$ where $F=\prod_{i=1}^{k}F_i$ and $1\leq a \leq \log_2(F_{k+1}-1)$. 
If $F_{k+1}$ does not exist, then $2^aF\in N_2$ for each $a\in\mathbb{N}.$
\end{proposition}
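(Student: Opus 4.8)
The plan is to show that $2^aF$ is the largest element of $\phi^{-1}(m)$, where $m = \phi(2^aF) = 2^{a-1}\prod_{i=1}^{k}(F_i - 1)$. Note that each $F_i - 1 = 2^{2^{i-1}}$ is a power of $2$, so $m = 2^s$ for some $s \in \mathbb{N}$; explicitly $s = (a-1) + \sum_{i=1}^{k} 2^{i-1} = a - 1 + (2^k - 1)$. The key structural input is Lemma \ref{N2_fermat_lemma_one}: since $m$ is a power of $2$, every $y \in \phi^{-1}(m)$ has the form $y = 2^b \prod_{j=1}^{n} F_{i_j}$ with the $F_{i_j}$ distinct Fermat primes. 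So the problem reduces to a finite combinatorial optimization: among all $y$ of this shape with $\phi(y) = 2^s$, maximize $y$.

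First I would set up the constraint. Writing $y = 2^b \prod_{j=1}^{n} F_{i_j}$ with $i_1 < \cdots < i_n$, we have $\phi(y) = 2^{b'} \prod_{j=1}^{n}(F_{i_j} - 1) = 2^{b'} \cdot 2^{\sum_j 2^{i_j - 1}}$, where $b' = b - 1$ if $b \geq 1$ and $b' = 0$ if $b = 0$. Matching the exponent of $2$ with $s = a - 1 + (2^k - 1)$ gives
\begin{equation*}
\max(b-1,0) + \sum_{j=1}^{n} 2^{i_j - 1} = a - 1 + \sum_{i=1}^{k} 2^{i-1}.
\end{equation*}
The goal is to maximize $\log_2 y = b + \sum_{j=1}^{n} \log_2 F_{i_j}$ subject to this. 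Since $\log_2 F_{i_j} > 2^{i_j - 1}$ (as $F_{i_j} = 2^{2^{i_j-1}} + 1 > 2^{2^{i_j-1}}$) but only barely so, the intuition is that it is most efficient to "spend" the exponent budget $2^{i_j-1}$ on Fermat primes rather than on the factor $2^b$, and among Fermat primes, to use as many distinct ones as possible. The candidate maximizer is $y = 2^a F$ with $a \geq 1$: its budget usage is exactly $\max(a-1,0) + \sum_{i=1}^{k} 2^{i-1} = (a-1) + (2^k-1) = s$ (valid since $a \geq 1$), and it uses all available Fermat primes $F_1, \dots, F_k$.

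The main steps of the comparison: given any competitor $y = 2^b \prod_{j} F_{i_j}$ with $\phi(y) = 2^s$, I would argue $y \leq 2^a F$ by showing $\log_2 y \leq a + \sum_{i=1}^k \log_2 F_i$. The essential inequality is that $\log_2 F_i - 2^{i-1} = \log_2(1 + 2^{-2^{i-1}})$ is a \emph{decreasing} function of $i$ and is smaller than $1$ (the "gain" from using the unit of budget as $2^b$); more precisely, replacing a unit of the $2^b$-budget, or equivalently reallocating budget, one checks that the "value per unit budget" $\log_2 F_i / 2^{i-1}$ is maximized at $i = 1$ and exceeds $1$, so one wants to load the budget onto the smallest-indexed Fermat primes available. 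Because $y$ is constrained to use \emph{distinct} Fermat primes and the budget for using $F_i$ is the fixed amount $2^{i-1}$, the best a competitor can do is exactly to use $F_1, \dots, F_k$ and put the entire remaining budget $s - (2^k - 1) = a - 1$ into $2^{b-1}$, i.e. $b = a$; this is precisely $2^a F$. The hypothesis $a \leq \log_2(F_{k+1} - 1) = 2^k$ enters to guarantee that a competitor cannot instead "buy" the next Fermat prime $F_{k+1}$: doing so would cost budget $2^k$, and we must have enough budget, i.e. we'd need the total exponent $s = a - 1 + 2^k - 1 \geq 2^k$, i.e. $a \geq 2$; but even when $a \geq 2$, trading the factor $2^{a}$ (contributing $a$ to $\log_2 y$) plus budget $a - 1 \geq 2^k - 1$... here one must check that $F_{k+1} < 2^{a}$ whenever the trade is feasible, which follows from $a \leq 2^k$ giving $2^a \leq 2^{2^k} < 2^{2^k} + 1 = F_{k+1}$ — wait, that's the wrong direction, so one argues instead that using $F_{k+1}$ forces dropping enough powers of $2$ that the product strictly decreases. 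When $F_{k+1}$ does not exist, this obstruction is vacuous and $a$ is unrestricted, giving the second assertion.

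The hard part will be step three: making the budget-reallocation argument rigorous, i.e. proving cleanly that no combination of distinct Fermat primes plus a power of $2$ with $\phi$-value $2^s$ can exceed $2^aF$, and in particular handling the exchange with $F_{k+1}$ correctly (tracking both the loss of $2$-powers and the gain of $F_{k+1}$, and seeing where $a \leq 2^k$ is exactly what is needed). I expect this to reduce, after taking $\log_2$, to a one-line inequality comparing $\log_2(F_{k+1}) + (\text{leftover budget as power of }2)$ against $a + \log_2 F$, which should follow from $\log_2 F_{k+1} < 1 + 2^k$ together with the distinctness constraint; but the bookkeeping between the "$b=0$" and "$b \geq 1$" cases for $\phi$ (the $\max(b-1,0)$ term) needs care. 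Everything else is routine given Lemma \ref{N2_fermat_lemma_one} and Lemma \ref{Xnpmainlemma2}.
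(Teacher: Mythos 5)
Your setup coincides with the paper's: by Lemma \ref{N2_fermat_lemma_one} every preimage of $m=2^{s}$ is $2^{b}\prod_{j}F_{i_j}$, and your observation that the ``gain'' $\log_2 F_i-2^{i-1}=\log_2\bigl(1+2^{-2^{i-1}}\bigr)$ is decreasing in $i$ is exactly the paper's $D(A,B)\geq 1$ machinery (Lemmas \ref{Xnplemma_D(A,B)1} and \ref{Xnpmainlemma2}) written in logarithmic form. But there is a genuine gap: the decisive step --- that no admissible $2^{b}\prod_j F_{i_j}$ exceeds $2^{a}F$ when $a\leq\log_2(F_{k+1}-1)$ --- is never actually proved. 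You defer it explicitly (``the hard part will be step three''), and the one concrete attempt you make, trying to show $F_{k+1}<2^{a}$, goes in the wrong direction, as you yourself note. Your framing of where the hypothesis on $a$ enters is also slightly off: it is not needed to forbid a competitor from containing $F_{k+1}$ at all (a competitor may contain $F_{k+1}$ while dropping smaller Fermat primes, and such competitors are still dominated), but to bound the \emph{number} of odd prime factors of a competitor.

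The clean way to close the gap, which is what the paper does, is a counting argument rather than an exchange argument. First reduce to even competitors: if $x$ is odd with $\phi(x)=m$ then $\phi(2x)=m$ and $2x>x$, so the maximum of $\phi^{-1}(m)$ is even and you may take $b\geq 1$; this also removes the $\max(b-1,0)$ bookkeeping you were worried about. Then your budget identity reads $a=b+\sum_j\log_2(F_{i_j}-1)-\sum_{i=1}^{k}\log_2(F_i-1)$, and if the competitor had \emph{more} prime factors than $2^{a}F$, the middle difference would be at least $\log_2(F_{k+1}-1)=2^{k}$, forcing $a\geq b+2^{k}\geq 1+\log_2(F_{k+1}-1)$, contrary to hypothesis. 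Hence $|W(x)|\leq |W(2^{a}F)|$; since every Fermat prime outside $\{F_1,\dots,F_k\}$ exceeds $F_k$, Lemma \ref{Xnplemma_D(A,B)1} gives $D(W(x),W(2^{a}F))\geq 1$ and Lemma \ref{Xnpmainlemma2} gives $x\leq 2^{a}F$ (in your language: once the competitor uses at most $k$ Fermat primes, $\sum_j\epsilon_{i_j}\leq\sum_{i\leq k}\epsilon_i$ because the $\epsilon_i$ decrease, and with $b\geq1$ the two sides spend the same power-of-two budget). The case where $F_{k+1}$ does not exist is then immediate since every competitor's odd part divides $F_1\cdots F_k$. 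So your plan is salvageable and in fact converges to the paper's proof, but as submitted the central inequality is asserted, not established.
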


\begin{proof}
Define $y:=2^a\prod_{i=1}^{k}F_i$ with $a\in \mathbb{N}$.  To prove $y\in N_2$, it is enough to show that if $x$ is  any even integer satisfying $\phi(y)=\phi(x)$
then $x\leq y$. This can be observed using the fact that elements of $N_2$ are even.

Let $x$ be an even integer satisfying $\phi(x)=\phi(y)$. Since $\phi(y)= 2^r$ for some $r\in \mathbb{N}$, it follows that
$\phi(x)=2^r$ for some $r\in \mathbb{N}$. Then  Lemma \ref{N2_fermat_lemma_one} gives 
$x=2^b\prod_{j=1}^{n}F_{i_j}$ for some $b \in \mathbb{N}$ and $n\in \mathbb{N}\cup\{0\}$. Since $\phi(x)=\phi(y)$, 
we have
    \begin{equation}a=b+\sum_{j=1}^{n}\log_2(F_{i_j}-1)-\sum_{j=1}^{k}\log_2(F_j-1).\label{N2_fermat_eq1}\end{equation}
 If $F_{k+1}$ exists, then 
 \begin{align*}
|W(x)|> |W(y)|&\Rightarrow \sum_{j=1}^{n}\log_2(F_{i_j}-1)-\sum_{j=1}^{k}\log_2(F_j-1)\geq \log_2(F_{k+1}-1)\\
  &\Rightarrow a\geq b+ \log_2(F_{k+1}-1) \text{ by equation } \eqref{N2_fermat_eq1} \\
  &\Rightarrow a\geq 1+ \log_2(F_{k+1}-1) \text{ since } b\in\mathbb{N}.
 \end{align*}
 Therefore \begin{align*}
a\leq \log_2(F_{k+1}-1)&\Rightarrow |W(x)|\leq |W(y)|\\%
&\Rightarrow D(W(x), W(y))\geq 1 \text{ using Lemma } \ref{Xnplemma_D(A,B)1}\\
&\Rightarrow  x\leq y \text{ by Lemma } \ref{Xnpmainlemma2}.
           \end{align*}
On the other hand, if $F_{k+1}$ doesn't exist, then
 $W(x)\subset W(y)$ for each $a\in\mathbb{N}$. It follows that  $D(W(x), W(y))\geq 1$  using Lemma \ref{Xnplemma_D(A,B)1}. Then Lemma \ref{Xnpmainlemma2} gives  $x\leq y$. 

\end{proof}

Only five Fermat primes are known to date. From the above proposition, one can see that there exist elements in $N_2\setminus N_1$ which
are divisible by arbitrarily large powers of $2$. In all the earlier results, the elements obtained were divisible by 2 but not by 4.
\begin{corollary} 
For a positive integer $r$, there exist infinitely many integers $l$ such that $l \equiv 0 \pmod{2^r}$ and $l \in N_2\setminus N_1.$
\end{corollary}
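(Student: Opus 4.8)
The plan is to deduce this corollary directly from Proposition \ref{N2_fermat}, using only the five known Fermat primes $F_1 = 3, \dots, F_5 = 65537$. The key observation is that Proposition \ref{N2_fermat} already produces, for each admissible exponent $a$, an element $2^a F \in N_2$ where $F = \prod_{i=1}^k F_i$ for any $k \in H$; the only issue is whether $a$ can be taken arbitrarily large (so that $2^a$ is divisible by $2^r$ for a prescribed $r$) and whether infinitely many such elements arise.

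First I would take $k = 5$, so $F = F_1 F_2 F_3 F_4 F_5 = 3 \cdot 5 \cdot 17 \cdot 257 \cdot 65537$. Since $F_6$ is not known to exist, we are in the second case of Proposition \ref{N2_fermat}: for \emph{every} $a \in \mathbb{N}$, the integer $2^a F$ lies in $N_2$. Thus, given $r \in \mathbb{N}$, the integers $\{2^a F : a \geq r\}$ form an infinite family, each divisible by $2^r$ and each lying in $N_2$. It remains only to check that all but finitely many of these are \emph{not} in $N_1$. For this I would invoke Proposition \ref{sparse_totient_property}\ref{sparse_totient_property2}: for the prime $p = 3$, there exists $m(3)$ such that every sparsely totient number $n$ with $\phi(n) \geq m(3)$ is divisible by $3$ — but more to the point, the cited property (together with Proposition \ref{sparse_totient_property}\ref{sparse_totient_property1} on ratios, or directly the structure in Proposition \ref{sparse_totient_element}) forces elements of $N_1$ to have bounded exponents on their small prime factors; in particular, by Remark \ref{N2question}, any element of $N_1$ whose prime factors all lie below $7$ has bounded $2$-adic valuation. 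Since $2^a F$ has all prime factors below $65537$ but, once $a$ exceeds the relevant bound, too large a power of $2$, only finitely many $2^a F$ can belong to $N_1$.

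Actually, the cleanest route avoids re-deriving the exponent bound: one can note that $F$ itself is fixed, so $\phi(2^a F) = 2^{a-1} \phi(F)$ grows, and apply Proposition \ref{sparse_totient_property}\ref{sparse_totient_property2} with a prime $p \nmid F$, say $p = 7$: there is $m_0(7)$ so that $N_1(m) \equiv 0 \pmod 7$ for all $m \geq m_0(7)$. For $a$ large enough, $\phi(2^a F) \geq m_0(7)$, yet $2^a F$ is not divisible by $7$, so $2^a F$ cannot equal $N_1(\phi(2^a F))$, i.e. $2^a F \notin N_1$. Hence for each fixed $r$, the set $\{2^a F : a \geq \max(r, a_0)\}$, where $a_0$ is chosen so that $\phi(2^{a_0} F) \geq m_0(7)$, is an infinite subset of $N_2 \setminus N_1$ with every element divisible by $2^r$, which is exactly the assertion.

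The main obstacle — such as it is — is purely bookkeeping: confirming that the ``$F_{k+1}$ does not exist'' branch of Proposition \ref{N2_fermat} is legitimately available (it rests on the genuinely open status of $F_6$, which the excerpt explicitly allows us to use), and making sure the non-membership in $N_1$ argument is quantitative enough to survive for all sufficiently large $a$ rather than just infinitely many. Both are handled by the single clean divisibility obstruction via a prime not dividing $F$. No new estimates are needed; the corollary is essentially a specialization of results already in hand.
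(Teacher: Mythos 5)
Your overall strategy --- specialize Proposition \ref{N2_fermat} to produce elements $2^aF$ of $N_2$ with $a$ arbitrarily large, then rule out membership in $N_1$ via Proposition \ref{sparse_totient_property}\ref{sparse_totient_property2} applied to a prime (such as $7$) dividing no member of the family --- is exactly the paper's route, and your $7$-divisibility obstruction is sound, up to a small precision: $2^aF\in N_1$ means $2^aF=N_1(m)$ for \emph{some} $m\in V$, not necessarily $m=\phi(2^aF)$; but any such $m$ satisfies $m\geq\phi(2^aF)\geq m_0(7)$, so the same obstruction applies and only finitely many members of the family can lie in $N_1$.

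The genuine gap is in how you enter the second branch of Proposition \ref{N2_fermat}. That branch requires that $F_{k+1}$ actually does not exist; it is a mathematical hypothesis, not a statement about current knowledge. Taking $k=5$ and arguing ``since $F_6$ is not known to exist, we are in the second case'' conflates the open status of $F_6$ with its non-existence: if $F_6$ does exist, the proposition only guarantees $2^aF\in N_2$ for $1\leq a\leq\log_2(F_6-1)$, a finite (if large) range, and your construction then yields neither arbitrary $r$ nor infinitely many $l$. The corollary is nevertheless unconditional, and the repair is a dichotomy over $H$: if there are only finitely many Fermat primes, apply the second branch with $k=\max H$, so $2^a\prod_{i\leq k}F_i\in N_2$ for every $a\in\mathbb{N}$; if there are infinitely many, then $\log_2(F_{k+1}-1)\rightarrow\infty$ as $k\rightarrow\infty$, so for each fixed $r$ and all sufficiently large $k\in H$ the first branch already gives $2^{r}\prod_{i\leq k}F_i\in N_2$, and distinct $k$ give distinct elements. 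In either case one gets, for each $r$, infinitely many elements of $N_2$ divisible by $2^r$, after which your argument with $p=7$ discards the at most finitely many that could belong to $N_1$.
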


\begin{definition}
Let $F_j$ denote the $j$th Fermat prime for $j \in \mathbb{N}$ and let $H=\left\{k \in \mathbb{N}\colon F_k \text{ exists}\right\}.$ 
Define
 \begin{align*}
  \mathcal{K}_{\max}&=\{K_{q,r}\colon q\equiv 3\pmod{4} , r\in\mathbb{N}\},  \
  \mathcal{K}_{\min}=\{k_{q,r}\colon q\equiv 3\pmod{4} , r\in\mathbb{N}\},\\
  \mathcal{R}&=\{R(r_1,r_2)\colon r_1, r_2\in\mathbb{N}, r_2>2\},\\
  \mathcal{F}&=\left\{2^a\prod_{i=1}^{k}F_i \colon k\in H; a\leq \log_2(F_{k+1}-1) \text{ if $F_{k+1}$  exists and } a\in\mathbb{N} \text{ otherwise} \right\}.
 \end{align*}
\end{definition}

By collecting Propositions \ref{N2N3points}, \ref{RinN2} and \ref{N2_fermat}, we get that (i)  $\mathcal{K}_{\max}, \mathcal{R}$ and $\mathcal{F}$ are infinite subsets of $N_2$ and (ii) $\mathcal{K}_{\min}$ is an infinite subset of $N_3$ in which infinitely many elements are composite.
Proposition \ref{sparse_totient_property}\ref{sparse_totient_property2} gives that only finitely many elements of $\mathcal{K}_{\max}, \mathcal{R}$ and $\mathcal{F}$ belong to $N_1$. %
{\renewcommand{\thetheorem}{\ref{t3}}
\begin{theorem}
$\mathcal{K}_{\max}, \mathcal{R}$ and $\mathcal{F}$ are infinite subsets of $N_2$ in which only finitely many elements are in $N_1$. 
$\mathcal{K}_{\min}$ is an infinite subset of $N_3$ in which infinitely many elements are composite.
\end{theorem}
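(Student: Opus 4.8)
The plan is to assemble Theorem \ref{t3} directly from the three propositions already established, so the proof is essentially bookkeeping over the four families. First I would treat $\mathcal{K}_{\max}$ and $\mathcal{K}_{\min}$ simultaneously via Proposition \ref{N2N3points}: for each prime $q\equiv 3\pmod 4$ and each $r\in\mathbb{N}$ we have $N_2(q^r(q-1))=K_{q,r}$ and $N_3(q^r(q-1))=k_{q,r}$, so $\mathcal{K}_{\max}\subset N_2$ and $\mathcal{K}_{\min}\subset N_3$. Infinitude is clear since, e.g., for fixed $q$ the values $K_{q,r}=2q^{r+1}$ are strictly increasing in $r$. For $\mathcal{K}_{\min}$, to see that infinitely many elements are composite, I would invoke the remark after Proposition \ref{N2N3points}: taking $q=3$, the number $2\cdot 3^r+1$ is divisible by $5$ whenever $r\equiv 3\pmod 4$, so $k_{3,r}=3^{r+1}$ for infinitely many $r$, giving infinitely many composite elements of $\mathcal{K}_{\min}\subset N_3$.

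Next I would dispatch $\mathcal{R}$ and $\mathcal{F}$ by citing Propositions \ref{RinN2} and \ref{N2_fermat} respectively: $R(r_1,r_2)\in N_2$ for all $r_1,r_2\in\mathbb{N}$ with $r_2>2$, and $2^a\prod_{i=1}^k F_i\in N_2$ under the stated constraint on $a$ (namely $a\le\log_2(F_{k+1}-1)$ if $F_{k+1}$ exists, $a\in\mathbb{N}$ otherwise). Both families are infinite — $\mathcal{R}$ because $R(r_1,r_2)=2\cdot 3^{r_1}5^{r_2}$ is injective in $(r_1,r_2)$ and the allowed index set is infinite, and $\mathcal{F}$ because $H$ is infinite (indeed $H\supseteq\{1,2,3,4,5\}$ at minimum, and for any $k\in H$ with $F_{k+1}$ not known to exist — in particular $k=5$ — every $a\in\mathbb{N}$ is allowed, already yielding infinitely many elements).

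Finally, for the ``only finitely many in $N_1$'' claim I would argue uniformly using Proposition \ref{sparse_totient_property}\ref{sparse_totient_property2}. Each of $\mathcal{K}_{\max}$, $\mathcal{R}$, $\mathcal{F}$ splits into finitely many subfamilies whose members have all prime factors bounded by a fixed prime $p$: for $\mathcal{K}_{\max}$ with fixed $q$, all factors are $\le q$; for $\mathcal{R}$, all factors are $\le 5$; for $\mathcal{F}$ with fixed $k$, all factors are $\le F_k$. Proposition \ref{sparse_totient_property}\ref{sparse_totient_property2} says that for a prime $p$ there is $m(p)$ with $N_1(m)\equiv 0\pmod p$ for all $m\ge m(p)$; applying this to a prime $p$ exceeding the bound and not dividing the family member, any sufficiently large such member cannot lie in $N_1$ (it is not divisible by $p$, yet a large enough sparsely totient number would be). Hence each subfamily meets $N_1$ finitely; but I should note a subtlety: $\mathcal{K}_{\max}$ and $\mathcal{F}$ are infinite unions of these subfamilies (over $q$, resp. over $k$), so I must also rule out contributions from ``large $q$'' or ``large $k$.'' For $\mathcal{K}_{\max}$ this is already handled by Proposition \ref{N2N3points}(ii), which states $K_{q,r}\notin N_1$ except for $(q,r)=(3,1)$ — so in fact only one element of $\mathcal{K}_{\max}$ lies in $N_1$. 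For $\mathcal{F}$, the analogous finiteness across all $k$ is the one genuinely delicate point, and I would handle it by noting that an element $2^a\prod_{i=1}^k F_i$ with $k$ large has a large odd part of a very restricted shape (a product of the first $k$ Fermat primes), so that Lemma \ref{N1m_bound} giving $N_1(m)\sim e^\gamma m\log\log m$ forces, for large totients $m=\phi(2^a F)$, the sparsely totient number $N_1(m)$ to strictly exceed $2^a F$; thus $2^a F\ne N_1(m)$ and $2^a F\notin N_1$. This last estimate — comparing $2^a\prod_{i=1}^k F_i$ with $e^\gamma\phi(2^a\prod F_i)\log\log(\phi(\cdots))$ — is the main obstacle, the rest being a routine collation of the preceding propositions.
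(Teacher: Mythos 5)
Your overall assembly is the same as the paper's: Theorem \ref{t3} is obtained by collating Propositions \ref{N2N3points}, \ref{RinN2} and \ref{N2_fermat} (with the observation that $2\cdot 3^r+1\equiv 0\pmod{5}$ for $r\equiv 3\pmod{4}$ supplying infinitely many composite $k_{3,r}\in N_3$), using Proposition \ref{N2N3points}(ii) for $\mathcal{K}_{\max}\cap N_1$ and Proposition \ref{sparse_totient_property}\ref{sparse_totient_property2} for the remaining $N_1$-finiteness. The one point where you diverge is precisely where you announce a ``genuinely delicate point'', namely the finiteness of $\mathcal{F}\cap N_1$, and this is in fact not delicate: what Proposition \ref{sparse_totient_property}\ref{sparse_totient_property2} requires is not that the prime factors of the family be bounded, but only that some \emph{fixed} prime divide no member of the family. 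Since $7$ is not a Fermat prime ($6$ is not a power of $2$), no element $2^a\prod_{i\leq k}F_i$ is divisible by $7$, whatever $k$ and $a$ are; taking $p=7$, every element of $N_1$ outside the finite set $\{N_1(m)\colon m<m(7)\}$ is divisible by $7$, so $\mathcal{F}\cap N_1$ is finite, uniformly in $k$. This is exactly how the paper argues (the same device with $p=7$ also covers $\mathcal{R}$, and $p\in\{3,5\}$ would cover $\mathcal{K}_{\max}$ without invoking Proposition \ref{N2N3points}(ii)).

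Your substitute route via Lemma \ref{N1m_bound} is salvageable but, as written, incomplete: you explicitly leave the key comparison as ``the main obstacle'' instead of proving it. To close it you would need (i) the observation that $n\in N_1$ forces $n=N_1(\phi(n))$ (if $n=N_1(m')$ then $\phi(n)\leq m'$, and any $x$ with $\phi(x)\leq\phi(n)\leq m'$ satisfies $x\leq n$), since ruling out $n=N_1(\phi(n))$ alone does not rule out $n=N_1(m')$ for some other $m'$; and (ii) a uniform bound such as $n\leq 4\phi(n)$ on $\mathcal{F}$, which follows from $n/\phi(n)=2\prod_{i\leq k}F_i/(F_i-1)\leq 2\prod_{j\geq 0}\left(1+2^{-2^{j}}\right)=4$; then $N_1(\phi(n))\sim e^{\gamma}\phi(n)\log\log\phi(n)>4\phi(n)\geq n$ for all but finitely many members. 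Separately, your infinitude argument for $\mathcal{F}$ is imprecise: it is not known that $H$ is infinite, and the definition allows all $a\in\mathbb{N}$ only when $F_{k+1}$ actually does not exist, not when it is merely unknown (so $k=5$ gives unbounded $a$ only conditionally). The correct statement is a dichotomy: either there are infinitely many Fermat primes, so infinitely many admissible $k$ (each with $a=1$ allowed), or there is a largest $F_K$, and then every $a\in\mathbb{N}$ is admissible for $k=K$; in either case $\mathcal{F}$ is infinite.
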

}
\subsection{Proof of Theorem \ref{t4}}
In the previous results, we looked at several families of elements in $N_2$ and $N_3$.  
Now, we would like to compare the values of $N_2(m)$ and $N_3(m)$. In the following proposition, we are going to give upper and lower bounds for $N_2(m)$ and $N_3(m)$ and also look at the ratio $N_2(m)/N_3(m)$. 
\begin{lemma} \label{N2N3ratio_(2)}

 Let $m$ be an odd integer. If $u$ is an odd integer satisfying $\phi(u)=4m$, then 
 $$u=\displaystyle \frac{(2z_1+1)(2z_2+1)}{z_1z_2}m,~\text{ or }~ \displaystyle \frac{(4z_3+1)}{z_3}m,$$
 where $z_1z_2\mid m$, $z_3\mid m$, $z_1\neq z_2$ and $2z_1+1, 2z_2+1, 4z_3+1$ are primes. Also $4m<u\leq 7m$.
\end{lemma}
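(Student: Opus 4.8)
The plan is to pin down the shape of $u$ by examining the $2$-adic valuation of $\phi(u)$, and then to read off the two displayed forms and the bounds from the resulting factorization of $u$.

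First I would observe that $u\neq 1$ (else $\phi(u)=1\neq 4m$), write $u=\prod_i p_i^{a_i}$ with the $p_i$ distinct odd primes and $a_i\geq 1$, and use $\phi(u)=\prod_i p_i^{a_i-1}(p_i-1)$ to get $v_2(\phi(u))=\sum_i v_2(p_i-1)$, since each factor $p_i^{a_i-1}$ is odd. As $m$ is odd we have $v_2(\phi(u))=v_2(4m)=2$, while each summand $v_2(p_i-1)$ is at least $1$ because $p_i$ is an odd prime. Hence either (a) $u=p^aq^b$ has exactly two distinct prime divisors, both $\equiv 3\pmod 4$, or (b) $u=p^a$ is a prime power with $p\equiv 5\pmod 8$.

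Next I would handle the two cases explicitly. In case (a), put $p-1=2z_1$ and $q-1=2z_2$; since $p,q\equiv 3\pmod 4$ the integers $z_1,z_2$ are odd, and $p\neq q$ forces $z_1\neq z_2$. From $\phi(u)=p^{a-1}q^{b-1}\cdot 4z_1z_2=4m$ one gets $p^{a-1}q^{b-1}z_1z_2=m$, so $z_1z_2\mid m$ and
\[
u=pq\cdot p^{a-1}q^{b-1}=(2z_1+1)(2z_2+1)\frac{m}{z_1z_2},
\]
which is the first form. In case (b), put $p-1=4z_3$ with $z_3$ odd; from $\phi(u)=p^{a-1}\cdot 4z_3=4m$ one gets $p^{a-1}z_3=m$, so $z_3\mid m$ and $u=p\cdot p^{a-1}=(4z_3+1)\,m/z_3$, the second form. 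The primality of $2z_1+1,\,2z_2+1,\,4z_3+1$ is automatic, since these are just $p$, $q$, $p$.

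Finally, for the bounds: $u>\phi(u)=4m$ because $u>1$. For the upper bound, in case (a) we have $u=\bigl(2+\tfrac{1}{z_1}\bigr)\bigl(2+\tfrac{1}{z_2}\bigr)m$ with $z_1,z_2$ distinct positive odd integers, a quantity maximized when $\{z_1,z_2\}=\{1,3\}$, giving $u\leq 3\cdot\tfrac{7}{3}\,m=7m$; in case (b) we have $u=\bigl(4+\tfrac{1}{z_3}\bigr)m\leq 5m$. Thus $4m<u\leq 7m$ in every case. I do not expect a genuine obstacle here: the only step needing care is the valuation count that restricts $u$ to one or two odd prime factors and fixes the residues $p\equiv 3\pmod 4$ or $p\equiv 5\pmod 8$, after which everything is direct substitution and an elementary extremal estimate.
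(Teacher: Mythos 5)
Your proof is correct and follows essentially the same route as the paper: both restrict $u$ via $v_2(\phi(u))=2$ to either two prime factors $\equiv 3\pmod 4$ or one prime power with prime $\equiv 5\pmod 8$, then read off the two forms and obtain $4m<u\leq 7m$ by minimizing $z_1,z_2,z_3$ (your valuation count is just a bit more explicit than the paper's "at most two prime factors" step).
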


\begin{proof}
 Any odd integer $u$ satisfying $\phi(u)=4m$ can have at most two prime factors. If $u$ has two distinct prime factors $q_1$ and $q_2$,
such that $q_1<q_2$ and $q_1, q_2 \equiv 3 \pmod{4}$ (since $v_2(\phi(u))=2$), then
 $$u\left(\frac{(q_1-1)(q_2-1)}{q_1q_2}\right)=4m, \text{ i.e., } u=\frac{4q_1q_2}{(q_1-1)(q_2-1)}m .$$
Since $u,q_1,q_2$ and $m$ are all odd, we have $q_1=2z_1+1, \ q_2=2z_2+1$ for some odd integers $z_1, z_2$, with \ $z_1 < z_2$. 
Moreover, $q_1q_2\mid u \Rightarrow (q_1-1)(q_2-1) \mid \phi(u)=4m$, i.e., $z_1z_2 \mid m$. Using this in the value of $u$, we have
 $$u=\displaystyle \frac{(2z_1+1)(2z_2+1)}{z_1z_2}m,$$ where $z_1z_2$ divides $m$.
Clearly $u>4m$ and it can take a maximum value of $7m$ if $z_1 = 1, z_2 = 3$. Therefore $u\leq 7m$.
 If $u$ has only one prime factor $q$ with $q \equiv 5 \pmod{8}$ (since $v_2(\phi(u))=2$), 
 then $$u=\frac{4qm}{q-1}.$$ Now, $q \equiv 5 \pmod{8}$, so $q=4z_3+1$ for some odd integer 
 $z_3$. Therefore
 $$u=\frac{(4z_3+1)}{z_3}m.$$
 Note that $z_3,4z_3 + 1$ are co-prime integers and hence $z_3 \mid m$. Clearly $u>4m$ and 
 it can take a maximum value of $5m$ if $z_3 = 1.$
\end{proof}

Now we proceed to prove Theorem \ref{t4}\ref{t41} and \ref{t4}\ref{t42}.
\begin{proof}[Proof of Theorem \ref{t4}\ref{t41}]
 If $m\equiv 2\pmod{4}$ and $\phi(x)=m$ is solvable, then by Lemma \ref{klee2mod4}, $m=q^{r}(q-1)$ for some $q\equiv 3 \pmod{4}$ . 
 Since $q^r(q-1)+1\leq q^{r+1}\leq \frac{3m}{2}$, we have $m<N_3(m)\leq \frac{3m}{2}$ and
 $2m<N_2(m) \leq 3m$ by Proposition \ref{N2N3points}.
 
  We move onto the next case, i.e., $m \equiv 4 \pmod{8}$. Firstly, note that the proposition is true for $m=4$ since $N_3(4)=5$
  and $N_2(4)=12$. So, we can assume that $m \geq 12$. If $\phi(x)=m$ for $m \equiv 4 \pmod{8}$, then
  $v_2(x)\leq 2$.
  
  Suppose that there exists an integer $z$ such that $v_2(z)=2$ and $\phi(z)=m$ where $m=4m_0, \ m_0$ being an 
  odd integer. If $z=4y$, then $y$ is an odd integer satisfying $\phi(x)=2m_0$. Then by Lemma \ref{klee2mod4}, $y=p^{\alpha}$ for some $\alpha \geq 1$ and 
  prime $p \equiv 3\pmod{4}$. Therefore $z=4p^{\alpha}$. If $p>3$, then $3p^{\alpha}<z<6p^{\alpha}$ and $\phi(3p^{\alpha})=\phi(z)= \phi(6p^{\alpha})$. If $p=3$ and $\alpha>1$, then $ 7\times 3^{\alpha-1}<z< 14 \times 3^{\alpha-1}$ and again they have the same $\phi$ value. 
  Hence, if $\phi(z)=m, \ m \equiv 4 \pmod{8}$ and $v_2(z)=2$, then $z \notin \{N_2(m), N_3(m)\}$. Moreover, an odd integer
  $l \in \phi^{-1}(m) \text{ iff } 2l \in \phi^{-1}(m)$. Therefore, $v_2(N_3(m))=0$ and $v_2(N_2(m))=1$.
  Now, note that $N_3(m)$ and $\frac{N_2(m)}{2}$ are odd integers satisfying $\phi(x)=m=4m_1$ where $m_1$ is odd.
  Therefore, by Lemma \ref{N2N3ratio_(2)}, we have $m<N_3(m)$ and $ \frac{N_2(m)}{2} \leq\frac{7m}{4}$ and the result follows.  
\end{proof}
\begin{proof}[Proof of Theorem \ref{t4}\ref{t42}]
  By Lemma \ref{klee2mod4}, if $m \in V$, then $m \equiv 2 \pmod{4} \iff m=q^r(q-1)$ for some prime $q \equiv 3 \pmod{4}, \ r \geq 0$.
  Now, by Proposition \ref{N2N3points},  if $q^r(q-1)+1$ is composite, then 
   
   $$\frac{N_2(q^r(q-1))}{N_3(q^r(q-1))}=2.$$ 
  Else, if $q^r(q-1)+1$ is prime, then $$\frac{N_2(q^r(q-1))}{N_3(q^r(q-1))}=\frac{2q^{r+1}}{q^r(q-1)+1}=\frac{2q}{q-1+\frac{1}{q^r}}.$$

It is readily seen that the rightmost quantity lies between 2 and 3 since $r \geq 0, \ q \geq 3$.
\end{proof}
  
To prove Theorem \ref{t4}\ref{t43}, we give examples of infinite length geometric progressions in $N_2$ and $N_3$. For each prime $q \equiv 3 \pmod{4}$, note that 
$\{K_{q, r}\}_{r \in \mathbb{N}}$ is a geometric progression in $N_2$ with common ratio $q$. Also, we see that $\{R(r_0, r)\}_{r=3}^{\infty}$
is a geometric progression in $N_2$ with common ratio 5.

Now, we turn our attention to geometric progressions in $N_3$.  %
We construct an infinite geometric progression in $N_3$ with the help of the following lemma.

 \begin{lemma}\label{N3infgp}
Let $q$ be a prime satisfying $q \equiv 3 \text{ or } 7 \pmod{20}$. Then the set $\{r \colon q^{r}(q-1) + 1 \text{ is composite}\}$ 
contains an infinite arithmetic progression.
 \end{lemma}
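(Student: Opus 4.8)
The plan is to exhibit a single small prime---namely $5$---that divides $q^{r}(q-1)+1$ for every $r$ in a suitable residue class modulo $4$, and then to observe that for such $r$ the number $q^{r}(q-1)+1$ exceeds $5$ and is therefore composite. The role of the hypothesis $q\equiv 3$ or $7\pmod{20}$ is precisely to make reduction modulo $5$ effective: together with the standing assumption $q\equiv 3\pmod 4$, these are exactly the two classes modulo $20$ in which $q\equiv 3\pmod 5$ or $q\equiv 2\pmod 5$ (and in particular $q\neq 5$). In the complementary classes $q\equiv 1$ or $4\pmod 5$ one checks that $q^{r}(q-1)+1$ is never divisible by $5$, which explains why they are excluded.

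Concretely, I would split into the two cases. If $q\equiv 3\pmod{20}$, then $q\equiv 3\pmod 5$ and $q-1\equiv 2\pmod 5$; since $3$ has order $4$ modulo $5$, computing $q^{r}(q-1)\equiv 2\cdot 3^{r}\pmod 5$ for $r=1,2,3,4$ shows that $q^{r}(q-1)\equiv -1\pmod 5$ exactly when $r\equiv 3\pmod 4$. If $q\equiv 7\pmod{20}$, then $q\equiv 2\pmod 5$ and $q-1\equiv 1\pmod 5$, so $q^{r}(q-1)\equiv 2^{r}\pmod 5$, and a check over one full period gives $q^{r}(q-1)\equiv -1\pmod 5$ exactly when $r\equiv 2\pmod 4$. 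In either case we obtain an infinite arithmetic progression of exponents $r$, with common difference $4$, along which $5\mid q^{r}(q-1)+1$.

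It remains only to deduce compositeness, which is immediate: in both cases the smallest exponent in the progression is at least $2$, so $q^{r}(q-1)+1\geq q^{2}(q-1)+1\geq 3^{2}\cdot 2+1=19>5$, and a multiple of $5$ that is larger than $5$ cannot be prime. Hence the set $\{r\colon q^{r}(q-1)+1\text{ is composite}\}$ contains the infinite arithmetic progression just constructed. There is no real obstacle here---the substance of the lemma is the remark that the modulus $20$ in the hypothesis is exactly what makes the mod-$5$ argument work---so the only points requiring care are the bookkeeping: verifying the two periodic patterns modulo $5$ correctly, and confirming that the exponents produced are genuinely positive so that the elementary compositeness bound applies.
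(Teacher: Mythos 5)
Your proof is correct and follows exactly the paper's approach: in both residue classes $3$ and $7\pmod{20}$ one shows $5\mid q^{r}(q-1)+1$ along an arithmetic progression of exponents ($r\equiv 3\pmod 4$ and $r\equiv 2\pmod 4$, respectively), and the number exceeds $5$, hence is composite. Your write-up just makes explicit the mod-$5$ periodicity check and the size bound that the paper leaves implicit.
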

\begin{proof}
Suppose that $q \equiv 3 \pmod{20}$. We observe that $q^{r}(q-1) + 1$ is divisible by $5$ for $r \equiv 3 \pmod{4}$.
Now, if $q \equiv 7 \pmod{20}$, we see that $q^{r}(q-1) + 1$ is divisible by $5$ for $r \equiv 2 \pmod{4}$. So, in any case,
the set $\{r \colon q^{r}(q-1) + 1 \text{ is composite}\}$ contains an infinite arithmetic progression.
\end{proof}  
If $q \equiv 3 \text{ or } 7 \pmod{20}$, then $k_{q, r} \in N_3$ for each $r \in \mathbb{N}$. As the set $S=\{r \colon q^{r}(q-1) + 1 \text{ is composite}\}$ 
contains an infinite arithmetic progression, the set $\{k_{q, r} \colon r \in S\}$ contains an infinite geometric progression.
So corresponding to each such $q$, there is an infinite geometric progression. This implies an infinite family of such 
geometric progressions in $N_3$ due to the following result of Dirichlet\cite[Theorem 15, page 16]{hardy}:

 \begin{proposition}[Dirichlet's Theorem]\label{dirichlet}
 Suppose $(a, q) = 1$. Then there are infinitely many primes $p$ satisfying $p \equiv a \pmod{q}.$
 \end{proposition}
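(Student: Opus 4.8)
The final statement to prove is Proposition~\ref{dirichlet}, Dirichlet's Theorem on primes in arithmetic progressions: if $(a,q)=1$, then there are infinitely many primes $p\equiv a\pmod q$. Since this is a classical cited result (the excerpt references \cite[Theorem 15, page 16]{hardy}), the honest answer is that the paper itself does not prove it—it quotes it. So my "proof proposal" is really a sketch of how one \emph{would} prove it from scratch, which I present below.

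\textbf{Approach.} The plan is to use the analytic method of Dirichlet via $L$-functions, which is the standard route. First I would introduce the group of Dirichlet characters $\chi$ modulo $q$; these are the $\phi(q)$ homomorphisms from $(\mathbb{Z}/q\mathbb{Z})^\times$ to $\mathbb{C}^\times$, extended by zero to all integers. The key orthogonality relation is that $\frac{1}{\phi(q)}\sum_{\chi}\bar\chi(a)\chi(n)$ equals $1$ if $n\equiv a\pmod q$ and $0$ otherwise (for $(n,q)=1$). To each character one associates the Dirichlet $L$-function $L(s,\chi)=\sum_{n\ge 1}\chi(n)n^{-s}$, which for $\mathrm{Re}(s)>1$ has an Euler product $\prod_p(1-\chi(p)p^{-s})^{-1}$. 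Taking logarithms, $\log L(s,\chi)=\sum_p\sum_{m\ge 1}\frac{\chi(p)^m}{m\,p^{ms}}$, and the $m\ge 2$ terms contribute a function bounded as $s\to 1^+$. Hence $\sum_p\chi(p)p^{-s}=\log L(s,\chi)+O(1)$ near $s=1$.

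\textbf{Key steps in order.} (1) Combine the orthogonality relation with the logarithmic series to get
$$\sum_{\substack{p\equiv a\,(q)}}p^{-s}=\frac{1}{\phi(q)}\sum_{\chi}\bar\chi(a)\log L(s,\chi)+O(1)\qquad(s\to 1^+).$$
(2) For the principal character $\chi_0$, $L(s,\chi_0)$ differs from $\zeta(s)$ only by finitely many Euler factors, so $\log L(s,\chi_0)\to+\infty$ as $s\to 1^+$ (the pole of $\zeta$). (3) For each non-principal $\chi$, show $L(1,\chi)\ne 0$ and $L(s,\chi)$ is finite and nonzero at $s=1$, so $\log L(s,\chi)$ stays bounded as $s\to 1^+$. (4) Therefore the right-hand side $\to+\infty$, forcing $\sum_{p\equiv a\,(q)}p^{-s}$ to diverge as $s\to 1^+$, which is impossible if only finitely many such primes existed. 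This yields the conclusion.

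\textbf{Main obstacle.} The crux is step (3): the non-vanishing $L(1,\chi)\ne 0$ for non-principal $\chi$. For complex characters ($\chi\ne\bar\chi$) one can use the fact that $\prod_\chi L(s,\chi)$ has non-negative Dirichlet coefficients together with a clustering argument: if some $L(1,\chi)=0$ it would be cancelled by the pole from $\chi_0$, making the product analytic and bounded, contradicting divergence of the product's logarithm on the reals. The genuinely delicate case is a real non-principal character $\chi$ (a quadratic character), where the pairing trick fails; here one typically argues via the series $\sum_n \chi(n)/\sqrt n$ or via the non-negativity of coefficients of $\zeta(s)L(s,\chi)$ combined with a comparison at $s=1/2$, or via Landau's theorem on Dirichlet series with non-negative coefficients. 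In the context of the present paper, none of this machinery is reproduced; the statement is simply invoked as a known theorem, so in the actual text the ``proof'' consists of the citation to \cite{hardy}.
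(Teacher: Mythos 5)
Your proposal is right on the essential point: the paper gives no proof of Proposition~\ref{dirichlet} and simply invokes it with the citation to \cite{hardy}, exactly as you note in your closing remark. Your sketch of the classical $L$-function argument (orthogonality of characters, divergence from the principal character, and the non-vanishing $L(1,\chi)\neq 0$ as the crux) is a correct outline of the standard proof, but it is supplementary, since the paper's treatment is the citation alone.
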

\section{\textbf{Arithmetic progressions in sparse sets}}\label{section 4}
Szemer\'{e}di's Theorem assures the existence of arbitrarily long arithmetic progressions in a set having positive asymptotic 
density. But the converse isn't necessarily true. For example, 
the set of prime numbers has zero asymptotic density but contains arbitrarily long arithmetic progressions, as proved by Green and Tao\cite{green-tao}.
\begin{definition}
Let $A\subset\mathbb{P}$. Then, $Rd(A)=\limsup_{N\rightarrow \infty}\frac{|A\cap [1, N]|}{\pi(N)}$ defines
the relative density of $A$ with respect to $\mathbb{P}$, where $\pi(N)$ denotes the number of primes less than or equal to $N$.
\end{definition}

\begin{proposition}[Green-Tao\cite{green-tao}]\label{green_tao}
Let $A$ be any subset of the prime numbers of positive relative upper density $Rd(A)$.
Then $A$ contains infinitely many arithmetic progressions of length $k$ for all $k$.
\end{proposition}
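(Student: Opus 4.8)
The plan is to obtain Proposition~\ref{green_tao} from a \emph{relative Szemer\'{e}di theorem} combined with the sieve-theoretic construction of a pseudorandom majorant for the primes. First I would fix $k$ and use the hypothesis $Rd(A)>0$ to produce an infinite sequence $N_j\to\infty$ along which $|A\cap[1,N_j]|\ge\delta\,\pi(N_j)$ for some fixed $\delta>0$. To remove the local obstructions coming from small primes (the primes are badly equidistributed modulo $2,3,5,\dots$), I would apply the \textbf{$W$-trick}: set $W=\prod_{p\le w}p$ with $w=w(N_j)\to\infty$ slowly, and observe that by pigeonhole some residue class $b\pmod{W}$ with $(b,W)=1$ contains at least $\tfrac{\delta}{\phi(W)}\pi(N_j)\bigl(1+o(1)\bigr)$ elements of $A$. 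Rescaling $n\mapsto Wn+b$ transports the problem into $\mathbb{Z}_{N'}$ for a prime $N'\asymp N_j/W$, where the relevant arithmetic function now behaves, for the purpose of counting progressions, like a function of positive density on the integers.

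Next I would introduce the normalized cutoff $f=f_j\colon\mathbb{Z}_{N'}\to\mathbb{R}_{\ge 0}$ supported on the image of $A$, scaled so that $\mathbb{E}_{n\in\mathbb{Z}_{N'}}f(n)\ge\delta'>0$, and construct a \textbf{pseudorandom measure} $\nu\colon\mathbb{Z}_{N'}\to\mathbb{R}_{\ge 0}$ with $0\le f\le\nu$ and $\mathbb{E}\,\nu=1+o(1)$. The measure $\nu$ is built from a truncated divisor sum $\Lambda_R$ of Goldston--Yildirim type, and the crucial point is to verify that it satisfies the linear forms condition (with parameters such as $(k\cdot 2^{k-1},3k-4,k)$) and the correlation condition; these are precisely the prime-tuple estimates supplied by the Goldston--Yildirim sieve. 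With $\nu$ in hand, the \textbf{transference principle} applies: one decomposes $f=f_{\mathrm{str}}+f_{\mathrm{unf}}$, where $f_{\mathrm{str}}$ is a bounded function (a ``dense model'') with the same mean, to which ordinary Szemer\'{e}di's theorem yields $\gtrsim c(k,\delta')(N')^2$ many $k$-term progressions, while $f_{\mathrm{unf}}$ has small Gowers $U^{k-1}$-norm and hence, by the generalized von Neumann theorem (which uses only the linear forms condition on $\nu$), contributes negligibly to the weighted count of progressions $(x,x+d,\dots,x+(k-1)d)$. Undoing the rescaling and the $W$-trick gives $\gg_k N_j^2/(\log N_j)^k$ genuine $k$-term arithmetic progressions inside $A\cap[1,N_j]$, which is nonzero for $j$ large; letting $N_j\to\infty$ yields infinitely many, and since $k$ was arbitrary the proposition follows.

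The main obstacle is the construction and analysis of $\nu$: proving that the Goldston--Yildirim majorant is pseudorandom is the technical heart, as it requires precise asymptotics for sums $\sum_{n}\Lambda_R(\psi_1(n))\cdots\Lambda_R(\psi_m(n))$ over tuples of affine-linear forms $\psi_i$, and this is exactly where the $W$-trick becomes indispensable. A secondary difficulty is the transference step itself --- the energy-increment/structure theorem that produces the dense model $f_{\mathrm{str}}$, together with the generalized von Neumann inequality controlling progressions weighted by $\nu$ rather than by a bounded function. Both ingredients are due to Green and Tao~\cite{green-tao}; in the present paper one simply invokes the statement, but the architecture above records how it is obtained.
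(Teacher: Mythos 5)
This proposition is not proved in the paper at all: it is quoted verbatim from Green and Tao \cite{green-tao} and invoked as a black box in the proof of Theorem \ref{t5}. Your outline (the $W$-trick, the Goldston--Yildirim majorant verifying the linear forms and correlation conditions, transference to a bounded dense model, the generalized von Neumann theorem, and Szemer\'{e}di's theorem applied to the model) is a faithful summary of the architecture of the original Green--Tao argument, so there is nothing to fault beyond noting that none of this machinery is reproduced, or needed, in the present paper.
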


Let $f\colon V \rightarrow \mathbb{N}$ be such that $f(m)\in \phi^{-1}(m)$. By Corollary \ref{N_1_2_3_asymp_density}, $\overline d(f(V))=0$.
We observe that it satisfies the hypothesis of the following famous conjecture of Erd\H{o}s and Tur\'{a}n:

\begin{conjecture}[Erd\H{o}s-Tur\'{a}n]
 If $A\subset \mathbb{N}$ is such that $\sum_{n\in A}n^{-1}$ diverges, then $A$ contains arbitrarily long arithmetic progressions.
\end{conjecture}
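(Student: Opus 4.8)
The Erd\H{o}s--Tur\'{a}n conjecture is one of the central open problems of additive combinatorics, and no proof is known; I can only describe the natural line of attack and isolate the obstruction that blocks it. The strategy would be to extract from the arithmetic hypothesis $\sum_{n\in A}n^{-1}=\infty$ some substitute for the positive-density assumption in Szemer\'{e}di's theorem. The prototype is the proof of the prime case, Proposition~\ref{green_tao}: there one embeds the set into a pseudorandom majorant built from a truncated divisor sum, inside which the set has positive relative density, and then applies a \emph{relative} Szemer\'{e}di theorem via the Green--Tao transference principle (later formalized in work of Conlon, Fox and Zhao). So the plan for a general $A$ would be: (i) split $\mathbb{N}$ into dyadic blocks $[2^k,2^{k+1})$ and use the divergence of the reciprocal sum to select a sequence of blocks on which $A$ is not too sparse; (ii) on such a block, produce a pseudorandom set $S$ with $A\cap[2^k,2^{k+1})\subseteq S$ and $|A\cap[2^k,2^{k+1})|\gg|S|$; (iii) apply the transference principle to obtain, for each $\ell$, an $\ell$-term progression inside $A$ in infinitely many blocks.

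The main obstacle is step (ii), and it is fatal with present techniques: divergence of $\sum_{n\in A}n^{-1}$ imposes essentially no structure --- and in particular no pseudorandomness --- on $A$. Writing $a_k=|A\cap[2^k,2^{k+1})|$, the hypothesis is exactly $\sum_k a_k2^{-k}=\infty$, and this is perfectly compatible with $a_k2^{-k}\to 0$, i.e.\ with $A$ having within-block density tending to zero on \emph{every} scale. Thus one cannot even reduce to a fixed positive density along a subsequence of blocks, so Szemer\'{e}di's theorem does not apply directly; and, unlike the primes, there is no canonical pseudorandom envelope in which $A$ is known to sit with non-vanishing relative density --- the divergence can be carried by a union of very short intervals, or by a union of cosets of large moduli, which are as far from pseudorandom as possible. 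A genuine proof would seem to require either a strengthening of Szemer\'{e}di's theorem with a density threshold decaying much more slowly than anything currently available, uniformly across all scales, or an altogether new mechanism for manufacturing additive structure out of the reciprocal-sum hypothesis.

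I would not attempt the full conjecture here, and it is not needed for this paper: the sets $f(V)$ of Theorem~\ref{t5}, although they satisfy $\sum_{n\in f(V)}n^{-1}=\infty$ and so fall within the scope of the conjecture, can be shown \emph{unconditionally} to contain arbitrarily long arithmetic progressions by combining the Green--Tao theorem (Proposition~\ref{green_tao}) with Erd\H{o}s's theorem on totient pre-images of shifted primes \cite[Theorem~4]{erdos}. The conjecture is therefore recorded only to place these examples in context, not as a tool to be invoked.
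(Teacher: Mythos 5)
You are right not to attempt a proof: the Erd\H{o}s--Tur\'{a}n statement is a well-known open conjecture, and the paper likewise only records it to contextualize the sets $f(V)$, which satisfy its hypothesis. Your reading of its role matches the paper exactly --- Theorem \ref{t5} is established unconditionally via Proposition \ref{green_tao} together with Erd\H{o}s's result (Proposition \ref{erdos_totients}), not by invoking the conjecture.
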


\begin{proposition}
 Let $f\colon V \rightarrow \mathbb{N}$ be such that $f(m)\in \phi^{-1}(m)$. Then $\displaystyle \sum_{m\in V}\frac{1}{f(m)}$ diverges.
\end{proposition}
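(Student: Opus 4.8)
The plan is to show that $\sum_{m \in V} \frac{1}{f(m)}$ diverges by comparing it with a sum over all of $\mathbb{N}$ and subtracting off the contribution of the non-totients. Since $f(m) \in \phi^{-1}(m)$, the map $f$ is injective (distinct values of $m$ force distinct pre-images), so $f(V)$ is a subset of $\mathbb{N}$ and $\sum_{m \in V}\frac{1}{f(m)} = \sum_{n \in f(V)}\frac{1}{n}$. Thus it suffices to bound $\sum_{n \notin f(V)}\frac1n$ or, more crudely, to control the size of $f(V)$ from below via the size of its complement.

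First I would recall that every even number $n$ satisfies $\phi(n) = \phi(n)$ trivially, but the key structural fact is that $f(m) \geq N_3(m)$ and more importantly that $f(m)$ is bounded above in terms of $m$: by Lemma \ref{N1m_bound} (or just the elementary bound $\phi(n) \gg n/\log\log n$), we have $f(m) = O(m \log\log m)$ for every $m \in V$. Hence if we list $V = \{m_1 < m_2 < \cdots\}$, the values $f(m_j)$ satisfy $f(m_j) \ll m_j \log\log m_j$. Therefore
\begin{align*}
\sum_{m \in V} \frac{1}{f(m)} \;\gg\; \sum_{m \in V} \frac{1}{m \log\log m}.
\end{align*}
Now I would invoke the asymptotic count of totients: by Proposition \ref{Totient_estimates}, $V(x) = \frac{x}{\log x}\exp\{(C+o(1))(\log\log\log x)^2\}$, which in particular exceeds $\frac{x}{(\log x)^{1+\epsilon}}$ for every $\epsilon > 0$ and all large $x$ (since the $\exp$ factor grows faster than any power of $\log\log x$, hence certainly beats $(\log x)^{-\epsilon}$... wait, one must be careful here). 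Actually the cleaner route: partition the totients into dyadic blocks $V \cap (2^k, 2^{k+1}]$, each of which has size $\gg \frac{2^k}{k^{1+\epsilon}}$ by Ford's estimate, and on each block $\frac{1}{m\log\log m} \gg \frac{1}{2^k \cdot \log k}$, so the block contributes $\gg \frac{1}{k^{1+\epsilon}\log k}$. Summing over $k$ still diverges only if $\epsilon$ is small enough relative to the exponent — so I would instead use the full strength of Ford's estimate, which gives block size $\gg 2^k \cdot \frac{1}{k}\exp\{c(\log\log k)^2\}$, and then the $k$-th block contributes $\gg \frac{1}{k}\exp\{c(\log\log k)^2\}\cdot\frac{1}{\log k} \gg \frac1k$ for large $k$, so $\sum_k \frac1k$ diverges.

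The main obstacle is making the dyadic-block estimate airtight: one needs that Ford's formula, which is an asymptotic for $V(x)$, translates into a genuine lower bound for the number of totients in $(2^k, 2^{k+1}]$ that is comparable to $2^k/k$ up to the $\exp\{(\log\log\log)^2\}$ gain — i.e., one must check $V(2^{k+1}) - V(2^k) \asymp V(2^k)$, which follows since $V(x)/x$ is essentially monotone up to slowly varying factors (or one can bypass this by just using $V(2^{k+1}) \geq \tfrac12 \sum_{j \le k}(V(2^{j+1})-V(2^j))$ and a telescoping/averaging argument to extract infinitely many blocks with $\gg V(2^k)/k$ new totients). Once that is in hand, combining with $f(m) \ll m\log\log m$ gives $\sum_{m\in V}\frac{1}{f(m)} \gg \sum_{k} \frac1k = \infty$. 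I would then remark, as the paper does, that this verifies the hypothesis of the Erd\H{o}s--Tur\'an conjecture for $f(V)$, and that Theorem \ref{t5} follows instead unconditionally by combining Green--Tao (Proposition \ref{green_tao}) applied to the prime values $f(p-1) = p$ together with the Erd\H{o}s result cited in the introduction.
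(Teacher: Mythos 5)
Your route is genuinely different from the paper's and it can be made to work, but the step you yourself flag as the ``main obstacle'' is not resolved by the repairs you sketch. Your plan is: bound $f(m)\le N_1(m)\ll m\log\log m$ (Lemma \ref{N1m_bound}), then extract divergence of $\sum_{m\in V}\frac{1}{m\log\log m}$ from the totient-counting function. The worry about converting Ford's asymptotic for $V(x)$ into lower bounds for the individual block counts $V(2^{k+1})-V(2^k)$ is legitimate (an asymptotic alone does not control differences), but your proposed fixes don't settle it: ``$V(x)/x$ is essentially monotone'' is unsubstantiated, and $V(2^{k+1})\ge\frac12\sum_{j\le k}\bigl(V(2^{j+1})-V(2^j)\bigr)$ telescopes to a triviality. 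The clean repair is to avoid block differences altogether: summing by parts (Abel summation over the dyadic blocks, or over $[16,x]$) expresses the sum through the cumulative counts only, and Proposition \ref{Totient_estimates} gives $\frac{V(2^k)}{2^k\log\log 2^k}\ge\frac1k$ for large $k$, so the sum dominates $\sum_k\frac1k$. In fact Ford's theorem is overkill here: since $p-1\in V$ for every odd prime $p$, you have $V(x)\ge\pi(x+1)-1\gg x/\log x$, or even more directly $\sum_{m\in V}\frac{1}{f(m)}\gg\sum_p\frac{1}{p\log\log p}$, which diverges by Mertens' theorem and partial summation. The paper takes a different and more self-contained path: it restricts to $m\equiv 2\pmod 4$, uses its ratio bound $N_2(m)\le 3N_3(m)$ (Theorem \ref{t4}\ref{t42}) together with $N_3(p-1)=p$, and reduces everything to the divergence of $\sum_{p\equiv 3\pmod 4}\frac1p$; this needs neither Ford's estimate nor the asymptotic for $N_1(m)$ and yields a $\log\log$-type divergence, whereas your argument is more robust (it works for any selector from $\phi^{-1}$ over any subset of totients with counting function $\gg x/\log x$) but gives only a triple-logarithmic rate and, as written, leans on heavy asymptotics where a crude lower bound suffices. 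One small correction to your closing aside: the paper's proof of Theorem \ref{t5} applies Green--Tao to a positive-relative-density set of primes produced by Erd\H{o}s's result (Proposition \ref{erdos_totients}), not to values $f(p-1)$, which need not equal $p$ since $f$ is an arbitrary selector from $\phi^{-1}(p-1)$.
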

\begin{proof}
 We have $$\displaystyle \sum_{n\in N_2}\frac{1}{n}\leq\displaystyle \sum_{m\in V}\frac{1}{f(m)}\leq\displaystyle \sum_{n\in N_3}\frac{1}{n}.$$ 
 If $m\equiv 2\pmod{4}$, then  $N_2(m) \leq 3N_3(m)$ by Proposition  \ref{t4}\ref{t42}. So, it follows that 
 $$\displaystyle \sum_{m\in V}\frac{1}{N_2(m)} \geq \displaystyle \sum_{\substack{m\equiv 2 \pmod{4} \\ m \in V}}\frac{1}{3N_3(m)} \geq \displaystyle \sum_{p\equiv 3\pmod{4}}\frac{1}{3p},$$
 since $N_3$ contains each odd prime. The right most series is divergent (see \cite[Chapter 4]{davenport}) and the result follows.
 \end{proof}

 Hence we may expect arbitrarily long arithmetic progressions in $f(V)$. Indeed, the following result due to Erd\H{o}s\cite[p. 15]{erdos} and
 Proposition \ref{green_tao} above, together confirm our intuition.
\begin{proposition}[Erd\H{o}s, \cite{erdos}]\label{erdos_totients}
 Suppose $m \in V$ with $\left|\phi^{-1}(m)\right|=k$ for $k\geq 2$. Then there exists a set $P\subset \mathbb{P}$ such that 
 $Rd(P) > 0$ and for each $p\in P$, $\phi^{-1}(m(p-1))=p\phi^{-1}(m)$. 
 \end{proposition}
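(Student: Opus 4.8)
The plan is to exhibit an explicit, in fact cofinite, choice of $P$ and then verify the asserted set identity directly. I would take $P = \{p \in \mathbb{P} \colon p > \max \phi^{-1}(m)\}$. Since $\phi^{-1}(m)$ is a finite set, $P$ omits only finitely many primes, so $Rd(P) = 1 > 0$; hence the whole content of the proposition is the equality $\phi^{-1}(m(p-1)) = p\,\phi^{-1}(m)$ for every $p \in P$, and it suffices to prove the two inclusions.

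The inclusion $p\,\phi^{-1}(m) \subseteq \phi^{-1}(m(p-1))$ is immediate: for $x \in \phi^{-1}(m)$ we have $x \le \max\phi^{-1}(m) < p$, so $\gcd(p,x) = 1$ and multiplicativity of $\phi$ gives $\phi(px) = (p-1)\phi(x) = (p-1)m$. The reverse inclusion is where the work lies. Given $y$ with $\phi(y) = m(p-1)$, the key step — which I expect to be the main obstacle — is to show $p \mid y$. The idea is to argue by contradiction: if $p \nmid y$, write $\phi(y) = \prod_{q \mid y} q^{\,v_q(y)-1}(q-1)$; since $p$ divides the left-hand side but divides none of the prime powers $q^{\,v_q(y)-1}$ (each prime $q \mid y$ being distinct from $p$), we must have $p \mid q_0 - 1$ for some prime $q_0 \mid y$. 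But $q_0 - 1$ is one of the factors appearing in the product above, so $(q_0 - 1) \mid \phi(y) = m(p-1)$; together with $p \mid q_0 - 1$ and $\gcd(p, p-1) = 1$ this forces $p \mid m$, contradicting $m = \phi(x) < p$ for any $x \in \phi^{-1}(m)$.

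Once $p \mid y$ is known, the remainder is routine bookkeeping. Write $y = p^{b} z$ with $p \nmid z$ and $b \ge 1$. Then $p^{b-1}(p-1)\phi(z) = \phi(y) = m(p-1)$, hence $p^{b-1}\phi(z) = m$; since $p \nmid m$ we conclude $b = 1$ and $\phi(z) = m$, so $z \in \phi^{-1}(m)$ and $y = pz \in p\,\phi^{-1}(m)$. This establishes the reverse inclusion, hence the identity, and since multiplication by $p$ is injective it also yields $|\phi^{-1}(m(p-1))| = |\phi^{-1}(m)| = k$. The only genuinely nontrivial point in the argument is the divisibility step establishing $p \mid y$; every other step is a short consequence of the multiplicativity of $\phi$.
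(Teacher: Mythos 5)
Your strategy cannot work, because the statement you set out to prove---that $\phi^{-1}(m(p-1)) = p\,\phi^{-1}(m)$ for \emph{every} prime $p > \max\phi^{-1}(m)$---is false. Take $m=4$, so $\phi^{-1}(4)=\{5,8,10,12\}$, and $p=13$: then $m(p-1)=48$ and $\phi(105)=\phi(3)\phi(5)\phi(7)=2\cdot 4\cdot 6=48$, yet $13\nmid 105$, so $105\in\phi^{-1}(48)\setminus 13\,\phi^{-1}(4)$. (Even simpler: $m=1$, $p=3$ gives $4\in\phi^{-1}(2)\setminus 3\,\phi^{-1}(1)$.) The concrete error is in your divisibility step: you assert that $p$ divides the left-hand side $\phi(y)=m(p-1)$, but it does not---$p\nmid m$ because $m<p$, and $p\nmid p-1$---so the contradiction you derive from $p\mid q_0-1$ never arises and nothing forces $p\mid y$. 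What actually has to be excluded is the possibility that the factor $p-1$ of $\phi(y)$ is assembled from the contributions $q^{v_q(y)-1}(q-1)$ of several primes $q\mid y$, none of which is $p$ (exactly what happens with $y=105$ above), and this cannot be excluded for all large $p$.

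This is precisely why the proposition asserts the identity only for a set $P$ of primes of positive relative density, and why it is genuinely nontrivial: one must sieve away those primes $p$ for which $\phi(x)=m(p-1)$ acquires extra solutions coprime to $p$. The paper does not reprove the result; it quotes it from Erd\H{o}s \cite{erdos} (Theorem 4 there), whose proof rests on such a density/sieve argument rather than on pure multiplicativity. Your forward inclusion $p\,\phi^{-1}(m)\subseteq\phi^{-1}(m(p-1))$ for $p>\max\phi^{-1}(m)$ is correct, and the bookkeeping you do after the (unproved) step $p\mid y$ is also correct, but the heart of the theorem is the reverse inclusion for a suitably restricted set of primes, and that part is missing.
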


\begin{proof}[Proof of Theorem \ref{t5}]
Consider the set $V_1=\{m\in V\colon |\phi^{-1}(m)|=2\}$ and let $m'\in V_1$. Then, by Proposition \ref{erdos_totients},
there exists $P_1\subset \mathbb{P}$ such that $Rd(P_1)>0$ and $m'(p-1)\in V_1$ for each $p\in P_1$.
Now, consider the sets $P_2=\{p\in P_1\colon pN_2(m')\in f(V)\}$ and $P_3=\{p\in P_1\colon pN_3(m')\in f(V)\}$. 
By the definition of the set $f(V)$, we have $P_2\cap P_3=\varnothing$ and $P_2\cup P_3=P_1$. Therefore at least one of the sets
$P_2$ or $P_3$ has positive relative density in $\mathbb{P}$ and thus, by Proposition \ref{green_tao}, 
contains arbitrarily long arithmetic progressions.
Therefore one of the subsets $N_2(m')P_2$ or $N_2(m')P_3$ of $f(V)$ contains arbitrarily long arithmetic progressions and
the result follows. 
\end{proof}
\section{Questions}\label{section 5}
As discussed in Remark \ref{N2question}, we raise the following question about elements in $N_2$.
\begin{question}
 For a given odd prime $p$, do there exist non-negative integers $d_q$ for each prime $q < p$ such that 
$$ r_q>d_q \text{ for each prime} \ q<p \Rightarrow 2\prod_{2<q < p} q^{r_q} \in N_2.$$
\end{question}

In relation to densities of $N_2$ and $N_3$, we ask:
\begin{question}
What is the Banach density of $N_2$ and $N_3$? 
\end{question}
In fact, except for F\o{}lner sequences of the type $(a_n, a_n(1 + \alpha_n)]$, where $\alpha_n \rightarrow 0,\ a_n \rightarrow \infty$,
and $\alpha_n a_n \rightarrow \infty$, one can see that the upper density with respect to other F\o{}lner sequnces is zero.

\end{document}